\documentclass[11pt,english]{smfart}

\paperheight=260mm
\paperwidth=195mm

\textheight=205mm
\textwidth=155mm

\evensidemargin=15mm
\oddsidemargin=25mm
\hoffset=-25mm
\voffset=-20mm
\topmargin=10mm
\headsep=7mm
\footskip=14mm

\usepackage[latin1]{inputenc}
\usepackage[T1]{fontenc}
\usepackage[english,francais]{}
\usepackage{hyperref}
\usepackage{amssymb,amsmath,url,xspace,smfthm,euscript,enumerate}

\parindent 0cm




\setcounter{page}{1}


\def\notdiv{\nmid}

\def\too{\relbar\lien\rightarrow}
\def\tooo{\relbar\lien\relbar\lien\too}

\let\ds=\displaystyle
\def\lien{\mathrel{\mkern-4mu}}
\let\st=\scriptstyle

  \def\Q{\mathbb{Q}}
  \def\Z{\mathbb{Z}}
  \def\F{\mathbb{F}}

 \def\zz{\mathbb{Z}}

\def\virg{\raise 2pt \hbox{,}\,\,}

\def\No{{\rm N}}
\def\Sauf{\!\setminus\!}

\def\Cl{{\mathcal C}\hskip-2pt{\ell}}

\def\cl{c\hskip-1pt{\ell}}
\def\Pl{P\hskip-1pt{l}}

\def\Frac#1#2{\hbox{\footnotesize $\displaystyle \frac{#1}{#2}$}}
\def\plus{\ds\mathop{\raise 2.0pt \hbox{$\bigoplus $}}\limits}
\def\mult{\ds\mathop{\raise 2.0pt \hbox{$\bigotimes$}}\limits}
\def\prd{ \ds\mathop{\raise 2.0pt \hbox{$  \prod   $}}\limits}
\def\Cap{ \ds\mathop{\raise 2.0pt \hbox{$\bigcap   $}}\limits}
\def\Cup{ \ds\mathop{\raise 2.0pt \hbox{$\bigcup   $}}\limits}
\def\sm{  \ds\mathop{\raise 2.0pt \hbox{$  \sum    $}}\limits}

\def\fin{\vbox{\hrule\hbox to 7.2pt{\vrule height 7pt\hfil\vrule}\hrule}}
\def \tensorZp{\otimes{\raise -0.8pt \hbox{\!\!$_{_{\zz_{\!p}}}$}}}
\def \tensorZ{\otimes{\raise -0.8pt \hbox{\!\!$_{_{\zz}}$}}}

\def\Cup{\displaystyle \mathop{\raise 2.0pt \hbox{$\bigcup$}}\limits}
 
\def\wt{\widetilde}	\def\ov{\overline}  
\let\st=\scriptstyle

\def\fin{\vbox{\hrule\hbox to 7.2pt{\vrule height 7pt\hfil\vrule}\hrule}}


\theoremstyle{plain}

\title{Analysis of the classical cyclotomic approach \\ to fermat${}'$s last theorem}
\author{Georges Gras}

\address{Villa la Gardette, chemin Ch\^ateau Gagni\`ere,
F-38520 Le Bourg d'Oisans}
\email{g.mn.gras@wanadoo.fr}
\urladdr{http://monsite.orange.fr/maths.g.mn.gras/}


\newcounter{note}
 \newcommand{\zero}{\setcounter{note}{0}}
 \newcommand{\ntm}{\footnotemark\addtocounter{note}{1}}
 \newcommand{\initnt}{\addtocounter{footnote}{-\value{note}}}
 \newcommand{\ntt}[1]{\addtocounter{footnote}{1}
    \footnotetext{#1}}


\begin{document}
\newenvironment{nota}{\begin{enonce}{Notation}}{\end{enonce}}

\newenvironment{exa}{\begin{enonce}{Example}}{\end{enonce}}

\frontmatter

\date{March 14, 2010}

\thanks{The author thanks Christian Maire
for his interest and comments concerning this didactic paper, Roland Qu\^eme 
for an observation on Wieferich${}'$s criterion,
and the Referee for his valuable help and for the corrections 
of  english.}
\keywords{Fermat${\,}'\!$s last theorem, Class field theory, Cyclotomic fields,
Reflection theo\-rems, Radicals, Gauss sums}

\subjclass{11D41, 11R18, 11R37, 11R29}

\begin{abstract}
We give again the proof of several classical results concerning the 
cycloto\-mic approach to Fermat${\,}'$s last theorem
using {\it exclusively}\, class field theory (essentially the reflection theorems),
without any calculations.
The fact that this is possible suggests a part of the logical inefficiency of the 
historical investigations.

\noindent
We analyze the significance of the numerous computations
of the literature, to show how they are probably too local to get any proof of
the theorem. However
we use the derivation method of Eichler as a prerequisite for our 
purpose, a method which is also local but more effective.

\noindent
Then we propose some modest ways of study in a more diophantine context
using radicals; this point of view would require
further nonalgebraic investigations.
\end{abstract}

\begin{altabstract}
Nous redonnons la preuve de plusieurs r\'esultats classiques concernant
l'approche cyclotomique du th\'eor\`eme de Fermat
en utilisant {\it exclusivement} la th\'eorie du corps de classes
(notamment les th\'eor\`emes de r\'eflexion), sans aucun calcul.
Le fait que ceci soit possible sugg\`ere une part d'inefficacit\'e logique des
investigations historiques.

\noindent
Nous analysons la signification de nombreux calculs de
la litt\'erature, afin de montrer en quoi ils sont
probablement trop locaux pour donner une preuve du th\'eor\`eme.
Cependant
nous  utilisons la m\'ethode de d\'erivation d'Eichler
 comme pr\'ealable \`a notre
d\'emarche, m\'ethode aussi locale, mais plus effective.

\noindent
Ensuite, nous proposons quelques modestes voies d'\'etude,
dans un contexte plus diophantien,
utilisant des radicaux, point de vue qui n\'ecessiterait
d'\'etablir de nouvelles propri\'et\'es non alg\'ebriques.
\end{altabstract}

\maketitle

\mainmatter

\section*{Introduction and Generalities}

The classical approaches to Fermat${}'$s last 
theorem (FLT) are essentially of a $p$-adic nature in the $p$th cyclotomic 
field; thus these studies turn to be arithmetic modulo $p$, in 
which case the distinction between first and second case is
necessary but unnatural as Wiles${}'$s proof suggests.
Even if the starting point is of a global nature ($p$th powers of ideals, 
classes, units, logarithmic derivative of Eichler,\ldots), the conclusion of the study is 
mostly local (congruences modulo~$p$) as we can see for instance in 
Ribenboim and Washington${}'$s books [R, Wa].

\smallskip
We don${}'$t know (for instance in the first case of FLT) if $p$-adic 
investigations (Kummer${}'$s congruences, Mirimanoff or Thaine${}'$s congruences,
Wieferich or Wendt${}'$s criteria,\ldots) are able, from a logical point of view, to 
succeed in proving it. We think that probably not and we think that 
all these dramatically numerous necessary conditions can, in some 
sense, be satisfied in a very rare ``\,numerical setting\,'',  as for the 
question of
Vandiver${}'$s conjecture for which we have given a probabilistic study in [Gr1, 
 II.5.4.9.2]: the number of favourable primes less than $p$
 (for a counterexample) can be of the
form $c\,.\, {\rm log}({\rm log}(p))$, $c < 1$.

\smallskip
This is to be relativized with the result
of Soul\'e [S] showing (after that of Kurihara [Ku] for $n=3$)
that for odd $n$, the real components $\Cl_{\omega^{p-n}}$
of the $p$-class group\,\footnote{Standard definitions
with the character of Teichm\"uller $\omega$ and the
corresponding eigenspaces $\Cl_{\omega^i}$,
also denoted $\Cl^{(i)}$, $i=1,\ldots,p-1$; see 
Not.\,2.7, and Th.\,2.8, Subsec.\,2.3.}
are trivial for any large $p$.
This result and the well-known relative case indicate
that the probabilities are not uniform
in the following way: 

For small values of odd $n$, the real components
 $\Cl_{\omega^{p-n}}$ are trivial (deep result of [Ku, S])
 and for small values of even $m$, the relative components
 $\Cl_{\omega^{p-m}}$ are trivial (because of the evident
 nondivisibility by $p$ of the first
Bernoulli numbers $B_2, \ldots, B_{m_0}$); so that the real 
components $\Cl_{\omega^{p-3}}, \ldots ,\Cl_{\omega^{p-n_0}}$,
for a small odd $n_0$, and the relative components 
$\Cl_{\omega^{p-2}}, \ldots ,\Cl_{\omega^{p-m_0}}$,
for a small even $m_0$, are trivial,
which implies, by reflection, that the real components 
$\Cl_{\omega^{2}}, \ldots ,\Cl_{\omega^{m_0}}$ are trivial
and the cyclotomic units $\eta_{\omega^{2}}^{},\ldots,\eta_{\omega^{m_0}}^{}$
are not local $p$th powers at $p$.\,\footnote{The equivalence between
$\Cl_{\omega^{p-k}} \ne 1$ and $\eta_{\omega^{k}}$ being a local $p$th power
($k$ even) is given by the theory of $p$-adic $L$-functions or the 
reflection theorem; see Example 2.9.}

In the particular speculative case of the existence of a solution in
the first case of  Fermat${}'$s equation, from  results of
Krasner [Kr], [G2], and many authors,
for small values of odd $n'$, the last
Bernoulli numbers $B_{p-n'}$ must be divisible by $p$, say
$B_{p-3}, \ldots, B_{p-n'_0}$ for a small odd $n'_0$,
giving the nontriviality of the  relative components
$\Cl_{\omega^{3}}, \ldots ,\Cl_{\omega^{n'_0}}$ and the fact that 
the cyclotomic units $\eta_{\omega^{p-3}}^{},\ldots,\eta_{\omega^{p-n'_0}}$
are {\it local} $p$th powers (but not global $p$th powers because of the 
previous result of Soul\'e, at least up to min$\,(n_0, n'_0)$),
which creates a significant defect for the probabilities.

\smallskip
As we  see from the classical literature, strong diophantine
or analytic arguments are absent, even when 
the $p$-rank of the class group is involved since this $p$-rank is used as a 
formal variable. Moreover the second case is rarely studied.

\smallskip
Of course a great part of the point of view developped here is not
really new (many papers of the early twentieth century, contain
overviews of  our point of view) but we 
intend to organize the arguments in a more conceptual and accessible way, mainly to 
avoid Bernoulli${}'$s numbers considerations, 
and to suggest forthcoming studies in a more diophantine
or analytic context by using radicals instead of ideal classes.

\smallskip
 We will see on this occasion that class field theory, in its various aspects,
allows us to find again {\it all} classical technical properties, without dreadful 
computations. 

\smallskip
Some papers already go partially in this direction (e.g. 
Angl\`es [A2, A3], Granville [G1, G2], Helou [He1, He2], 
Terjanian [Te], Thaine [Th1, Th2, Th3], and many others).

Finally, we must mention that all these studies strongly depend on the 
base field (here $\Q$) since it is shown in [A2] that many results or 
conjectures fail for the Fermat equation over a number field 
$k\ne \Q$.

\medskip
In Section 1 we recall some basic facts
for the convenience of the reader; they can also be found for instance in 
Washington${}'$s book [Wa].

\smallskip
In Section 2 we recall some very useful properties of class field 
theory (notion of $p$-primarity which avoids painful computations, 
reflection theorems in the general setting developped in [Gr1, II.5.4])
and we introduce the radical $W$ associated to a solution in any case 
of the Fermat equation. 

Then we explain the insufficiency of the local study of FLT, and we
put the bases of a global approach with $W$
which does not separate the first and second cases of FLT.
We also examine the influence of a solution 
of the Fermat equation on other arithmetic invariants.

\smallskip
In Section 3, for the first case of FLT, we study $p$-adically
the radical $W$, introduced in 
Section 2, and show how Mirimanoff${}'$s polynomials are related to this 
radical, without use of Bernoulli${}'$s numbers; moreover we modify
these polynomials by introducing the characters of the Galois group, 
which illuminates the class field theory context.

From this, we show that
the classical Kummer and Mirimanoff congruences
are directly the expression of reflection theorems.

To be complete, we revisit some $p$-adic studies, as those of  Eichler [E1, E2],
covering works of  Br\"uckner [Br1, Br2] and  Skula [Sk1, Sk2].

We then return to the well-known fact that  Wieferich${}'$s criterion is
a consequence of reciprocity law and, in an Appendix,
we give a proof suggested by Qu\^eme; for
this simpler proof, we interpret, with current technics, some works of Fueter--Takagi (1922) and 
Inkeri (1948) (see [R, IX.4]) which do not use reciprocity law.

Finally we give a standard proof of the Germain--Wendt theorem,
and  intro\-duce some (perhaps new) ideas to compare
Mirimanoff${}'$s polynomials and Gauss${}'$s sums,
and to study ``\,Mirimanoff${}'$s sums\,'' defined as sums of roots 
of unity.

\smallskip
In Section 4, we give some conclusions and prospectives in various 
directions.

\smallskip
We are aware of the futility
of this attempt, but we believe that it can be help\-ful
(or disappointing) for those who wish
to pursue this kind of methodo\-logies.

\section{Classical results depending on a solution of Fermat${}'$s equation}

\medskip
Let $p$ be a prime number, $p > 2$.
Let  $a$, $b$, $c$ in $\Z\Sauf \{0\}$ be pairwise relatively 
prime integers, such that $a^p+b^p+c^p = 0$.
In the second case of FLT, we suppose that $p \,\vert \, c$.

We have the identity:
$$a^p+b^p = (a+b)\, \No_{K/\Q}\ (a + b \,\zeta) = -c^p,$$
where $\zeta$ is a primitive $p$th root of unity, $K = \Q(\zeta)$, and
$\No_{K/\Q}$ is the norm map in $K/\Q$.

\smallskip
Let $\mathfrak p$ be the unique prime ideal $(1 - \zeta)\,\Z[\zeta]$
of $K$ dividing $p$. We have ${\mathfrak p}^{p-1} = p\,\Z[\zeta]$.

\begin{lemm} Let $\nu$ be the $p$-adic valuation of $c$.
If $\nu \geq 1$, then 
$a+b = p^{\nu p -1} c_0^p$ and $\No_{K/\Q}(a + b \,\zeta) = p\, c_1^p$, 
with $p \notdiv c_0\,c_1$ and $p^\nu c_0\,c_1 = -c$. If $\nu = 0$
then $a+b = c_0^p$ and $\No_{K/\Q}(a + b \,\zeta) =  c_1^p$
with $c_0\, c_1 = -c$.\end{lemm}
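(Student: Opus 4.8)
The plan is to exploit the factorization $-c^p=(a+b)\,\No_{K/\Q}(a+b\zeta)=\prod_{i=0}^{p-1}(a+b\zeta^i)$ inside the Dedekind ring $\Z[\zeta]$, to track the $\mathfrak p$-adic and $\ell$-adic behaviour of the factors, and then to descend to a \emph{coprime} factorization of $-c^p$ in $\Z$, where unique factorization forces the exhibited $p$th powers. The two cases $\nu\geq 1$ and $\nu=0$ will run in parallel, differing only in whether $\mathfrak p$ divides the factors.

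First I would show the factors are pairwise coprime outside $\mathfrak p$. For $i\neq j$, any prime ideal dividing $(a+b\zeta^i)$ and $(a+b\zeta^j)$ divides the combinations $b(\zeta^i-\zeta^j)$ and $a(\zeta^j-\zeta^i)$; since $\gcd(a,b)=1$ it must divide $(\zeta^i-\zeta^j)=(\zeta^{i-j}-1)=\mathfrak p$, so $\mathfrak p$ is the \emph{only} possible common factor. Translating to $\Z$: if a rational prime $\ell\neq p$ divided both $a+b$ and $\No_{K/\Q}(a+b\zeta)=\prod_{i=1}^{p-1}(a+b\zeta^i)$, a prime $\mathfrak l\mid\ell$ would divide $(a+b)$ and some $(a+b\zeta^i)$ with $i\neq 0$, forcing $\mathfrak l=\mathfrak p$, impossible. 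Hence $\gcd\bigl(a+b,\No_{K/\Q}(a+b\zeta)\bigr)$ is a power of $p$.

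Next I would compute $\mathfrak p$-valuations, using $v_{\mathfrak p}(m)=(p-1)\,v_p(m)$ for $m\in\Z$ and $v_{\mathfrak p}(\zeta^i-1)=1$ for $i\neq 0$. When $\nu\geq 1$, reduction mod $\mathfrak p$ together with Fermat's little theorem gives $a+b\equiv a^p+b^p=-c^p\equiv 0\pmod p$, so $v_{\mathfrak p}(a+b)\geq p-1\geq 2$; then writing $a+b\zeta^i=(a+b)+b(\zeta^i-1)$ and comparing the two valuations via the ultrametric inequality yields $v_{\mathfrak p}(a+b\zeta^i)=1$ for each $i\neq 0$, whence $v_p(\No_{K/\Q}(a+b\zeta))=1$. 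Matching the total valuation $v_{\mathfrak p}(-c^p)=p(p-1)\nu$ against $v_{\mathfrak p}(a+b)+(p-1)$ then forces $v_p(a+b)=p\nu-1$. When $\nu=0$ the same reduction gives instead $a+b\not\equiv 0\pmod p$ and $p\notdiv\No_{K/\Q}(a+b\zeta)$, so both factors are prime to $p$.

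Finally I would descend to $\Z$. With $c_2=c/p^\nu$ (prime to $p$), the identity becomes $\bigl((a+b)/p^{p\nu-1}\bigr)\cdot\bigl(\No_{K/\Q}(a+b\zeta)/p\bigr)=(-c_2)^p$ (respectively $(a+b)\cdot\No_{K/\Q}(a+b\zeta)=(-c)^p$ for $\nu=0$), a product of two coprime rational integers equal to a $p$th power. Since $p$ is odd, unique factorization in $\Z$ makes each factor a $p$th power, giving $a+b=p^{p\nu-1}c_0^p$ and $\No_{K/\Q}(a+b\zeta)=p\,c_1^p$ (resp. $c_0^p$ and $c_1^p$); extracting $p$th roots then yields $p^\nu c_0c_1=-c$ (resp. $c_0c_1=-c$). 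The main obstacle I anticipate is the valuation bookkeeping — in particular establishing $v_{\mathfrak p}(a+b\zeta^i)=1$ by the ultrametric comparison without circular reference to $v_p(a+b)$, and tracking signs carefully so that the $p$th-root extraction in $\Z$ delivers exactly the relation $p^\nu c_0c_1=-c$.
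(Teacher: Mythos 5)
Your proposal is correct and follows essentially the same route as the paper: compute the $\mathfrak p$-adic valuations of $a+b$ and of $a+b\,\zeta^i$ via the ultrametric comparison of $a+b$ with $b\,(\zeta^i-1)$ (using $p>2$), then match against $v_{\mathfrak p}(-c^p)$ and descend to a coprime factorization of $-c^p$ in $\Z$. You are in fact slightly more complete than the paper, which leaves the coprimality-outside-$p$ and the final $p$th-root extraction implicit (the coprimality being essentially the content of its Lemma 1.2).
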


\begin{proof} If $p \,\vert\, c$, there exists $i$, $0 \leq i \leq p-1$, 
such that $a + b \,\zeta^i \in {\mathfrak p}$; thus $a + b \,\zeta^j \in {\mathfrak p}$
for all $j = 0, \ldots, p-1$ since $a + b \,\zeta^j \equiv a + b 
\,\zeta^i \bmod {\mathfrak p}$ for any $j$.

So $p \,\vert\, a+b$ and, since $p \notdiv b$, the ${\mathfrak p}$-adic 
valuations of $a+b$ and $b\,(\zeta-1)$ are $\mu (p-1)$ for some $\mu 
\geq 1$ and 1, respectively.

Since $p>2$, the ${\mathfrak p}$-adic 
valuation of $a+b\,\zeta = a+b + b\,(\zeta-1)$ is equal to 1
as well as for the conjugates 
$a+b\,\zeta^i$,  $i = 1, \ldots, p-1$.
The ${\mathfrak p}$-valuation of $\No_{K/\Q}(a + b \,\zeta)$ is thus
equal to $p-1$ and that of $a+b$ is $\mu(p-1) = (\nu p-1)(p-1)$,
and the lemma follows. \end{proof}

\begin{lemm} {\sl Let $\ell \ne p$ be a prime number dividing $c$.
Then $\ell \,\vert\, \No_{K/\Q}(a + b \,\zeta)$ if and only if $\ell 
\notdiv a+b$ (i.e., ${\rm g.c.d.}\,(c_0,\, c_1) = 1$).
Any $\ell \,\vert\, \No_{K/\Q}(a + b \,\zeta)$ is totally 
split in $K/\Q$.}
\end{lemm}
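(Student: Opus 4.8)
The plan is to exploit the factorization
$$a^p+b^p = \prod_{i=0}^{p-1}(a+b\,\zeta^i),$$
in which the $i=0$ factor is $a+b$ and the product over $i=1,\ldots,p-1$ equals $\No_{K/\Q}(a+b\,\zeta)$. The first thing I would establish is that these $p$ factors are pairwise coprime away from $\mathfrak p$: for $i\ne j$ we have $(a+b\,\zeta^i)-(a+b\,\zeta^j)=b\,(\zeta^i-\zeta^j)$, and $\zeta^i-\zeta^j$ is, up to a unit, a power of $1-\zeta$, hence divisible only by $\mathfrak p$. Since $a,b,c$ are pairwise coprime, $\ell\,\vert\,c$ forces $\ell\notdiv b$, so no prime of $K$ above $\ell$ can divide two distinct factors $a+b\,\zeta^i$.

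For the equivalence, the identity $(a+b)\,\No_{K/\Q}(a+b\,\zeta)=-c^p$ together with $\ell\,\vert\,c$ gives $\ell\,\vert\,(a+b)\,\No_{K/\Q}(a+b\,\zeta)$, so $\ell$ divides at least one of the two factors $a+b$ and $\No_{K/\Q}(a+b\,\zeta)$. By the coprimality just shown (and $\ell\ne p$) it cannot divide both; hence it divides exactly one, which is precisely the assertion $\ell\,\vert\,\No_{K/\Q}(a+b\,\zeta)\iff\ell\notdiv a+b$. Letting $\ell$ range over all primes $\ne p$ dividing $c$, and combining with $p\notdiv c_0c_1$ from Lemma 1, shows that $a+b$ and $\No_{K/\Q}(a+b\,\zeta)$ have no common prime factor, i.e.\ $\gcd(c_0,c_1)=1$.

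The main point is the total splitting. If $\ell\,\vert\,\No_{K/\Q}(a+b\,\zeta)$, I pick a prime $\lambda$ of $K$ above $\ell$; since $\lambda$ divides the rational integer $\No_{K/\Q}(a+b\,\zeta)=\prod_{i=1}^{p-1}(a+b\,\zeta^i)$, it divides one factor $a+b\,\zeta^i$ with $1\le i\le p-1$. Reducing modulo $\lambda$ and using that $b$ is invertible there (as $\ell\notdiv b$), I get $\zeta^i\equiv -a\,b^{-1}$, an element of the prime field $\F_\ell$; since $\gcd(i,p)=1$, raising to the power $i^{-1}\bmod p$ shows that the reduction $\bar\zeta$ itself lies in $\F_\ell$. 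As $\lambda$ lies above $\ell\ne p$, the element $\bar\zeta$ still has exact order $p$ (it cannot be $1$, since $\lambda\notdiv(1-\zeta)$, whose norm is $\pm p$). Hence the residue field $\Z[\zeta]/\lambda=\F_\ell[\bar\zeta]=\F_\ell$ has degree $1$, so $\lambda$ has residue degree $1$; because $K/\Q$ is abelian all primes above $\ell$ share this residue degree and $\ell$ is unramified, so $\ell$ has $p-1$ prime factors and splits completely (equivalently $\ell\equiv 1 \bmod p$). The one step I would write out with care is the passage from ``$\bar\zeta\in\F_\ell$'' to ``residue degree $1$'', which is exactly where the hypothesis $i\ne 0$ — forbidden for the factor $a+b$ — is indispensable.
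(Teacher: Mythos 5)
Your proof is correct and follows essentially the same route as the paper's: the coprimality of the factors $a+b\,\zeta^i$ away from ${\mathfrak p}$ (via $b\,(\zeta^i-\zeta^j)$ being divisible only by ${\mathfrak p}$), the identity $(a+b)\,\No_{K/\Q}(a+b\,\zeta)=-c^p$ for the equivalence, and the observation that $\zeta$ reduces to an element of $\F_\ell$ modulo a prime above $\ell$, forcing residue degree $1$ and hence total splitting. The only cosmetic difference is that where the paper replaces ${\mathfrak l}$ by a Galois conjugate so as to assume ${\mathfrak l}\,\vert\, a+b\,\zeta$, you keep the general factor $a+b\,\zeta^i$ and raise $\bar\zeta^{\,i}$ to the power $i^{-1}\bmod p$; the two devices are interchangeable.
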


\begin{proof}
If $\ell  \,\vert\, \No_{K/\Q}(a + b \,\zeta)$
we may suppose  that $a + b \,\zeta \in \mathfrak l$
for a suitable $\mathfrak l \,\vert\, \ell$ so that 
 $\zeta$ is congruent modulo  $\mathfrak l$ to a rational,
 $\mathfrak l$  is totally split in $K/\Q$, thus $\ell$ is congruent 
to 1 modulo $p$.

The case $\ell \notdiv a+b$ is clear.
If $\ell \,\vert\, a+b$ and if 
${\mathfrak l}\,\vert\, a + b \,\zeta$
for ${\mathfrak l}\,\vert\,\ell$, we get
$b\,(\zeta -1) \in {\mathfrak l}$ (absurd since  $\ell \notdiv b$.).
Thus $\ell \notdiv \No_{K/\Q}(a + b \,\zeta)$.\end{proof}

\begin{coro} {\sl(i)  We have $(a + b \,\zeta)\, \Z[\zeta] = {\mathfrak p} \,
{\mathfrak c}_1^p$ if $p\,\vert\, c$, where ${\mathfrak c}_1$
is an integral ideal prime to ${\mathfrak p}$,  and $(a + b \,\zeta)\,\Z[\zeta] = 
{\mathfrak c}_1^p$ if not. We have $\No_{K/\Q}({\mathfrak c}_1) = c_1$.

(ii) Moreover ${\mathfrak c}_1 = \prod_{\ell\vert c_1}
{\mathfrak l}^{\nu_{\ell}}$, $\nu_\ell > 0$, where 
${\mathfrak l}$ is, for each $\ell \,\vert\, c_1$, a suitable (unique) prime 
ideal above $\ell$.}
\end{coro}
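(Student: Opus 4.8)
The plan is to compute the prime-ideal factorization of $(a+b\,\zeta)\,\Z[\zeta]$ one prime at a time, treating $\mathfrak p$ separately from the primes $\mathfrak l \,\vert\, \ell$ with $\ell \ne p$, and then to read off both assertions. First I would record the key coprimality fact: writing $\sigma_j : \zeta \mapsto \zeta^j$, the conjugates $a+b\,\zeta^j = \sigma_j(a+b\,\zeta)$, $j=1,\ldots,p-1$, generate the ideals conjugate to $(a+b\,\zeta)$, and these conjugate ideals are pairwise coprime away from $\mathfrak p$. Indeed, if a prime $\mathfrak q \ne \mathfrak p$ divided both $a+b\,\zeta^i$ and $a+b\,\zeta^j$ with $i \ne j$, it would divide their difference $b\,\zeta^j(\zeta^{\,i-j}-1)$; since $\zeta^{\,i-j}-1$ is an associate of $1-\zeta$ and $\mathfrak q \ne \mathfrak p$, this forces $\mathfrak q \,\vert\, b$, hence $\mathfrak q \,\vert\, a$ as well, contradicting $\gcd(a,b)=1$.

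Next I would pin down the exponent of $\mathfrak p$. Because $\zeta \equiv 1 \bmod \mathfrak p$ and $a+b \in \Z$, one has $\mathfrak p \,\vert\, (a+b\,\zeta) \iff p \,\vert\, a+b \iff p \,\vert\, c$; and the computation already carried out in the proof of Lemma 1 shows that in this case $v_{\mathfrak p}(a+b\,\zeta)=1$. Thus the $\mathfrak p$-exponent is exactly the $\epsilon \in \{0,1\}$ of the statement, equal to $1$ precisely when $p \,\vert\, c$.

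Then, for a prime $\ell \,\vert\, c$ with $\ell \ne p$, Lemma 2 gives that $\ell$ is totally split, so $\ell\,\Z[\zeta] = \prod_{\mathfrak l \vert \ell}\mathfrak l$ has $p-1$ distinct factors of residue degree and ramification index $1$, and $\ell \,\vert\, \No_{K/\Q}(a+b\,\zeta) \iff \ell \notdiv a+b$. In the residue field $\Z[\zeta]/\mathfrak l \cong \F_\ell$, the image of $a+b\,\zeta$ vanishes exactly when $\zeta \equiv -a\,b^{-1}$; since $-a\,b^{-1}$ is then a primitive $p$th root of unity mod $\ell$ (the reduction being injective on $p$th roots of unity because $\ell \ne p$), it matches the image of $\zeta$ in exactly one of the $p-1$ residue fields. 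Hence there is a unique prime $\mathfrak l \,\vert\, \ell$ containing $a+b\,\zeta$, the other primes corresponding to the conjugate ideals. Comparing $\ell$-adic valuations via $\sum_{\mathfrak l \vert \ell} v_{\mathfrak l}(a+b\,\zeta) = v_\ell\big(\No_{K/\Q}(a+b\,\zeta)\big) = p\,v_\ell(c_1)$ and using that only that unique $\mathfrak l$ contributes, I would conclude $v_{\mathfrak l}(a+b\,\zeta) = p\,v_\ell(c_1)$. Assembling the local data yields $(a+b\,\zeta)\,\Z[\zeta] = \mathfrak p^{\,\epsilon}\,\mathfrak c_1^p$ with $\mathfrak c_1 = \prod_{\ell \vert c_1}\mathfrak l^{\,v_\ell(c_1)}$ integral and prime to $\mathfrak p$, which is (ii) and the ideal shape of (i); taking absolute norms ($\No_{K/\Q}\mathfrak p = p$, $\No_{K/\Q}\mathfrak l = \ell$) and comparing with the values $\No_{K/\Q}(a+b\,\zeta)=c_1^p$ (resp. $p\,c_1^p$) of Lemma 1 gives $\No_{K/\Q}(\mathfrak c_1)=c_1$.

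The hard part is the uniqueness-and-multiplicity claim of the third step: one must check both that a single prime above each $\ell$ absorbs the entire $\ell$-part of $(a+b\,\zeta)$ and that its exponent is divisible by $p$. Both are consequences of the coprimality of the conjugate ideals together with the total splitting of $\ell$, so once the residue-field description of the primes above $\ell$ is in place the argument is short and purely local-to-global; the remaining manipulations (the two valuation bookkeepings at $\mathfrak p$ and at $\ell$) are routine.
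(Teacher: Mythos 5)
Your proposal is correct and follows essentially the same route as the paper: the key step in both is that two distinct conjugates (of the prime $\mathfrak l$, or equivalently of the element $a+b\,\zeta$) cannot share a common prime divisor away from $\mathfrak p$, since that would force divisibility of $b\,(\zeta^{i}-\zeta^{j})$ and hence of both $a$ and $b$; after that, the norm values $c_1^p$ (resp. $p\,c_1^p$) from Lemma 1.1 force the $p$th-power shape and the identification $\No_{K/\Q}(\mathfrak c_1)=c_1$. You merely make explicit the valuation bookkeeping that the paper compresses into ``characterized by its norm $c_1^p$''.
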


\begin{proof} We have only to prove that if ${\mathfrak l} \,\vert\, a + b 
\,\zeta$,
then for any conjugate ${\mathfrak l}_i$ (by mean of the 
automorphism $\zeta \too \zeta^i$, $i \ne 1$), we have ${\mathfrak l}_i 
\notdiv a + b \,\zeta$; indeed, if not we would have
$b\,(\zeta^{-i}-\zeta) \in {\mathfrak l}$ (absurd).
Thus the ideal $\big(\frac{a+b\,\zeta}{1-\zeta} \big)\, \Z[\zeta]$
or $(a+b\zeta)\, \Z[\zeta]$ is characterized by its norm $c_1^p$ and 
is a $p$th power. \end{proof}

\begin{rema} (i) By permutation we have the following, with 
evident notations:
\begin{eqnarray*}
  &&a+b = p^{\nu p -1}c_0^p \ \ {\rm or} \ c_0^p, 
      \ \   \No_{K/\Q}(a+b \,\zeta) = p\, c_1^p \ \   {\rm or} \ c_1^p,  \ 
      \,{\rm with}\   -c = c_0\, c_1, \\
&&b+c =  a_0^p,  \ \ \  \No_{K/\Q}(b+c \,\zeta) =  a_1^p,  \ \,{\rm 
with} \ -a = a_0\, a_1, \\
 &&c+a = b_0^p,  \ \ \ \No_{K/\Q}(c+a \,\zeta) =  b_1^p,  \ \,{\rm 
 with}\ -b = b_0\, b_1,\\
 &&{\rm g.c.d.}\, (a_0,\, a_1) = {\rm g.c.d.}\,  (b_0,\, b_1) = {\rm 
 g.c.d.}\,  (c_0,\, c_1) = 1,
  \end{eqnarray*}
 \begin{eqnarray*}
&&(a+b\,\zeta)\,\Z[\zeta] = {\mathfrak p}\,{\mathfrak c}_1^p \ \ \hbox{or}\ 
\,{\mathfrak c}_1^p,
\ \ \hbox{with $\No_{K/\Q}({\mathfrak 
c}_1) = c_1$} ,  \\
&&(b+c\,\zeta)\,\Z[\zeta]  = {\mathfrak a}_1^p, \ \ \hbox{with $ \No_{K/\Q}({\mathfrak 
a}_1) = a_1$},   \\
&&(c+a\,\zeta)\,\Z[\zeta] = {\mathfrak b}_1^p, \ \ \hbox{with $ \No_{K/\Q}({\mathfrak 
b}_1) = b_1$}. \\
 \end{eqnarray*}

 \vspace{-0.2cm}
 (ii) All the prime numbers dividing $a_1b_1c_1$ are totally split 
 in $K/\Q$; thus any (positive) divisor of $a_1b_1c_1$ is congruent to 1 modulo $p$. \end{rema}
 
 \smallskip
 These computations and the proofs of FLT in particular cases
 suggest the following conjecture.
 
\begin{conj} {\sl Let $p$ be a prime number, $p>3$, and 
$K=\Q(\zeta)$, where $\zeta$ is a primitive $p$th root of unity.
Put  ${\mathfrak p} := (1 - \zeta)\,\Z[\zeta]$.

\smallskip\noindent
Then for $x,\,y \in \Z \Sauf \{0\}$, with g.c.d. $(x,y)=1$, the equation
$(x+y \,\zeta)\,\Z[\zeta] = {\mathfrak p}\, {\mathfrak z}^p \  {\rm or}\  
{\mathfrak z}^p$
(depending on whether $x+y \equiv 0 \bmod (p)$ or not),
where ${\mathfrak z}$ is an ideal of $K$ prime to~${\mathfrak p}$,
has no solution except the trivial cases:
 $x+y \,\zeta = \pm (1-\zeta) $ and $\pm 
(1+\zeta)$}. \end{conj}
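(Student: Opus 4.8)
The plan is to treat the assertion as a genuinely diophantine statement about the algebraic integer $\alpha := x+y\,\zeta$, pushing the class field theoretic reductions as far as they will go and then isolating the diophantine residue. First I would record the norm: since the conjugates of $\zeta$ are the $\zeta^i$ ($1\le i\le p-1$), one has $\No_{K/\Q}(\alpha)=\dfrac{x^p+y^p}{x+y}$, and the ideal hypothesis forces $\No_{K/\Q}(\alpha)=\pm z^p$ with $z:=\No_{K/\Q}(\mathfrak z)$. By the second Lemma and the Corollary above, every rational prime dividing $z$ is totally split in $K/\Q$, hence $\equiv 1\bmod p$, and the prime ideals of $\mathfrak z$ are pairwise nonconjugate; this coprimality of the $(x+y\,\zeta^i)$ is the structural input driving any descent. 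The proof then splits according to whether $z=1$ or $z>1$, i.e. whether $\mathfrak z$ is the unit ideal or not.

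If $z=1$ then $\mathfrak z=(1)$ and $(\alpha)$ is either $(1)$ or $\mathfrak p$; since $\mathfrak p$ is the unique prime above $p$ it is fixed by complex conjugation, so in both cases $(\alpha/\ov\alpha)=(1)$. As $K$ is a CM field, $\alpha/\ov\alpha$ is a unit all of whose archimedean absolute values equal $1$, hence a root of unity $\pm\zeta^{k}$ by Kronecker. The resulting identity $x+y\,\zeta=\pm\zeta^{k}(x+y\,\zeta^{-1})$, compared in the integral basis $1,\zeta,\dots,\zeta^{p-2}$, forces $|x|=|y|=1$ and returns precisely the trivial solutions $x+y\,\zeta=\pm(1-\zeta),\ \pm(1+\zeta)$, correctly sorted by the congruence $x+y\equiv 0\bmod p$.

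The case $z>1$ is the genuine heart of the problem. Even when $\mathfrak z=(\gamma)$ is principal --- which happens for every $p\nmid h_K$ --- one only gets $\alpha=u\,\gamma^{p}$ for a unit $u$ with $\gamma$ a nonunit, and the archimedean argument degenerates: from $\gamma^{p}\equiv\ov\gamma^{\,p}\bmod p$ the ratio $\alpha/\ov\alpha$ is merely congruent to a root of unity modulo $\mathfrak p$, which yields the Kummer and Mirimanoff congruences as necessary conditions on $(x,y)$ rather than an exact relation. When moreover $[\mathfrak z]$ is a nontrivial element of the $p$-torsion of $\Cl_K$, no relation $\alpha=u\,\gamma^{p}$ is available at all; here I would analyze the radical $W=\langle\,\alpha\,\rangle$ through Kummer theory and the reflection theorems of Section 2, extracting constraints on the eigencomponents $\Cl_{\omega^i}$ and the local $p$th-power behaviour of the cyclotomic units.

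I expect this last step to be the decisive --- and, with present techniques, insurmountable --- obstacle: all the conditions produced above are congruential, i.e. purely local, and, as the paper argues, they can be satisfied simultaneously in a thin but nonempty numerical setting, so they cannot by themselves eliminate a solution with $z>1$. Since a first-case Fermat triple already supplies a nontrivial solution with $\mathfrak z=\mathfrak c_1$ and $z=c_1$ (by the Corollary), the conjecture is strictly stronger than the first case of FLT; any honest proof therefore appears to require nonalgebraic input --- archimedean size estimates, linear forms in logarithms, or an $abc$/modularity-type bound --- to rule out the large, possibly non-principal solutions that class field theory detects only locally.
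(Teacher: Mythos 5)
The statement you were asked to prove is Conjecture~1.5 of the paper: the author explicitly presents it as an open conjecture (``It is likely that this conjecture has already been stated, but we have found no reference''), observes that it implies Fermat's last theorem for $p>3$ (it asserts that the single ideal relation $(x+y\,\zeta)\,\Z[\zeta]={\mathfrak p}\,{\mathfrak z}^p$ or ${\mathfrak z}^p$ already has no nontrivial solution, without the companion relation $x+y=p^{\nu p-1}c_0^p$ or $c_0^p$), and offers no proof. So there is no proof in the paper to compare yours against, and your proposal --- rightly --- does not claim to supply one: it stalls exactly where it must, at the case $z>1$.

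Within those limits your analysis is sound and even adds something the paper does not spell out. The reduction $\No_{K/\Q}(x+y\,\zeta)=\pm z^p$ with every prime divisor of $z$ totally split, and the pairwise nonconjugacy of the primes in ${\mathfrak z}$, follow from Lemma~1.2 and Corollary~1.3 exactly as you say (their proofs use only ${\rm g.c.d.}(x,y)=1$). Your disposal of the case $z=1$ is a correct, complete argument: $\alpha/\ov\alpha$ is a unit of absolute value $1$ at every archimedean place of the CM field $K$, hence $\pm\zeta^k$ by Kronecker, and comparing $x+y\,\zeta=\pm\zeta^k(x+y\,\zeta^{-1})$ in a $\Q$-basis of $K$ (using that any $p-1$ of the $p$th roots of unity are independent, so $p\ge 5$ is needed --- consistent with the hypothesis $p>3$) forces $|x|=|y|=1$ and yields exactly $\pm(1-\zeta)$, $\pm(1+\zeta)$. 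Your diagnosis of the case $z>1$ also matches the paper's own discussion in Sections~2--3: all that class field theory extracts from the hypothesis is local (Kummer/Mirimanoff-type congruences, reflection constraints on the $\Cl_{\omega^i}$ and on local $p$th powers), and the author argues at length that such conditions are ``probably too local'' to be contradictory. The only caveat is one of presentation: you should state up front that you are proving the statement only for ${\mathfrak z}=(1)$ and explaining why the remaining case is the open conjecture, rather than framing the whole as a proof plan; as written, a reader could mistake the final two paragraphs for steps still to be carried out rather than for an acknowledged impasse.
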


In other words, considering the two relations 
$(a+b\,\zeta)\,\Z[\zeta] = {\mathfrak p} {\mathfrak c}_1^p$ (or ${\mathfrak c}_1^p$)
and  $a+b = p^{\nu p-1} c_0^p$  (or  $c_0^p$),
equivalent to the existence 
of a solution of the Fermat equation, we assert that 
the second is unnecessary, the first 
one being equivalent to $\No(a+b\,\zeta) = p\, c_1^p$  (or  $c_1^p$).
It is likely that this conjecture has already  been stated, 
 but we have found no reference.

\section{Algebraic Kummer theory and reflection theorems}

This Section is valid for the two cases of FLT.

\subsection{$p$-primarity -- local $p$th powers}
The following Theorem 2.2 will be essential to clarify some aspects of 
ramification in Kummer cyclic extensions of degree $p$ of $K$. Let $ K_{\mathfrak p}$ be
the ${\mathfrak p}$-completion of the field $K$ (see [Gr1, I.6.3] 
for the classical notion of $p$-primarity due to Hasse).

\begin{lemm}Let $\alpha \in K^\times$ be prime to $p$ 
and such that $\alpha\,\Z[\zeta]$ is the $p$th power of an ideal of 
$K$.\,\footnote{Such numbers are called {\it pseudo--units} 
since units are a particular case; we will use this word to simplify.}

\noindent
The number $\alpha$ is $p$-primary (i.e., $K(\sqrt [p] {\alpha}\,)/K$ is 
unramified at ${\mathfrak p}$) if and only if it is a local $p$th power (i.e., 
 $\alpha \in K_{\mathfrak p}^{\times p}$).
This happens if and only if $\alpha$ is congruent to a $p$th power
modulo ${\mathfrak p}^{p} = (p) \,{\mathfrak p}$.
\end{lemm}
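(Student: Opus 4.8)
The plan is to disentangle the local and global content of the two equivalences, the delicate point being that ``unramified at $\mathfrak p$'' is \emph{a priori} weaker than ``$\mathfrak p$ splits'' (which is what $\alpha\in K_{\mathfrak p}^{\times p}$ means): the local field $K_{\mathfrak p}$ does admit a nontrivial unramified cyclic extension of degree $p$, and closing this gap is where the hypotheses really enter.

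First I would observe that the pseudo-unit hypothesis makes $K(\sqrt[p]{\alpha})/K$ unramified away from $\mathfrak p$. Indeed, writing $\alpha\,\Z[\zeta]=\mathfrak a^{\,p}$, for any prime $\mathfrak q\ne\mathfrak p$ one has $v_{\mathfrak q}(\alpha)=p\,v_{\mathfrak q}(\mathfrak a)\equiv 0\bmod p$; since $\mathfrak q\notdiv p$ and $\zeta\in K$, adjoining a $p$th root of the resulting $\mathfrak q$-unit gives a tame, hence unramified, local extension at $\mathfrak q$. Thus the only admissible ramification lies at $\mathfrak p$, and $p$-primarity is precisely the assertion that $K_{\mathfrak p}(\sqrt[p]{\alpha})/K_{\mathfrak p}$ is unramified. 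The implication $\alpha\in K_{\mathfrak p}^{\times p}\Rightarrow p$-primary is then immediate, since $K_{\mathfrak p}(\sqrt[p]{\alpha})=K_{\mathfrak p}$.

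The converse is the heart of the matter and uses a global argument. If $\alpha$ is $p$-primary then, by the preceding paragraph, $K(\sqrt[p]{\alpha})/K$ is unramified at every place, hence (being cyclic of degree dividing $p$) contained in the Hilbert class field of $K$; its Galois group is a quotient of $\Cl_K$ and the Artin symbol of an ideal is its ideal class. But $\mathfrak p=(1-\zeta)\,\Z[\zeta]$ is \emph{principal}, so its Frobenius is trivial and $\mathfrak p$ splits completely in $K(\sqrt[p]{\alpha})/K$. Splitting at $\mathfrak p$ says exactly that the completion is trivial, i.e. $\alpha\in K_{\mathfrak p}^{\times p}$. This is where the principality of $\mathfrak p$, together with the pseudo-unit condition, is indispensable: it rules out the unramified degree-$p$ local extension, for which $\alpha$ would be a local unit but not a local $p$th power.

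It remains to identify these conditions with the congruence modulo $\mathfrak p^{\,p}=(p)\,\mathfrak p$. Modulo $p$th powers I may strip off the Teichm\"uller factor $\omega(\bar\alpha)\in\mu_{p-1}\subset K_{\mathfrak p}^{\times p}$ (using $\gcd(p,p-1)=1$) and assume $\alpha\equiv 1\bmod\mathfrak p$. Expanding $(1+a(1-\zeta))^{p}$ and using Wilson's congruence $p\equiv-(1-\zeta)^{p-1}\bmod\mathfrak p^{\,p}$ (itself a consequence of $\prod_{i=1}^{p-1}(1-\zeta^{i})=p$ and $(1-\zeta^{i})/(1-\zeta)\equiv i$), one finds that the obstruction at level $\mathfrak p^{\,p}$ is the Artin--Schreier expression $\bar a^{\,p}-\bar a$, which vanishes identically on $\F_p$. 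Hence the extension is totally ramified precisely when $v_{\mathfrak p}(\alpha-1)<p$, and non-ramified precisely when $\alpha\equiv 1\bmod\mathfrak p^{\,p}$, i.e. when $\alpha\equiv\xi^{p}\bmod\mathfrak p^{\,p}$ for a suitable $\xi$. This is the local Hasse criterion [Gr1, I.6.3], giving $p$-primary $\Leftrightarrow$ (congruence); combined with the two implications above, the full chain follows. The main obstacle is the step $p$-primary $\Rightarrow\alpha\in K_{\mathfrak p}^{\times p}$: the purely local analysis only yields the weaker congruence modulo $\mathfrak p^{\,p}$, and it is the global principality of $\mathfrak p$ that upgrades ``unramified'' to ``split''.
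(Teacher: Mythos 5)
Your proof is correct and takes essentially the same route as the paper's: $p$-primarity plus the pseudo-unit hypothesis puts $K(\sqrt[p]{\alpha}\,)$ inside the $p$-Hilbert class field, where the principality of ${\mathfrak p}=(1-\zeta)$ trivializes its Frobenius and upgrades ``unramified at ${\mathfrak p}$'' to ``split at ${\mathfrak p}$'', the congruential criterion being the standard local fact that the paper merely cites. The only quibble is your phrase ``tame, hence unramified'' at ${\mathfrak q}\nmid p$ --- a tamely ramified extension of degree $p$ can perfectly well be totally ramified --- but the fact you need is true for the right reason, namely that $X^p-u$ has discriminant $\pm\, p^p u^{p-1}$, a ${\mathfrak q}$-unit when $u$ is one.
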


\begin{proof} One direction is trivial. Suppose that $K(\sqrt [p] 
{\alpha}\,)/K$ is 
unramified at ${\mathfrak p}$; since $\alpha$ is a pseudo-unit,
this extension is unramified as a global 
extension and is contained in the $p$-Hilbert class field $H$ of $K$. The 
Frobenius automorphism of ${\mathfrak p}$ in $H/K$ depends on the 
class of ${\mathfrak p}$ which is trivial since ${\mathfrak p}= 
(1 - \zeta)$; so ${\mathfrak p}$ splits totally in $H/K$, thus in $K(\sqrt [p] {\alpha}\,)/K$, 
proving the first part of the proposition.
The final congruential condition of $p$-primarity is well known (see 
e.g. [Gr1, Ch. I, \S\,6, (b)]).

\smallskip
Warning: the general condition of $p$-primarity in $K$ is 
``\,$\alpha$ congruent to a $p$th power
modulo ${\mathfrak p}^{p} = (p) \,{\mathfrak p}$\,'', but the general 
condition to be a local $p$th power at ${\mathfrak p}$
in $K$ is ``\,$\alpha$ congruent to a $p$th power
modulo ${\mathfrak p}^{p+1} = (p) \,{\mathfrak p}^2$\,''. The fact that 
``\,$\alpha$ is a pseudo-unit of $K$ implies the 
equivalence\,'' is nontrivial and specific of the pseudo-units of the
$p$th cyclotomic field
(such studies are given in [Th3], for special pseudo-units,
by means of explicit polynomial computations).\end{proof}

We have the following consequence, due to Kummer for units, 
which can be generalized to pseudo-units.

\begin{theo}  Every pseudo-unit $\eta$ of $K$,
congruent to a rational (respectively to a $p$th power) modulo 
$p$, is $p$-primary, thus a local $p$th power at ${\mathfrak p}$. If moreover the $p$-class 
group of $K$ is trivial, $\eta$ is a global $p$th power.\end{theo}

\begin{proof} We have, for a suitable rational $\rho$, $\eta^{p-1}
\equiv \rho^{p-1} \equiv 1 \bmod (p)$ in $\Z_{(p)}[\zeta]$,
where $\Z_{(p)}$ is the localization of $\Z$ at $p$.

Put  $\eta^{p-1} = 1 + 
p \,\delta$, $\delta \in \Z_{(p)}[\zeta]$, and $(\eta) = {\mathfrak n}^p$;
taking the norm
of the relation $(\eta^{p-1})={\mathfrak n}^{(p-1)p}$   we get
 $\No_{K/\Q}(\eta^{p-1}) = n^{(p-1)p}$ with $n^{p-1}\equiv 1 \bmod (p)$, hence
$1 \equiv 1 + p \,{\rm Tr}_{K/\Q}(\delta) \bmod (p^2)$ giving 
${\rm Tr}_{K/\Q}(\delta) \equiv 0 \bmod (p)$, thus $\delta \in 
{\mathfrak p}$, proving the first part of the theorem (see Lem.\,2.1).

If $\eta \equiv u^p  \bmod (p)$, $u = \sum u_i\,\zeta^i \in 
\Z_{(p)}[\zeta]$, then $u^p \equiv \sum u_i^p =: \rho \in \Z_{(p)}$
modulo $p$; recipro\-cally, $\eta \equiv \rho  \bmod (p)$
implies $\eta \equiv \rho^p  \bmod (p)$.

The extension
$K(\sqrt[p]\eta\,)$ is thus unramified; so
if the $p$-class group of $K$ is trivial, this extension must be 
trivial, which finishes the proof.\end{proof}

When the $p$-class group of $K$ is trivial,  $K$ is said to be
$p$-regular (in the Kummer sense),
which is here equivalent to its $p$-rationality; this property 
implies in general the above result for units. 
See [MN], [JN], [GJ] for these notions in general, and [AN]
where the Kummer property is generalized. See Subsections 2.5, (a)  
and (b) for the study of the invariants ${\mathcal T}(K)$ and $R_2(K)$ whose 
triviality characterizes the $p$-rationality and the 
$p$-regularity (in the ${\rm K}$-theory sense), respectively.

\subsection{Introduction of some radicals}
We begin by the following remarks, 
from a solution $(a, b, c)$ of the Fermat equation,
which are the key of the present study.
 
\begin{rema}
(i) We  note that we have $(a+b\,\zeta)\Z[\zeta] = {\mathfrak p}{\mathfrak c}_1^p$
or ${\mathfrak c}_1^p$ (see Cor.\,1.3, (i), or Rem.\,1.4, (i)).
This means that the Kummer cyclic extensions (of degree $p$ or 1)
$K(\sqrt [p]{a+b\,\zeta^i}\,)/K , \ \ i=1,\ldots,p-1$,
are $p$-ramified (i.e. unramified outside $p$).
In the same way,
$K(\sqrt [p]{b+c\,\zeta^j}\,)/K,\ \ K(\sqrt [p]{c+a\,\zeta^k}\,)/K$,
$j,\, k=1,\ldots,p-1$,
are $p$-ramified cyclic extensions.

\smallskip
(ii)  When $p\,\vert\, c$, the extensions $K(\sqrt 
[p]{b+c\,\zeta^j}\,)/K$, $j=1,\ldots,p-1$,  are unrami\-fied:
indeed we have $b+c\,\zeta^j  \equiv b \bmod (p)$,
hence the conclusion with Theorem 2.2.

But we know that these extensions must split at ${\mathfrak p}$
which implies that necessarily $c \equiv 0 \mod (p^2)$.\,\footnote{
We have  $b+c\,\zeta
= (b+c)\big (1+ \frac{c}{b+c}\,(\zeta-1)\big)$ where
$b+c = a_0^p$.
Let $1+ \frac{c}{b+c}\,(\zeta-1) = (1+ u\,(\zeta - 1))^p$ locally; if
$u\equiv u_0 \bmod {\mathfrak p}$, with $u_0 \in \Z$, then
$\zeta^{-u_0}\,(1+ u\,(\zeta - 1)) \equiv 1 \bmod {\mathfrak p}^2$,
giving $1+ \frac{c}{b+c}\,(\zeta-1)\equiv 1 \bmod (p)\,{\mathfrak 
p}^2$, thus $c \equiv 0 \bmod (p)\,{\mathfrak  p}$, hence modulo 
$p^2$.}

\smallskip
We have $c+a\,\zeta^k  = \zeta^k\,(a + c\,\zeta^{-k})$ with
$a + c\,\zeta^{-k} \equiv a \bmod (p)$; thus 
in the compositum 
$K(\sqrt [p]{\zeta}\,, \, \sqrt [p]{c+a\,\zeta^k}\,)$
(where  $K(\sqrt [p]{\zeta}\,)/K$ is also $p$-ramified) 
we obtain the  unramified extensions
$K\big (\sqrt [p]{a+c \,\zeta^{k'}}\,\,\big)\big/K$, $k'=1,\ldots,p-1$,
and similarly with $c+b\,\zeta^j$.

\smallskip
(iii) If $p\,\vert\, c$, then from  Corollary 1.3, (i),
the  pseudo-units $\,\Frac{a + b\, 
\zeta^i}{1-\zeta^i}$ are such that 
$\Frac{a + b\, \zeta^i}{1-\zeta^i} = \Frac{a + b}{1-\zeta^i}-b
\equiv -b \bmod (p)$ since $a+b$ is of $p$-valuation
$\nu\,p-1 \geq 2$. Theorem 2.2 implies that the
$\Frac{a + b\, \zeta^i}{1-\zeta^i}$ are local $p$th powers at
${\mathfrak  p}$ and that the extensions
$K\big (\sqrt [p]{\frac{a + b\, \zeta^i}{1-\zeta^i}}\,\big)\big/K$
are unramified.\end{rema}

\begin{nota} Let $E_p$ be the group of $p$-units of $K$.
Then $E_p = \langle \,\zeta,\, 1 - 
\zeta\,\rangle \oplus E^+$,
where $E^+$ is the group of units of the maximal 
real subfield $K^+$ de $K$.
Put $ E^+ = \langle \,\varepsilon_i\,
\rangle_{i=1,\ldots,{\frac{p-3}{2}}}$, and
for $i,j,k = 1,\ldots, p-1$, put:
\begin{eqnarray*}\hspace{0.3cm}
     \Omega_{\ }  &:=& \, \langle\, a+b\,\zeta^i, \,b+c\,\zeta^j,\, c+a\,\zeta^k 
  \,\rangle, \\
  \Gamma_{\ } \, &:=& \, \langle\,\zeta,\ 1 - \zeta,\ \varepsilon_1, 
  \ldots,\varepsilon_{\frac{p-3}{2}},\ 
a+b\,\zeta^i,\  b+c\,\zeta^j,\  c+a\,\zeta^k\, \rangle \,=\, E_p\, \oplus 
\,\Omega, \\
   W_c &:=& \,\langle\, a+b\,\zeta^i\, \rangle_{i} \,.\,
  K^{\times p}/ K^{\times p}, \\
  W_a &:=& \,\langle\, b+c\,\zeta^j\, \rangle_{j} \,.\,
  K^{\times p}/ K^{\times p}, \\
   W_b &:=& \,\langle \, c+a\,\zeta^k\,
    \rangle_{k} \,.\,
  K^{\times p}/ K^{\times p} , \\
  W_{\ } &:=& \,\Gamma  \,.\,  K^{\times p}/ 
  K^{\times  p} .
 \end{eqnarray*}
 If $p\,\vert\,c$ (second case of FLT), we introduce the group:
 
 \smallskip
 $\hspace{1.55cm}\Omega_{\rm prim}\! :=  \langle\, \frac{a+b\,\zeta^i}{1-\zeta^i},
 \,b+c\,\zeta^j,\, a+c\,\zeta^k \,\rangle$,
for which $\Gamma = E_p \oplus \Omega_{\rm prim}$.
\end{nota}
 
\begin{rema} (i) It is easy to see from Corollary 1.3, (ii), that the
$3(p-1) + \frac{p+1}{2}$ elements
$\zeta,\ 
1 - \zeta,\ \varepsilon_1, \ldots,\varepsilon_{\frac{p-3}{2}},\ 
a+b\,\zeta^i,\  b+c\,\zeta^j,\  c+a\,\zeta^k,\  i,j,k=1,\ldots,p-1$,
are  multiplicatively independent and, due to their particular form, the 
idea is that they are largely independent in $K^{\times}/ K^{\times p}$
(this is the main diophantine argument).

\smallskip
Unfortunately, this is probably very difficult to prove since it looks 
like Vandiver${}'$s conjecture (which applies to the cyclotomic 
$p$-units, generated by $1 - \zeta$ and its conjugates, which are not 
independent in $K^{\times}/ K^{\times p}$ as soon as Vandiver${}'$s 
conjecture is false). But in fact we will see 
below that the required condition is not the total independence
of the above numbers in $ K^{\times}/ K^{\times p}$ because of analytic formulas.

\smallskip\smallskip
(ii) It is evident that $\zeta$,
$1 - \zeta$, $\varepsilon_1, \ldots,\varepsilon_{\frac{p-3}{2}}$ are 
independent in $ K^{\times}/ K^{\times p}$ since it is by definition a
$\Z$-basis of $E_p$.

\smallskip
(iii) We have $W = \Gamma\,.\,  K^{\times p}/K^{\times  p}$ and 
$E_p\,.\,  K^{\times p}/K^{\times  p} \simeq E_p/E_p^p$;
then: 
$$\Gamma\,.\,  K^{\times p}/E_p\,.\,  K^{\times p}
\simeq \Gamma/\Gamma \cap (E_p\,.\,  K^{\times p}) \simeq 
\Omega/\Omega \cap (E_p\,.\,  K^{\times p})$$ 
whose order is the degree
$\big [K(\sqrt[p]{\Gamma}\,) \,:\, K(\sqrt[p]{E_p}\,)\big ]$.

\smallskip
(iv) If $p\,\vert\,c$, then $K(\sqrt[p]{\Omega_{\rm prim}}\,)/K$ is 
unramified and $K(\sqrt[p]{\Gamma}\,) / K(\sqrt[p]{E_p}\,)$
is unramified hence ${\mathfrak p}$-split
of degree $(\Omega_{\rm prim}:\Omega_{\rm prim} 
\cap (E_p\,.\,  K^{\times p}))$ (nonramification 
and decomposition propagate by extension),
which will be interpreted in Subsection 2.3.\end{rema}

Denote by $K(\sqrt[p] W\,)$ the extension $K(\sqrt[p] \Gamma\,)$.
We conclude  (Rem.\,2.3) that the extension $K(\sqrt[p] W\,)/K$ is a ${\Pl_p}$-ramified 
$p$-elementary abelian extension of $K$ (i.e., abelian of exponent $p$),
where ${\Pl_p}$ is the set of places of $K$ 
above $p$ (here reduced to the singleton $\{ {\mathfrak p}\}$).

\subsection{Use of class field theory: abelian ${\Pl_p}$-ramification}
Let $H_{\Pl_p}$ be the maximal ${\Pl_p}$-ramified abelian pro-$p$-extension of $K$, 
and let $\Cl_{\Pl_p}$ be the generalised $p$-class 
group of $K$ (i.e., the direct limit of the 
$p$-ray class groups modulo rays groups of conductor a power of $p$);
we have:
$${\rm Gal\,}(H_{\Pl_p}/K) \simeq \Cl_{\Pl_p}.$$

From the general reflection formula proved in [Gr1, II.5.4.1, (iii)] we 
obtain:\,\footnote {For any abelian group $A$
we denote by ${\rm rk}_p(A)$ the $\F_p$-dimension of $A/A^p$.}
$${\rm rk}_p(\Cl_{\Pl_p}) - {\rm rk}_p(\Cl^{\Pl_p}) =
\vert \,\Pl_p\, \vert + p-1 - \hbox{$\Frac{p-1}{2} = 
\Frac{p+1}{2}$}. $$ 

Recall that in this formula, $\Cl^{\Pl_p}$ (the $\Pl_p$-class group) 
is the quotient of the $p$-class group $\Cl$ by the subgroup generated by the classes of 
the prime ideals above $p$, which gives, as we have seen, $\Cl^{\Pl_p} = \Cl$.
 
 \smallskip
 From the above, since $K(\sqrt[p] W\,) \subseteq H_{\Pl_p}$, we get: 
 $$ {\rm rk}_p(\Cl) =  {\rm rk}_p(\Cl_{\Pl_p}) -  \hbox{$\Frac{p+1}{2}$}
 \geq {\rm rk}_p(W) - \hbox{$\Frac{p+1}{2}$}. $$
 Now we can prove the following from a solution $(a,b,c)$ of the 
 Fermat equation:
 
 \begin{theo} Let $W$ be the radical generated,
in  $K^{\times}/ K^{\times  p}$, by the group of $p$-units $E_p$ and the numbers
$a+b\,\zeta^i$, $b+c\,\zeta^j$, $c+a\,\zeta^k$, $i, j, k 
=1,\ldots,p-1$.\,\footnote{In the second case of FLT with $p\,\vert\,c$, $a+b\,\zeta$
is not a pseudo-unit, but $\frac{a+b\,\zeta}{1-\zeta}$, $b+c\,\zeta$, $c+a\,\zeta$ are pseudo-units;
thus $W$ is generated by $1-\zeta$ and pseudo-units.}

\smallskip\noindent
Then we have the inequalities
$\ {\rm rk}_p(W)  \leq \frac{p+1}{2} + {\rm rk}_p(\Cl) \leq p$.

\smallskip\noindent
If moreover $p$ is regular (i.e., if $\Cl$ is trivial) then
$\ W = E_p/E_p^p$.\end{theo}

\begin{proof} From many authors 
(see e.g. [G3] for more history), we know that the relative class 
number $h^-$, i.e., the order of the relative class group 
$C^- \!:= {\rm Ker}\big( \No_{K/K^+} : C \too C^+ := C_{K^+} \big)$,
is such that ${\rm log}(h^-) <  \hbox{$\frac{p}{4}$} {\rm log}(p)$
which proves that ${\rm rk}_p(\Cl^-) \leq  \hbox{$\frac{p-1}{4}$}$.
From classical Hecke--Leopoldt reflection theorem, we get
${\rm rk}_p(\Cl^+) \leq   {\rm rk}_p(\Cl^-)$
giving the (very bad) inequality
${\rm rk}_p(\Cl) \leq \hbox{$\frac{p-1}{2}$}$,
and the first part of the theorem. 

If $p$ is regular we get ${\rm rk}_p(W) \leq \frac{p+1}{2}$;
since $W$ contains $E_p/E_p^p$ which is of $p$-rank $\frac{p+1}{2}$
we have the equality, proving the theorem.\end{proof}

In the regular case we obtain the following (see Not.\,2.4):

\smallskip
(i) {\it First case of FLT}. From Remark 2.5, (iii), we obtain
$\Omega \subset E\,.\,K^{\times p}$ since in the first case the 
elements of $\Omega$ are pseudo-units. Then in that case, all
the elements 
$a+b\,\zeta^i$, $b+c\,\zeta^j$, and $\,c+a\,\zeta^k$ are of the form
$\varepsilon \,.\, \alpha^p$, $\varepsilon \in E$, $\alpha \in \Z[\zeta]$.
Of course, one can take for $\varepsilon$ a cyclotomic unit
since the group of cyclotomic units is of prime to $p$ index in $E$.

\smallskip
(ii) {\it Second case of FLT}. From Remark 2.5, (iv),
and Theorem 2.2,  we obtain
$\Omega_{\rm prim} \subset K^{\times p}$; so in the 
second case (with $p\,\vert\, c$), all the elements
$\frac{a+b\,\zeta^i}{1-\zeta^i},\,b+c\,\zeta^j\,$, and $a+c\,\zeta^k$
are global $p$th powers,which can perhaps simplify the usual proof.

\smallskip\smallskip
From this we obtain easily the classical proofs by Kummer of FLT
as those given in [W, Th.\,1.1 and Th.\,9.3] or in [Hel, Chap. 1, \S\,8.4].

\smallskip
However, Eichler${}'$s theorem [E1, E2] (i.e., ${\rm rk}_p(\Cl^-) \leq [\sqrt 
{p+1} - 1.5\,]$ implies the first case of FLT), that we will discuss and prove later 
(Th.\,3.14), may be considered 
as a wide generalization of the regular case, but limited to the first case of FLT
(see also [W, Th.\,6.23] or [R, IX.7] for similar proofs).

\smallskip\smallskip
In the general case, the unlikely equality ${\rm rk}_p(\Cl^+) = {\rm 
rk}_p(\Cl^-)$ used for the proof of Theorem 2.6 supposes the 
following facts (see [Gr1, II.5.4.9.2]) for which we introduce the 
characters of the Galois group:

\begin{nota} (i) Let $g={\rm Gal\,}(K/\Q)$ and
let $\omega$ be the character of Teichm\"uller of $g$ (i.e.,
the character with values in $\mu_{p-1}(\Q_p)$ such that 
for the $s_k \in g$ defined by $s_k(\zeta) = \zeta^k$, 
$k=1,\ldots,p-1$, $\omega(s_k)$ is the unique $(p-1)$th root
of unity in $\Q_p$, congruent to $k$ modulo $p$).
We will also write $\omega(k) := \omega(s_k)$.

\smallskip
(ii) Any irreducible $p$-adic character of $g$ is of the form
$\chi := \omega^m$, for $m \in \{1, \ldots, p-1\}$; we denote by $\chi_0$ the 
unit character ($m= p-1$).

\smallskip
If $\chi$ is any $p$-adic character of $g$, we put $\chi^* := \omega \chi^{-1}$
(reflection character).

\smallskip
(iii) The idempotent corresponding to $\chi$ is:
\vspace{-0.1cm}
$$ e_\chi :=  \hbox{$\frac{1}{p-1}$} \sm_{s \in g} \chi(s^{-1})\, s = 
\hbox{$\frac{1}{p-1}$} \sm_{k=1}^{p-1} \chi^{-1}(k)\, s_k\,\in \Z_p[g] . $$
 The action of $e_\chi$ on a $\Z_p[g]$-module is well-defined; for
a $\Z[g]$-module $M$, we use instead
the $\Z_p[g]$-module $M\tensorZ \Z_p$ or the $\Z_p[g]$-module
$M\tensorZ \F_p \simeq M/M^p$; by abuse of notation we  write $M_\chi := M^{e_\chi}$ 
for the $\chi$-component of $M$ in the above sense. 

For instance, we denote by ${\rm rk}_p(\Cl_{\chi})$ the $p$-rank of the
$\chi$-component $\Cl_{\chi}$ of the $p$-class group $\Cl$ \big(\,$\Cl_{\chi}$ is 
thus the maximal submodule of $\Cl$ on which $g$ acts  via $c^s = 
c^{\chi(s)}$ for all $s \in g$ and any class $c \in \Cl_{\chi}$\big).

For the group $E$ of units, $E_\chi := E^{ e_\chi}$ must be 
interpreted in $E\tensorZ \Z_p$ or $E/E^p$ depending on the context.

(iv) Let $K_\chi$ be the subfield of $K$ fixed by ${\rm Ker}(\chi)$.
\end{nota}

\smallskip
 To be self-contained, we recall here the main 
classical results which will be of constant use.

\begin{theo}[Prerequisites]
{\rm (i) (Kummer duality; see [Gr1, Rem.\,II.5.4.3])}. Let $H{\st [p]}$ be the 
$p$-elementary $p$-Hilbert class field of $K$, $A:={\rm Gal}(H{\st [p]}/K)$, 
and $R$ the radical of $H{\st [p]}$ (i.e., $A\simeq \Cl/\Cl^p$ and
$H{\st [p]} = K(\sqrt[p] R\,)$).

For any character 
$\chi$ of $g$ and for $\chi^* := \omega\,\chi^{-1}$ we have the canonical 
isomorphism of $g$-modules:
$${\rm Gal} (K(\sqrt[p] {R_{\chi^*}}\,)/K) \simeq A_\chi\,\cdot$$

\smallskip
Then we have $R_{\chi^*} \subset K_{\chi^*}$ and $K(\sqrt[p] 
{R_{\chi^*}}\,)/K$ splits over $K_{\chi}\,\cdot$

\medskip
{\rm (ii) (Reflection theorems; see [Gr1, 5.4.9.2, ``\,Analysis of a 
result of Hecke\,''])}. For any even character $\chi \ne \chi_0$
and for $\chi^* := \omega\,\chi^{-1}$ we have:
$${\rm rk}_p((Y/Y_{\rm prim})_{\chi^*}) =
{\rm rk}_p(\Cl_{\chi^*}) -  {\rm rk}_p(\Cl_{\chi})=
1 - {\rm rk}_p((Y/Y_{\rm prim})_{\chi}), $$

\smallskip
where $Y$ is the group of  pseudo-units of $K$ (elements 
equal to the $p$th power of an ideal prime to ${\mathfrak p}$),
and where $Y_{\rm prim}$ is the subgroup of $p$-primary pseudo-units 
(i.e., local $p$th powers at ${\mathfrak p}$).

\medskip
{\rm (iii) (Main theorem on cyclotomic fields of Thaine--Ribet--Mazur--Wiles--Kolyvagin;
see [W, \S\,15.4])}.
 For any even character $\chi \ne \chi_0$
 and for $\chi^* := \omega\,\chi^{-1}$ we have:

\smallskip
$\ \ \ \ \ \bullet\ \ $ $\vert\,\Cl_\chi\,\vert = 
\vert\,\big(\,  \langle \,\varepsilon_\chi\,\rangle :
 \langle \,\eta_\chi\,\rangle\,\big) \,\vert_p^{-1}$, where
 $\varepsilon_\chi$ is a generator of $E_\chi$ and $\eta_\chi=(1-\zeta)^{e_\chi}$.
 
$\ \ \ \ \ \bullet\ \ $ $\vert\,\Cl_{\chi^*}\,\vert = 
\vert\, b_{\chi^*} \, \vert_p^{-1}$, where
$b_{\chi^*} := \hbox{$\frac{1}{p}$} \sm_{k=1}^{p-1} ({\chi^*})^{-1} 
(k)\, k$.
\end{theo}

The use of the deep result (iii) is not really necessary in this paper but 
it clarifies the reasonings since we are only interested by the 
logical aspects of the influence of a solution of Fermat${}'$s
equation on these invariants and not by an optimization of the statements.

\begin{exa}{\rm  If for an  even $\chi \ne \chi_0$, 
the group $\Cl_{\chi^*}$ is nontrivial, there exists a 
nontrivial ${\chi^*}$-pseudo-unit $\alpha_{\chi^*}$ (i.e.,
$\alpha_{\chi^*} \notin K^{\times p}$).

 \medskip
If $\alpha_{\chi^*}$ is $p$-primary then from (i) this defines a 
$\chi$-unramified cyclic extension of degree $p$ of 
$K_\chi$; so that  $\Cl_{\chi} \ne 1$ and
$\big(\,  \langle \,\varepsilon_\chi\,\rangle :
 \langle \,\eta_\chi\,\rangle\,\big) \equiv 0 \bmod (p)$
from  (iii) (counterexample to the Vandiver conjecture).
 
 \medskip
 If $\alpha_{\chi^*}$ is not $p$-primary then from (ii) we get
 ${\rm rk}_p((Y/Y_{\rm prim})_{\chi^*}) = 1$ and
 ${\rm rk}_p((Y/Y_{\rm prim})_{\chi}) = 0$ which implies that all
 the $\chi$-pseudo-units are $p$-primary, especially 
 $\varepsilon_\chi$,
 hence $\eta_\chi \in  \langle \,\varepsilon_\chi\,\rangle$ is also a 
 local $p$th power at ${\mathfrak p}$. We have 
 obtained a class field theory version of a result given by the 
following properties of $p$-adic $L$-functions: 
 \begin{eqnarray*}
 L_p(0,\chi) &\equiv& L_p(1,\chi) \bmod (p) \ \ \hbox{[W, 
 Cor.\,5.13]}\,, \\
 L_p(0,\chi) &=& - b_{\chi^*} \ \ \hbox{[W, Th.\,5.11]}\,, \\
  L_p(1,\chi) &=&  \hbox{$\frac{\tau(\chi)}{p}$} 
  \sm_{k=1}^{p-1} \chi^{-1} (k) {\rm log}(1-\zeta^k)
  =  \hbox{$\frac{\tau(\chi)}{p}$} \, {\rm log} (\eta_{\chi}^{p-1}) 
  \ \ \hbox{[W, Th.\,5.18]}\,, 
 \end{eqnarray*} 
  
where the Gauss sum $\tau(\chi)$ is of ${\mathfrak p}$-valuation
 $\leq p-2$, giving easily $b_{\chi^*} \equiv 0$
 $\bmod\, (p)$ if and only if $\eta_{\chi}$ is a local $p$th power 
 at ${\mathfrak p}$ (see 
 Subsec. 3.3 and 3.4)}.
\end{exa}

Then from the above, concerning the
equality ${\rm rk}_p(\Cl^+) = {\rm rk}_p(\Cl^-)$,
we would have, for each
even  $\chi$ such that ${\rm rk}_p(\Cl_{\chi^*}) \geq 1$, the alternative
${\rm rk}_p(\Cl_{\chi^*}) \geq 
2$, or ${\rm rk}_p(\Cl_{\chi^*}) = 1$ and in the writing ${}_p\Cl_{\chi^*} = 
\langle\,\cl ({\mathfrak a}_{\chi^*}) \,\rangle$ then
${\mathfrak a}_{\chi^*}^p =: (\alpha)$ with
 $\alpha$  $p$-primary; all this is of course very strong because of the 
proba\-bilistic value of ${\rm rk}_p(\Cl^+)$ discussed in ``\,Introduction
and Generalities\,''.

\smallskip
We will return to reflection theorem in the proof of Theorems 3.7 and 3.9.

\medskip
If we refer to [W, \S\,6.5], the value of ${\rm rk}_p(\Cl)$
is conjecturally ${\mathcal O} \big(\frac{{\rm log}(p)}{{\rm log}({\rm 
log}(p))} \big)$.
With such a result, the inequality of Theorem 2.6 would be:
$${\rm rk}_p(W) \leq \hbox{$\Frac{p+1}{2}$} + {\mathcal O} \Big( 
\hbox{$\Frac{{\rm log}(p)}{{\rm log}({\rm log}(p))}$} \Big),  $$
noting that the principal term $\frac{p+1}{2}$ comes from the $p$-units;
this means, from Remark 2.5, (iii), that most of the 
elements of $\Omega$ (see Not.\,2.4) are of the 
form $\varepsilon \,.\, \alpha^p$, $\varepsilon\in E_p$, $\alpha\in \Z[\zeta]$.
In case Vandiver${}'$s conjecture is satisfied, Theorem 2.6 reduces to:
 $${\rm rk}_p(W) \leq \Frac{p+1}{2} + \Frac{p-1}{4},
 \ \,{\rm instead\  of} \ \, \leq  p. $$

It is implausible that the $p$-rank of the radical $W$, generated by
the images in $K^{\times}/ K^{\times p}$ of the
$3(p-1) + \frac{p+1}{2}$
multiplicatively independent elements of $\Gamma$, could be less than~$p$.

\subsection{Comparison of the local and global approaches}
Now we intend to show that any restriction to the
local case  leads to
the following fact, where $K_{\mathfrak p}$ is the completion of $K$ 
at ${\mathfrak p}$:
$${\rm rk}_p \Big( {\rm Gal\,}
\big( K_{\mathfrak p}(\sqrt[p] W\,) / K_{\mathfrak p} )\Big) \leq p ; $$
in other words, the four radicals $W_a$, $W_b$, $W_c$, $E_p/E_p^p$
become largely dependent by ${\mathfrak p}$-completion of the base field.

More precisely, we have $K_{\mathfrak p}(\sqrt[p] W\,) =
K_{\mathfrak p}(\sqrt[p]{ W_{\mathfrak p}}\,)$,
where $W_{\mathfrak p} = \Gamma\,.\, K_{\mathfrak p}^{\times p}/
K_{\mathfrak p}^{\times p}$
is the local radical generated by the image  in 
$K_{\mathfrak p}^{\times}/ K_{\mathfrak p}^{\times p}$  of the
$3(p-1) + \frac{p+1}{2}$ elements
$\zeta,\,1 - \zeta ,\, \varepsilon_1,\, 
\ldots,\varepsilon_{\frac{p-3}{2}}$,  
$a+b\,\zeta^i,\, b+c\,\zeta^j,\, c+a\,\zeta^k$, $i,j,k=1,\ldots,p-1. $

\smallskip
For instance, if $p\,\vert\, c$, $W_{\mathfrak p}$ is the local 
radical generated by $E_p$ (see Rem.\,2.5, (iv)).

\medskip
Since ${\mathfrak p}$ splits completely in 
$H$ and is totally ramified in $H_{\Pl_p}/H$, 
by local class field theory the $p$-rank of
${\rm Gal\,}\big(H_{\Pl_p}/H\big)$
is less than or equal to the  $p$-rank of the
inertia group of the maximal ${\mathfrak p}$-ramified  abelian 
pro-$p$-extension $M_{\mathfrak p}$ of $K_{\mathfrak p} = H_{\mathfrak p}$,
equal to the $p$-rank of the
subgroup of units  of $K_{\mathfrak p}^\times$, thus equal to $p$.

Since $K_{\mathfrak p}(\sqrt[p]{ W_{\mathfrak p}}\,) 
= H_{\mathfrak p} (\sqrt[p]{ W_{\mathfrak p}}\,)  \subseteq M_{\mathfrak 
p}$, this yields as expected:
$${\rm rk}_p ( W_{\mathfrak p} ) = {\rm rk}_p \Big( {\rm Gal\,}
\big(K_{\mathfrak p} (\sqrt[p]{ W_{\mathfrak p}}\, ) / K_{\mathfrak p} 
\big)\Big)\leq p.$$

Returning to the global situation and using Theorem 2.6, we obtain 
directly that:
$${\rm rk}_p( W_{\mathfrak p} ) \leq {\rm rk}_p (W)
\leq   \hbox{$\Frac{p+1}{2}$} + {\rm rk}_p(\Cl) \leq p,$$

\noindent
which is surprising since the global inequality
is obtained via an approximate analytic formula.

So in the local situation we only have the following informations:
$${\rm rk}_p( W_{\mathfrak p} ) \leq  \Frac{p+1}{2} + {\rm rk}_p(\Cl), $$
knowing that (in a ``numerical'' point of view) $W_{\mathfrak p}$ does not 
contain more than $p$ independent elements in  $K_{\mathfrak p}^{\times}/ 
K_{\mathfrak p}^{\times p}$, to be compared with the global situation:
$${\rm rk}_p(W) \leq  \hbox{$\Frac{p+1}{2}$} + {\rm rk}_p(\Cl), $$
knowing that the $p$-rank of $W$ in  $K^{\times}/ 
K^{\times p}$ is only limited by $3(p-1) + \frac{p+1}{2}$.

\medskip
In the two directions (local or global), a contradiction (i.e., a proof 
of FLT) would be obtained by proving the following inequalities:

\smallskip
(i) {\it In the local case}:
$$ {\rm rk}_p(W_{\mathfrak p}) > \hbox{$\Frac{p+1}{2}$}
+ {\rm rk}_p(\Cl) ,$$
under the fact that ${\rm rk}_p(W_{\mathfrak p})$ is 
$p - \delta(p)$, where the defect $\delta(p)$, in the first case of 
FLT, depends essentially of the local properties of
Mirimanoff${}'$s polynomials (see Th.\,3.5 and Th.\,3.9), which gives
the sufficient condition to be proved:
$$\delta(p) < p - \hbox{$\Frac{p+1}{2}$} - {\rm rk}_p(\Cl) =
   \hbox{$\Frac{p-1}{2}$} - {\rm rk}_p(\Cl) ,$$
which is unusable with the analytic inequality ${\rm rk}_p(\Cl) \leq 
\frac{p-1}{2}$ equivalent to $\delta(p) = 0$.\footnote{Note that Mirimanoff${}'$s 
congruences tend to yield a large $\delta(p)$.}

In the second case of FLT, such a proof is also impossible since, as we have 
seen, ${\rm rk}_p(W_{\mathfrak p}) \leq {\rm rk}_p(E_p) = \hbox{$\frac{p+1}{2}$}$.

\smallskip
(ii) {\it In the global case, for the two cases of FLT}:
$${\rm rk}_p (W) >\hbox{$\Frac{p+1}{2}$} + {\rm rk}_p(\Cl)  ,$$
under the fact that ${\rm rk}_p(W)$ is 
$3(p-1) + \hbox{$\frac{p+1}{2}$} - \Delta(p)$,
where the defect $\Delta(p)$ depends on deep 
diophantine properties, which gives the sufficient condition to be proved:
$$ \Delta(p) < 3(p-1) + \hbox{$\Frac{p+1}{2}$}- \hbox{$\Frac{p+1}{2}$}
- {\rm rk}_p(\Cl)   = 3\,(p-1) - {\rm rk}_p(\Cl)  ,$$
realized as soon as $\Delta(p) < 5\, \frac{p-1}{2}$
with the analytic inequality ${\rm rk}_p(\Cl) \leq \frac{p-1}{2}$,
which may be provable.

\begin{rema} (i)
In the previous analysis, one may object that in an evident way, 
global radicals and class groups give equivalent informations (in spite 
of the fact that here we consider generalized classes), but we 
insist on the fact that these radicals, hence the corresponding classes, 
are of a very special nature (see 
for instance Conjecture 1.5, specific of this particular case).

\smallskip
(ii) If we replace the fundamental units $\varepsilon_i$ by the
cyclotomic units, we obtain the radical
$\wt W = \langle\, \zeta, 1 - \zeta^n, a+b\,\zeta^i, b+c\,\zeta^j, 
c+a\,\zeta^k\,\rangle\,.\, K^{\times p}/ K^{\times 
p}$, $\ n, i,j,k=1,\ldots,p-1$,
all the elements being of the special form $x + y\,\zeta^q$.

\smallskip
The radical $\wt W$ is of $p$-rank 
$3(p-1) + \hbox{$\frac{p+1}{2}$} - \wt \Delta(p)$, which
requires to prove that
$\wt \Delta(p) < 3(p-1)-{\rm rk}_p(\Cl)$, with $\wt \Delta(p) \geq \Delta(p)$
because of possible cyclotomic units being $p$th powers of units (defect of Vandiver${}'$s 
conjecture), which seems to be acceptable, even if $\wt \Delta(p)$ is not 
so good, to perform $\wt \Delta(p) < 5\,\frac{p-1}{2}$. \end{rema}

\subsection{Links with other invariants}
Since analytic aspects are important to get good upper bounds, it is 
useful to connect (or replace) the classical class group with other 
invariants. Moreover, a solution of Fermat${}'$s equation has important consequences
on any  arithmetic invariant, as the following ones.

\medskip
(a) {\it Case of the torsion subgroup of ${\rm Gal\,} (H_{\Pl_p}/K)$.}

\smallskip
 Recall that ${\rm Gal\,} (H_{\Pl_p}/K) \simeq
\Cl_{\Pl_p}$  is isomorphic to $\Z_p^{\frac{p+1}{2}}  \oplus {\mathcal T}$,
where ${\mathcal T}$ is the (finite) $p$-torsion subgroup. Thus we get
${\rm rk}_p(W) \leq {\rm rk}_p(\Cl_{\Pl_p}) = 
\hbox{$\frac{p+1}{2}$} + {\rm rk}_p({\mathcal T})$,
giving
${\rm rk}_p({\mathcal T}) \geq  {\rm rk}_p(W) - 
\hbox{$\Frac{p+1}{2}$}$.

If ${\mathcal G}$ is the Galois group of the maximal ${\Pl_p}$-ramified 
pro-$p$-extension of $K$, then the group  ${\mathcal G}$ is defined by  $d$
generators and $r$ relations, where:
\begin{eqnarray*}
   d &=&  {\rm rk}_p({\rm H}^1({\mathcal G}, \Z/p\Z)) = {\rm 
   rk}_p(\Cl_{\Pl_p}) =  \hbox{$\Frac{p+1}{2}$} +{\rm rk}_p({\mathcal T})  , \\
  r &=&  {\rm rk}_p({\rm H}^2({\mathcal G}, \Z/p\Z)),
 \end{eqnarray*}
with the duality ${\rm H}^2({\mathcal G}, \Z/p\Z)^* \simeq {}_p{\mathcal T}$ (see for 
instance [Gr1, App., Th.\,2.2]), giving:
$$ {\rm rk}_p({\rm H}^2({\mathcal G}, \Z/p\Z)) \geq  {\rm rk}_p(W) - 
\hbox{$\Frac{p+1}{2}$} = 3(p-1) - \Delta(p) . $$

One may expect that there
exist some constraints on such cohomology groups.

\smallskip
The field $K$ is said to be $p$-rationnal (see [MN]) if ${\mathcal T}=1$, which is 
equivalent to $\Cl = 1$ ($K$ is $p$-regular in the Kummer sense).

\smallskip
From the reflection theorem (see [Gr2, Th.\,10.10]), we have 
for any $\chi$ with $\chi^* = \omega\,\chi^{-1}$:\,\footnote{For a direct proof,
use the fact that the relative 
component  $\Cl^-_{\Pl_p}$ is the sum of ${\mathcal T}^-$ and of the 
Galois group of the compositum of the
relative $\Z_p$-extensions giving the representation $\Z_p[g]^-$;
the real part $\Cl^+_{\Pl_p}$ is the sum of ${\mathcal T}^+$ and of 
$\Z_p$ with trivial character; so [Gr1, Th.\,II.5.4.5] gives the formula.}
$${\rm rk}_p({\mathcal T}_\chi) = {\rm  rk}_p(\Cl_{\chi^*}). $$

From
the interpretation of the reflection principle for the groups 
$\Cl_{\chi}$ recalled in the Theorem 2.8, (ii)
(see also [Gr1, II.5.4.9.2]), we obtain a similar result between the groups
${\mathcal T}_\chi$ and ${\mathcal T}_{\chi^*}$: 
$${\rm rk}_p((Y/Y_{\rm prim})_{\chi^*}) =
 {\rm rk}_p({\mathcal T}_\chi) - {\rm  rk}_p({\mathcal T}_{\chi^*}) = 
1- {\rm rk}_p((Y/Y_{\rm prim})_{\chi}), $$ for any  even $\chi$, where
 $Y$ is the group of pseudo-units
and $Y_{\rm prim}$ the subgroup of $p$-primary pseudo-units.

\smallskip
Hence, for the group ${\mathcal T}$, the ``\,Vandiver
conjecture\,'' is ${\mathcal T}^{-} = 1$.

\medskip
Let us mention the two relations (equalities up to a $p$-adic unit):
$$\vert\,{\mathcal T}^+\,\vert = \vert\,\Cl^+\,\vert\,.\,{\Frac{\rm 
Reg^+}{\rm Disc^+}}\,\virg\ \ \vert\,{\mathcal T}^-\,\vert =
\Frac{\vert\,\Cl^-\,\vert}{\big(\Z_p \,{\rm log}(I^-) : 
\Z_p \,{\rm log}(P^-) \big)}\,\virg $$
where Reg$^+$ is the $p$-adic regulator, Disc$^+$ the discriminant,
of $K^+$, $I$ the group of ideals prime to $p$, $P$ the subgroup of 
principal ideals, of $K$;
if $c$ is the complex conjugation and 
${\mathfrak a}$ an ideal of $K$,  let $n$ be such that 
${\mathfrak a}^{n\frac{1-c}{2}} = (\alpha)$, then
${\rm log}({\mathfrak a}^{\frac{1-c}{2}}) := \frac{1}{n}{\rm 
log}(\alpha)$ where ${\rm log}$ is the Iwasawa logarithm
for which ${\rm log}(p) = 0$
(note that for the minus part, the units do not enter in the use of ${\rm 
log}$; see [Gr1, Cor.\,III.2.6.1, Rem.\,III.2.6.5] for more details and 
references).

\medskip
 As for the class group, the existence of a solution of Fermat${}'$s
 equation has a great influence on the group ${\mathcal T}$, for 
 instance on the study of the index ${\big(\Z_p \,{\rm log}(I^-) : 
\Z_p \,{\rm log}(P^-) \big)}$ regarding the relations $(x+y\,\zeta) = 
{\mathfrak p} \,{\mathfrak z}_1^p$ or ${\mathfrak z}_1^p$
giving:
$${\rm log}\Big({\mathfrak z}_1^{\frac{1-c}{2}}\Big) := 
\Frac{1-c}{2}\,\Frac{1}{p}{\rm log}(x+y\,\zeta) =
\Frac{1-c}{2}\,\Frac{1}{p}{\rm 
log}\Big(1+\Frac{y}{x+y}\,(\zeta-1)\Big). $$

\medskip
Mention also the following reasoning giving another interpretation of 
a result of Iwasawa [Iw], which
may have some interest\,\footnote{From a talk given
in 1982 in the University Laval, Qu\'ebec; published in the 
mathematical series, N$^{\rm o} 20$ (1984), of the department of mathematics.}:

\smallskip
For an even ${\chi}$, since $\Z_p\,{\rm log}(P^-) = {\rm log}(U^-)$
where $U$ is the group of principal units of $K_{\mathfrak p}$,
we obtain easily:
$$|{\mathcal T}_{\chi^*}| = \frac {|\Cl_{\chi^*}|}
{\big(e_{\chi^*}\,.\,{\Z_p}{\rm log}(I):
e_{\chi^*}\,.\,{\rm log}(U)\big)}\,\cdot$$

The main theorem on cyclotomic fields (see Th.\,2.8, (iii)) gives
$|\Cl_{\chi^*}| = |b_{\chi^*}|_p^{-1}$ (the $p$-part of the 
corresponding generalized Bernoulli number $b_{\chi^*} \in \Z_p$).

\smallskip
We know that for any prime ideal ${\mathfrak l}$ of $K$, ${\mathfrak 
l}\ne {\mathfrak p}$, 
we have:
$${\mathfrak l}^{\,p S} = {\mathcal G}({\mathfrak l})^p\, \Z[\zeta], $$
where $S := \frac{1}{p} \sum_{k=1}^{p-1} k\,s_k^{-1}$
is the Stickelberger element\,\footnote{We have 
${e_{\chi^*}}\,.\,S = b_{\chi^*}\,.\,e_{\chi^*}$;
this explains that we use a different definition from that of [W]
for the generalized Bernoulli numbers.} and ${\mathcal G}({\mathfrak l})$
the Gauss sum:
$${\mathcal G}({\mathfrak l}) := - \sm_{t \in F_{\mathfrak l}} 
\psi(t)\,\zeta_{\ell}^{{\rm tr}(t)}, $$
where $F_{\mathfrak l}$ is the residue field, $\psi$ the canonical 
character of order $p$ of $F_{\mathfrak l}^\times$,
$\zeta_{\ell}$ a primitive $\ell$th root of unity,
and ${\rm tr}$
the trace in the residual extension $F_{\mathfrak l}/\F_\ell$.
Thus taking ${\rm log}$ we obtain for all even $\chi$:
$$e_{\chi^*}\,.\, S \,.\, {\rm log} ({\mathfrak l}) =
e_{\chi^*} \,.\, b_{\chi^*} \,.\,{\rm log}({\mathfrak l}) =
    e_{\chi^*} \,.\, {\rm log}({\mathcal G}({\mathfrak l}) ). $$
    
Then  $|b_{\chi^*}|_p^{-1} \, 
e_{\chi^*}\,.\,{\Z_p}{\rm log}({\mathfrak l}) = e_{\chi^*}\,.\,
{\Z_p} {\rm log}({\mathcal G}({\mathfrak l}))$, thus:
$$|{\mathcal T}_{\chi^*}| = \frac {|b_{\chi^*}|_p^{-1}}
{\Big( \frac{1}{|b_{\chi^*}|_p^{-1}} \, 
e_{\chi^*}\,.\,{\Z_p}{\rm log}\,({\mathcal G}) : e_{\chi^*}\,.\,{\rm 
log}\,(U)\Big)}\, \virg$$
where ${\mathcal G}$ is the group generated by all the Gauss sums ${\mathcal G}({\mathfrak l})$.

\smallskip
So, the Vandiver conjecture for $\chi$ even 
($\Cl_{\chi} = {\mathcal T}_{\chi^*} = 1$) is equivalent to 
the fact that $e_{\chi^*}\,.\,{\Z_p}\,{\rm log}\,({\mathcal G}) =  e_{\chi^*}\,.\,{\rm 
log}(U)$, and the whole Vandiver conjecture is equivalent to 
the fact that the images of the Gauss sums in
$U$ generate the minus part of this $\Z_p$-module.

\medskip
(b) {\it Case of the regular and wild kernels.}

\medskip
Recall the fundamental diagram of ${\rm K}$-theory, in which ${\rm W\!K}_2(K)$
is called the wild kernel and $ {\rm R}_2(K)$ the regular 
kernel in the ordinary sense. We specify the diagram recalled in [Gr1, II.7.6] 
to the case of the cyclotomic field $K$
($h$ is the Hilbert symbol and $h^{\rm reg}$ the regular Hilbert 
 symbol, which is explicit):
%
 %
 $$\begin{array}{ccccccccc} 1 &\too & {\rm W\!K}_2(K) &\tooo & {\rm K}_2(K) &
\stackrel{h}{\tooo} & \plus_{v\in\Pl_{}^{\rm nc}}\mu(K_v) &
\stackrel{\pi}{\tooo} & \mu(K)  \too  1 \\
 & &   \Big \vert \Big \vert   & &
\Big \vert \Big \vert & &\Big \downarrow  
{\raise -3pt \hbox{$\hspace{-2.04mm}\downarrow$}} {\st\ } & &
\hspace{-0.7cm} \Big \downarrow   {\st \ } \vspace{0.2cm}\\
1 & \too &  {\rm R}_2(K)  & \tooo &  {\rm K}_2(K) &
\stackrel{h^{\rm reg}}{\tooo} &
\ \ \,\plus_{v\in\Pl_{}^{\rm nc}} \mu(K_v)^{\rm reg} &\tooo & 
\hspace{-0.7cm} 1 \,, \\
\end{array}  $$

since $({\rm R}_2: {\rm W\!K}_2) = 1$ for $K = \Q(\zeta)$
(use [Gr1, II.7.6.1]).

\medskip
 For ${\rm R}_2$ we have a Kummer interpretation,
coming from  results of Tate  [Ta], which is given
by the exact sequence:
$$1 \too \mu_p \otimes  
{\rm N}_2 \tooo \mu_p\otimes  W_{\Pl_p} \mathop{\tooo}^f
{}_{p}\hspace{-1.2pt}{\rm R}_2 \too 1, $$
where $W_{\Pl_p}$ is the initial radical of $H_{\Pl_p}/K$, $f$ being
defined by  $f(\zeta \otimes \alpha) := \{\zeta\,,\,\alpha\} $
for all  $\alpha\in W_{\Pl_p}$, and where
${\rm N}_2 := \{ \alpha\in K^\times,\ \{\zeta\,,\,\alpha \} = 1 \}/ K^{\times p}$
(Tate${}'$s kernel) is such that (as $g$-modules):
$$\mu_p \otimes {\rm N}_2 \simeq (\mu_p \otimes \mu_p) \oplus 
\mu_p^{\frac{p-1}{2}}. $$

\smallskip
We then have ${\rm rk}_p({\rm R}_2) = {\rm rk}_p( W_{\Pl_p})
- \frac{p+1}{2} = {\rm rk}_p(\Cl)$
(see [Gr1, II.7.7.2.2]).
More precisely, using characters, we have here another principle of reflection, since 
we must associate $\chi$ with $\ov \chi := \omega^{-1} \chi = 
(\chi^*)^{-1}$, giving for all~$\chi$:
$${\rm rk}_p({\rm R}_{2,\,\chi}) = {\rm  rk}_p(\Cl_{\omega^{-1} \chi})=
{\rm  rk}_p({\mathcal T}_{{\omega^2 \chi}^{-1}}). $$

As for the group ${\mathcal T}$, we get for any  even $\chi$:
 $${\rm rk}_p((Y/Y_{\rm prim})_{\chi^*}) =
 {\rm  rk}_p({\rm R}_{2 ,\,\omega^2\chi^{-1}}) -
  {\rm rk}_p({\rm R}_{2 ,\,\omega\chi}) = 
1 - {\rm rk}_p((Y/Y_{\rm prim})_{\chi}), $$ and
 ``\,Vandiver${}'$s conjecture\,'' for $R_2$  is $R_2^{-} = 1$.

This can be deduced from the above exact sequence 
 by proving that the groups 
$\langle \,\zeta\,\rangle \tensorZp \Cl$ and ${}_p{\rm R}_2$ are isomorphic
 $g$-modules, which is coherent with the above reflection. Another 
proof uses the isomorphism proved by  Jaulent [J] between 
${\rm W\!K}_2/ ({\rm W\!K}_2)^p$ and
$\langle \,\zeta\,\rangle \tensorZp \wt \Cl$, where
 $\wt {\Cl}$ is the logarithmic $p$-class group,
 and the isomorphism  $\wt {\Cl} \simeq \Cl$ for $K$ (see [Gr1, Exer. III.7.1]).
 
 \smallskip
 The field $K$ is said to be $p$-regular (in the ${\rm K}$-theory 
 sense) if the $p$-Sylow of the 
 regular kernel $R_2$ is trivial (see [JN, GJ]); here it is the case if and only if $\Cl=1$.
 
\smallskip
We have here a complete parallelism between regular kernel and class 
group (with another Galois action), which may be interesting by studying for instance the map $f$ 
on the elements $x+y\,\zeta$ of the radical $W \subseteq  W_{\Pl_p}$, and so on.

\smallskip
We know that ${\rm R}^+_2 := {\rm R}_2(K^+)$ is given by the value at $-1$ of the 
Dedekind zeta function $\zeta^{}_{K^+}$ of $K^+$; more precisely, after the proof of 
Birch--Tate conjecture by  Wiles (on this subject, see e.g. Greither [Gre]) we get:
$$|\,{\rm R}_2^+ \,| = \hbox{$\Frac{24\, p}{2^{\frac{p-1}{2}}}$} \, \, 
\big|\,\zeta^{}_{K^+}(-1)\,\big| $$
(see Washington${}'$s book [Wa, Ch.\,IV] to compute the analytic  expression
of $|\,{\rm R}_2^+ \,|$). For the minus part $|\,{\rm R}_2^- \,|$, we don$'$t know 
convenient analytic formula as for $|\,{\mathcal T}^-\,|$;
we only have the isomorphism
${}_p{\rm R}_2^- \simeq (\langle \,\zeta\,\rangle \tensorZp \Cl)^-$.

\section{Some classical local considerations revisited (first case of 
FLT)}

To study the $p$-rank of the radical $W$ we begin with the partial 
radical $W_c$, in the first case of FLT, or the radical generated by
$W_c$ and the units.

\medskip
Thus in this Section we 
suppose that $p\notdiv c$; so we will have similar results by 
permutations of $\{ a,\,b,\,c \}$ with no more global informations
as explained in Section 2; moreover, since
$a+ b \,\zeta = \zeta\, (b + a \,\zeta^{-1})$,
the radical $W$ contains the conjugates of $b + a \,\zeta^{-1}$ and 
we can add  the transpositions of the set $\{ a,\,b,\,c \}$, so 
that the reasonings (in the first case of 
FLT) are valid for any $(x,\,y) \in 
\{(a,\,b),\, (b,\,a),\, (b,\,c),\, (c,\,b),\, (c,\,a),\, (a,\,c) \}$.

\subsection{Logarithmic derivative: Mirimanoff${\,}'\!$s polynomials}
We need, {\it once for all}, a convenient characterization of $p$-primarity;
the best way is to use the method of derivation of Eichler. 
Everything depends on this.

\medskip
From a solution $(a,b,c)$ in the first case of Fermat${}'$s equation, we 
study the relation:
\zero
$$\prd_{i=1}^{p-1} (a+b\,\zeta^i)^{\lambda_i} = \alpha^{\,p}, \ \, \lambda_i\in 
\{0,\ldots, p-1\}, \ \alpha \in \Z[\zeta].\ \ntm$$
\initnt
\ntt{Without any change, we can study the 
same relation in $\Z_p[\zeta]$ instead of $\Z[\zeta]$; in that case, 
we will obtain a N.S.C. (see Th.\,3.5).}

Since $a+b$ is a $p$th power (Lem.\,1.1), it is equivalent to consider, 
for $e := \frac{b}{a+b}$:
$$\prd_{i=1}^{p-1} (1+e \,(\zeta^i-1))^{\lambda_i} = \beta^{\,p}, \ \, \lambda_i\in 
\{0,\ldots, p-1\}, \ \beta \in \Z_{(p)}[\zeta]. $$
This relation is equivalent to the polynomial relation:
$$F(X) := \prd_{i=1}^{p-1} (1+e \,(X^i-1))^{\lambda_i} = G(X)^p + 
A(X) \, \Phi_p(X),\ \,G,\,A \in \Z_{(p)}[X], $$
where $\Phi_p(X)$ is the $p$th cyclotomic polynomial.

\begin{lemm} We can choose $G(X)$ modulo $\Phi_p(X)$ such that:
$$F(X) = \prd_{i=1}^{p-1} (1+e \,(X^i-1))^{\lambda_i} = H(X)^p + 
B(X)\,(X^p - 1),\ \,H,\,B \in \Z_{(p)}[X]. $$\end{lemm}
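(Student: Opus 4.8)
The plan is to read the hypothesis $F = G^p + A\,\Phi_p$ as the congruence $F \equiv G^p \pmod{\Phi_p(X)}$ and to upgrade it to a congruence modulo $X^p-1 = (X-1)\,\Phi_p(X)$. The key structural observation is that, although $X-1$ and $\Phi_p$ do \emph{not} generate the unit ideal of $\Z_{(p)}[X]$, they are nonetheless coprime in the UFD $\Z_{(p)}[X]$: indeed $X-1$ is irreducible and does not divide $\Phi_p$ because $\Phi_p(1)=p\ne 0$, so $\gcd(X-1,\Phi_p)=1$. Hence for any $P\in\Z_{(p)}[X]$ the two divisibilities $(X-1)\mid P$ and $\Phi_p\mid P$ together force $(X-1)\Phi_p = X^p-1 \mid P$. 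So it suffices to produce $H\in\Z_{(p)}[X]$ with $H\equiv G \pmod{\Phi_p}$ (which preserves the known congruence, since then $H^p\equiv G^p\equiv F \pmod{\Phi_p}$) and with the one extra property $(X-1)\mid (F-H^p)$, i.e. $F(1)=H(1)^p$.

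Next I would compute the two relevant evaluations at $X=1$. On one hand $F(1)=\prod_{i=1}^{p-1}\bigl(1+e(1-1)\bigr)^{\lambda_i}=1$. On the other hand, evaluating the hypothesis at $X=1$ gives $1 = G(1)^p + A(1)\,\Phi_p(1) = G(1)^p + p\,A(1)$, so $G(1)^p\equiv 1 \pmod p$ and therefore $G(1)\equiv 1\pmod p$ in $\Z_{(p)}$ (Fermat in $\F_p$, using that $p$ is odd). In particular the element $c:=\frac{1-G(1)}{p}$ lies in $\Z_{(p)}$.

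With this in hand the construction is immediate: set $H:=G+c\,\Phi_p\in\Z_{(p)}[X]$, which has the same residue modulo $\Phi_p$, so $H^p\equiv G^p\equiv F\pmod{\Phi_p}$; moreover $H(1)=G(1)+c\,\Phi_p(1)=G(1)+c\,p=1$, whence $H(1)^p=1=F(1)$ and $(X-1)\mid(F-H^p)$. By the coprimality step above, $X^p-1\mid(F-H^p)$; since $X^p-1$ is monic, the quotient $B:=(F-H^p)/(X^p-1)$ lies in $\Z_{(p)}[X]$, giving $F=H^p+B(X)(X^p-1)$ as desired. (Here $e=\frac{b}{a+b}\in\Z_{(p)}$ because $a+b=c_0^p$ is prime to $p$ in the first case by Lemma~1.1, so $F,G,A,H,B$ all genuinely have coefficients in $\Z_{(p)}$.)

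I expect the only genuinely delicate point to be this coprimality argument, namely distinguishing \emph{coprime in the UFD $\Z_{(p)}[X]$} (which holds and is exactly what legitimizes passing from ``$(X-1)\mid P$ and $\Phi_p\mid P$'' to ``$X^p-1\mid P$'') from \emph{generating the unit ideal} (which fails: modulo $p$ one has $\Phi_p\equiv (X-1)^{p-1}$, so $X-1$ and $\Phi_p$ share a factor mod $p$ and their resultant is $\Phi_p(1)=p$, ruling out a naive CRT splitting). Everything else — the two evaluations, the choice of the constant $c$, and the division by the monic polynomial $X^p-1$ — is routine.
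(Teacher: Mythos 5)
Your proof is correct and follows essentially the same route as the paper: you construct the very same polynomial $H = G - \frac{G(1)-1}{p}\,\Phi_p$ after observing $F(1)=1$ and $G(1)\equiv 1 \bmod p$. The only cosmetic difference is at the end, where the paper factors $A_1(X)=(X-1)B(X)$ from $A_1(1)=0$ while you invoke coprimality of $X-1$ and $\Phi_p$ in the UFD $\Z_{(p)}[X]$; both are valid.
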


\begin{proof}Since $F(1) = 1$, $G(1)^p + A(1)\, p = 1$, thus 
$G(1)^p \equiv G(1) \equiv 1 \bmod (p)$ and  $G(1) = 1 + \Lambda\, p$,
$\Lambda \in \Z_{(p)}$.
Put $G_1(X) := G(X) - \Lambda \,\Phi_p(X)$; this yields to
$G_1(1) = G(1) - \Lambda\, p = 1$.

We have $F(X) = G_1(X)^p + A_1(X) \,\Phi_p(X)$ for some $A_1(X)$.

We obtain
$F(1) = 1 = G_1(1)^p + A_1(1)\, p = 1 + A_1(1)\, p$, in other words $A_1(1) = 0$.
Thus $A_1(X) = (X-1)\, B(X)$. We then put $H(X) :=G_1(X)$.\end{proof}

By logarithmic derivation, since  $e \not\equiv 0 \bmod (p)$ in the first case of 
FLT and since $F(X)$ is invertible modulo $(p, X^p-1)$, this gives:
$$ \hspace{2.8cm} \sm_{i=1}^{p-1}\Frac{\lambda_i \, i\, X^{i-1}}{1+e\,(X^i-1)} \in (p, 
X^p-1)_{\Z_{(p)}[[X]]}.  \hspace{2.8cm}(1)$$

\begin{rema} From this formula we deduce (taking $X=1$) the 
necessary condition  $\ \sum_{i=1}^{p-1} \lambda_i\,i \equiv 0 \bmod (p)$,
which gives one nontrivial relation between the~$\lambda_i$.
This relation is due to an obstruction on the $\omega$-component
(see Rem.\,3.4 ).

The interest of Lemma 3.1 is that $(p,X^p-1)' \subseteq (p,X^p-1)$.
\end{rema}

The series $\Frac{1}{1+e\,(X^i-1)} = \sm_{j\geq 0} (-1)^j\, e^j\, (X^i-1)^j$
are convergent for the $(X-1)$-adic topology and, since $(X^i-1)^p \in (p, 
X^p-1) = (p, (X-1)^p)$, we obtain, after multiplication by $X$,
the equivalent condition:
$$ \sm_{i=1}^{p-1} \lambda_i \, i\, X^{i}\, \sm_{j=0}^{p-1}(-1)^j\, e^j\, (X^i-1)^j \in (p, (X-1)^p). $$
Thus, using $(X^i-1)^j = \sm_{k\geq 0}(-1)^{j-k}\, \big(^j_k\big)\, X^{ik}, \ \ 
\hbox{\,with $\big(^j_k\big) = 0\,$ for $k> j$}$,
this yields:
$$\sm_{k\geq 0} \,\, \sm_{i=1}^{p-1} \lambda_i \, i\, X^{i(k+1)}\,.\, (-1)^k\,  \sm_{j= 
k}^{p-1}\big(^j_k\big)\, e^j \in  (p, (X-1)^p). $$
Since $j \leq p-1$ and $\big(^j_k\big) = 0$ for $k> j$, we can limit 
$k$ to the value $p-1$; for $k=p-1$ we get the term
$\ \sm_{i=1}^{p-1} \lambda_i \, i\, X^{ip} \,e^{p-1} \equiv 
\sm_{i=1}^{p-1} \lambda_i\,i \equiv 0 \bmod (p, X^p-1)$.

Then, under the condition $\sum_{i=1}^{p-1} \lambda_i\,i \equiv 0 \bmod 
(p)$, we can suppose that $k$ varies from 0 to $p-2$.
Put:
$$\varphi_{k+1} (X) := \sm_{i=1}^{p-1} \lambda_i\,  i\, X^{i(k+1)} ,\ \ 
k=0,\ldots, p-2.$$
We obtain the following condition (2), equivalent to (1) under the 
condition $\sum_{i=1}^{p-1} \lambda_i\,i \equiv 0 \bmod (p)$:
$$\hspace{1.0cm}\sm_{k= 0}^{p-2} \varphi_{k+1} (X)\,.\, A_k \in  (p, (X-1)^p), \, \ {\rm 
with}\ A_k := (-1)^k  \sm_{j=k}^{p-1} \big(^j_k\big)\, e^j. 
\hspace{1.0cm}(2) $$

\begin{lemm} We have $A_k \equiv (-1)^k \,\frac{e^k}{k !}\, D^{(k)}(e)
\equiv \big(\frac{-b}{a}\big)^k \bmod (p)$, $k=0,\ldots,p-2$,
where $D(Y) := 1+Y+\ldots+Y^{p-1}$.\end{lemm}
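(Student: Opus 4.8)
We must show that the coefficients
$$A_k := (-1)^k \sm_{j=k}^{p-1} \binom{j}{k}\, e^j$$
satisfy the chain of congruences $A_k \equiv (-1)^k\,\frac{e^k}{k!}\,D^{(k)}(e) \equiv \big(\frac{-b}{a}\big)^k \bmod (p)$, where $D(Y) = 1+Y+\cdots+Y^{p-1}$ and $e = \frac{b}{a+b}$.

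**Plan.** The plan is to recognize $A_k$ as a Taylor coefficient of the geometric-type polynomial $D(Y)$ and then evaluate at $Y=e$. First I would note the elementary identity $\frac{d^k}{dY^k}\, Y^j = \frac{j!}{(j-k)!}\,Y^{j-k} = k!\,\binom{j}{k}\,Y^{j-k}$, so that differentiating $D(Y) = \sm_{j=0}^{p-1} Y^j$ exactly $k$ times and then multiplying by $Y^k$ gives
$$Y^k\, D^{(k)}(Y) = k!\,\sm_{j=k}^{p-1}\binom{j}{k}\, Y^{j}.$$
Evaluating at $Y=e$ and comparing with the definition of $A_k$ yields immediately $A_k = (-1)^k\,\frac{e^k}{k!}\,D^{(k)}(e)$ --- this is an exact polynomial identity over $\Z_{(p)}$, not merely a congruence, so the first claimed relation holds with equality (the $k!$ in the denominator is harmless since $k \leq p-2 < p$, so $k!$ is a $p$-adic unit).

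**The second congruence.** For the reduction to $\big(\frac{-b}{a}\big)^k$, the key is that $D(Y) = \frac{Y^p - 1}{Y-1}$ as a rational function, and I would differentiate this closed form rather than the sum. Writing $D(Y)(Y-1) = Y^p - 1$ and applying the Leibniz rule $k$ times gives
$$D^{(k)}(Y)\,(Y-1) + k\, D^{(k-1)}(Y) = \frac{d^k}{dY^k}(Y^p-1) = \frac{p!}{(p-k)!}\, Y^{p-k}.$$
Modulo $p$ the right-hand side vanishes for $1 \leq k \leq p-1$ (since $p \mid \frac{p!}{(p-k)!}$ in that range), which produces the recursion $D^{(k)}(Y)\,(Y-1) \equiv -k\,D^{(k-1)}(Y) \bmod (p)$. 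Evaluating at $Y=e$ and using $e-1 = \frac{b}{a+b} - 1 = \frac{-a}{a+b}$, an easy induction starting from $D^{(0)}(e) = D(e)$ then gives the desired power of $\frac{-b}{a}$; the factor $(e-1)^{-1} = -\frac{a+b}{a}$ combines with the $e^k/k!$ prefactor so that each step contributes exactly $\frac{e}{e-1} = \frac{b/(a+b)}{-a/(a+b)} = \frac{-b}{a}$, and the $k!$'s cancel against those generated by the recursion.

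**Main obstacle.** The only genuinely delicate point is tracking the base case of the induction, namely the value $D(e) \bmod p$. Here one uses $D(e) = \frac{e^p-1}{e-1}$ together with the fact that $e^p \equiv e \bmod p$ in $\Z_{(p)}$ (Fermat's little theorem for the numerator of $e$), which forces $e^p - 1 \equiv e - 1$, hence $D(e) \equiv 1 \bmod (p)$ after dividing by the unit $e-1 = \frac{-a}{a+b}$ (a $p$-adic unit since $p \nmid a$ in the first case). With $D(e) \equiv 1$ and the recursion in hand, the induction closes and delivers $A_k \equiv \big(\frac{-b}{a}\big)^k \bmod (p)$ for all $k = 0,\ldots,p-2$, as required.
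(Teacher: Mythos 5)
Your argument is correct and structurally identical to the paper's: the first congruence is, as you observe, the exact identity $Y^k D^{(k)}(Y) = k!\sum_{j=k}^{p-1}\binom{j}{k}Y^j$ evaluated at $Y=e$ (with $k!$ a $p$-adic unit for $k\le p-2$), and the second reduces to establishing $D^{(k)}(e)\equiv(-1)^k\,k!\,(e-1)^{-k}\bmod (p)$ together with $\frac{e}{e-1}=\frac{-b}{a}$. The only divergence is cosmetic and concerns how that last congruence is obtained: the paper differentiates the mod-$p$ closed form $D(Y)\equiv(Y-1)^{p-1}\bmod p\,\Z[Y]$ directly, whereas you reach the same formula by a Leibniz recursion on $D(Y)(Y-1)=Y^p-1$ plus the base case $D(e)\equiv 1$; both are sound, and your tracking of the units $e-1=\frac{-a}{a+b}$ (using $p\nmid a$ in the first case) is exactly the point that needs checking.
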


\begin{proof} We have  $A_0 = D(e) = \frac{e^p-1}{e-1} \equiv 1 \bmod 
(p)$ since $e\not\equiv 1 \bmod (p)$ (otherwise $a\equiv 0 \bmod (p)$).
The first general relation giving $A_k$ is immediate by induction, using
$\big(^j_k\big)= \frac{j!}{k!\,(j-k)!}\,$ and
$\,D^{(k)}(Y) = \frac{k!}{0!} +  \frac{(k+1)!}{1!}\,Y + \ldots + 
\frac{(k+p-1-k)!}{(p-1-k)!} \, Y^{p-1-k}$.

\smallskip\smallskip
Since $D(Y) = 1+Y+\ldots+ Y^{p-1} = \frac{Y^p-1}{Y-1}
\equiv (Y-1)^{p-1} \bmod p\,\Z[Y]$, we have 
$D^{(k)} (e) \equiv  (p-1)\ldots(p-k)\,.\,(e-1)^{p-1-k}
 \equiv (-1)^{k}\, k!\, (e-1)^{-k} \bmod (p)$.

Then $A_{k} \equiv (-1)^{k}\, \frac{e^{k} }{k!} \,
D^{(k)} (e) \equiv \big (\frac{e}{e-1}\big)^{k}\!\!\!
= \big (\frac{-b}{a}\big)^{k} \!\bmod (p)$,
hence the result.\end{proof}

We intend to use this formula
in the case of the action of the 
idempotents $e_\chi \in \Z_p[g]$, $\chi = \omega^m$ (where $g = {\rm 
Gal\,}(K/\Q)$) on the previous pseudo-unit $1+e\,(\zeta-1)$
(see Not.\,2.7).

\medskip
 The formulation of the condition $F(X) = H(X)^p + 
B(X)(X^p - 1)$ corresponds to the 
 choice $\lambda_i \equiv \frac{1}{p-1}\omega^{-m}(i)$ modulo $p\,\Z_p[\zeta]$;
 the necessary condition
 $\sum_{i=1}^{p-1} \lambda_i\,i \equiv 0 \bmod (p)$ (see Rem.\,3.2) is satisfied for 
 any $m \in \{ 1,\ldots, p-1\}$, except $m=1$ (i.e., $\chi = \omega$).
 
 For $m \ne 1$ we obtain from the  above:
 $$\varphi_{k+1} (\zeta) = \hbox{$\frac{1}{p-1}$}
 \sm_{i=1}^{p-1} \omega^{-m}(i)\, i\, \zeta^{i(k+1)}
 \equiv \hbox{$\frac{1}{p-1}$}
 \sm_{i=1}^{p-1} \omega^{1-m}(i) \,\zeta^{i(k+1)}\bmod (p) ,$$
$$\sm_{k= 0}^{p-2} \varphi_{k+1} (\zeta)\,.\, A_k =
\sm_{k= 1}^{p-1} \varphi_{k} (\zeta)\,.\, A_{k-1} \equiv 
\hbox{$\frac{1}{p-1}$}\sm_{k= 1}^{p-1} \Big(\sm_{i=1}^{p-1} \omega^{1-m}(i)\, \zeta^{ik}\Big)\,.\,
A_{k-1} \bmod (p).$$

We have obtained the necessary condition (put $j := i\,k$ modulo $p$):
$$-\sm_{k= 1}^{p-1}\sm_{ j= 1}^{p-1} \omega^{1-m}(j k^{-1})\, \zeta^{j}\,.\, A_{k-1}
    = \Big( \sm_{k= 1}^{p-1} \omega^{m-1}( k) \,.\, A_{k-1}\Big)
\Big( - \sm_{j=1}^{p-1}\omega^{1-m}(j)\, \zeta^{j}\Big) \equiv 0 \bmod (p),$$
where:
$$ -\sm_{j=1}^{p-1} \omega^{1-m}(j)\, \zeta^{j} =: \tau(\omega^{1-m}), $$
is the Gauss sum of $\omega^{1-m}$, for which:
$$\tau (\omega^{1-m})\,.\,\ov\tau (\omega^{m-1}) = p,\ \ {\rm where}\ \ 
\ov\tau (\varphi) := -\sm_{k=1}^{p-1}\varphi(k) \zeta^{-k}
= \varphi(-1)\,\tau (\varphi)$$
for any character $\varphi$.

\smallskip
But $\tau(\omega^{1-m})$, as 
element of $\Z_p[\zeta]$, is of ${\mathfrak p}$-valuation
$m-1$,  $\,m \in\{1,\ldots,p-1\}$ (see Prop.\,3.17 in Subsec. 3.4).
The final necessary condition is thus, for $m\ne 1$:
$$\hspace{3.3cm} \sm_{k= 1}^{p-1} \omega^{m-1}( k) \,.\, A_{k-1} \equiv 0 \bmod 
p\,\Z_p[\zeta].\hspace{3.1cm} (3)$$

\begin{rema} \label{remarque6} For $m=1$ (i.e., $\chi = \omega$) a direct computation gives:
 \begin{eqnarray*}
     (1+e \,(\zeta -1))^{e_\omega} &=& \prd_{i=1}^{p-1} 
     \big(1+e \, (\zeta^i-1) \big)^{\frac{\omega^{-1}(i)}{p-1}} 
 \equiv  \prd_{i=1}^{p-1}  \big (1+   e \,\omega^{-1}(i)\,
(\zeta^i-1) \big)^{\frac{1}{p-1}} \\
  &\equiv&  \prd_{i=1}^{p-1}  \big (1+ e\, (\zeta-1) \big)^{\frac{1}{p-1}}
 \equiv   1+ e\, (\zeta-1) \bmod {\mathfrak p}^2,
 \end{eqnarray*}
using $\zeta^i -1 = \frac{\zeta^i -1}{\zeta -1}\,(\zeta -1)
\equiv i\,(\zeta -1) \bmod {\mathfrak p}^2$.\end{rema}
     
\begin{theo} Let $(a,b,c)$ be a solution in the first case of 
Fermat${\,}'\!$s equation; put $e = \frac{b}{a+b}$.
Let $\chi = \omega^m$ be a $p$-adic character of $g$ distinct from $\omega$.

\smallskip\noindent
Then the pseudo-unit $(a+b\,\zeta)^{e_\chi}$ or
$(1+e \,(\zeta-1))^{e_\chi}$ is a  $p$th power in $K_{\mathfrak p}$
if and only if: 
$$\sm_{k= 1}^{p-1} \omega^{m-1}(k)  
\hbox{$\big(\frac{-b}{a}\big)^{k}$} \equiv 0 \bmod (p). $$ \end{theo}

\begin{proof} We have to prove the sufficiency of the condition.
We note that this congruential condition is (for $\chi \ne \omega$)
only equivalent to $F'(X) \in (p,\Phi_p(X)) = (p,(X-1)^{p-1})$
in $\Z_p[X]$, since $(X-1)\Phi_p(X) =X^p-1
\equiv (X-1)^{p} \bmod (p)$ (see (1), (2), (3)).

\smallskip
Suppose that  the condition
$F'(X) \in (p,(X-1)^{p-1})$ is satisfied
for the coef\-ficients $\lambda_i = \frac{1}{p-1}\chi^{-1}(i)$
in  $F(X) = \prod_{i=1}^{p-1} (1+e\,(X^i-1))^{\lambda_i}$. 

\smallskip
Write $F(X) = \sum_{n=0}^{p-1} u_n\, (X-1)^n + U(X)\,(X-1)^{p}$ in 
$\Z_p[X]$; since:
$$F'(X) = \sm_{n=1}^{p-1}n\, u_n (X-1)^{n-1}\! + 
p\,U(X)\,(X-1)^{p-1} + U'(X)\,(X-1)^{p}$$
is in $(p,(X-1)^{p-1})$, 
this yields to $u_n  \equiv 0 \bmod (p)$ for $n=1,\ldots, p-1$.

\smallskip
Then $F(\zeta) \equiv u_0 \bmod (p)$;
from Theorem 2.2, $F(\zeta)$ being a pseudo-unit congruent to 
a rational modulo $p$ is a local $p$th power.
 Which proves the theorem obtained by Thaine [Th3] using 
generalized binomial computations.\end{proof}

\begin{rema} (i) We have obtained that in our viewpoint using radicals, the $p$-primarity
of the pseudo-unit $(1+e \,(\zeta-1))^{e_\chi}$, $\chi 
\ne \omega$, is directly characterized by means of the polynomial:
$$M_m(Z) := \sm_{k= 1}^{p-1} \omega^{m-1}( k) Z^k. $$
As the reader can see, this polynomial 
is a variant of the classical polynomial of Mirimanoff
$\wt M_m(Z) := \sm_{k= 1}^{p-1} k^{m-1} Z^k$ and
is congruent modulo $p$ to it (see [R, VIII.1] for more information;
see [A1] for the use of  Mirimanoff${}'$s polynomials in 
Iwasawa theory over $K$; see [Th2, I] for the definition of
polynomial congruences equivalent to Mirimanoff${}'$s congruences
and giving a direct proof of some Wieferich${}'$s criteria). 

\smallskip\smallskip
(ii) We see that $ M_m(Z)$ comes from the Gauss sum
$\tau(\omega^{m-1})$ that we have encountered before (put $Z = \zeta$),
and this has probably a deep signification (see Subsec. 3.4 for 
some insights).

\smallskip
This shows that this indexation is not convenient; we observe that
$M_m(Z)$ must be denoted $M_{\chi^*}(Z)$, where $\chi^* = \omega \chi^{-1} = 
\omega^{1-m}$, and more generally $M_\varphi(Z) :=
\sum_{k= 1}^{p-1} \varphi^{-1}(k) \,Z^k$ for any character $\varphi$. 

\smallskip
Thus, to summarize:
\vspace{-0.2cm}
 \begin{eqnarray*}
 M_{\chi^*}(Z) &:=& \sm_{k= 1}^{p-1}(\chi^*)^{-1} (k) \,Z^k \\
M_{\chi^*}(\zeta) &=& \sm_{k= 1}^{p-1}(\chi^*)^{-1} (k) \,\zeta^k
= - \tau((\chi^*)^{-1}); 
\end{eqnarray*}
for convenience, we will use the two notations, the rule being
$M_{\omega^h} = M_{p-h}$.

\medskip
We see also that by all
permutations of $a$, $b$, $c$, the $p$-primarity
of the corres\-ponding pseudo-units $(x + y\,\zeta)^{e_\chi}$, $\chi\ne 
\omega$ (i.e., $\chi^*\ne \chi_0$),
is equivalent to the congruence:
$$M_{\chi^*} \hbox{$\Big(\Frac{-y}{x}\Big)$} = \sm_{k= 1}^{p-1}(\chi^*)^{-1}(k)  
\hbox{$\Big(\Frac{-y}{x}\Big)^{k}$} \equiv 0 \bmod (p). $$
This notation which associates $\chi$ (for $(x + y\,\zeta)^{e_\chi}$)
and $\chi^*$ (for $M_{\chi^*} \big(\frac{-y}{x}\big)$)
anticipates the use of reflection theorems.

\smallskip
(iii) The  advantage of this definition of Mirimanoff${}'$s polynomials,
indexed by the characters of $g$, is that they may be related to
characters of some subfields of $K$, giving a more precise
information (use Th.\,2.8, (i)), and the knowledge of
the  $p$-class groups of the subfields may have suitable consequences for
the properties of these polynomials (e.g. $\chi = 
\omega^{\frac{p-1}{2}}$, $\chi^* = \omega^{\frac{p+1}{2}}$).\end{rema}

\begin{theo}[algebraic form of Kummer${}'$s congruences] Let $(a,b,c)$
 be a solution in the first case of Fermat${\,}'\!$s equation.

\smallskip
If for an odd character $\chi \ne \omega$, $M_{\chi^*}\big(\frac{-b}{a} 
\big) \not\equiv 0 \bmod (p)$ (where $\chi^* = \omega \chi^{-1}$), then  the 
$\chi$-component $\Cl_\chi:= \Cl^{e_\chi}$ of the $p$-class 
group is nontrivial.\end{theo}

\begin{proof}
We have $(1+e\,(\zeta-1))^{e_\chi} 
\notin K^{\times p}$ since this pseudo-unit is not a local $p$th power at~${\mathfrak p}$.
Put $(1+e \,(\zeta-1))^{e_\chi}\,\Z_{(p)}[\zeta] = {\mathfrak z}^p$; if the ideal ${\mathfrak z}$
is principal, say ${\mathfrak z} = (z)$, then:  
$$(1+e \,(\zeta-1))^{e_\chi} = \varepsilon\, z^p, \ \, {\rm where}\  \varepsilon \in 
E_\chi := E^{e_\chi} ; $$
since  $\chi$ is odd and distinct from $\omega$ (the character of 
$\langle\,\zeta\,\rangle$), $\varepsilon = 1$, giving a global
$p$th power for $(1+e \,(\zeta-1))^{e_\chi}$ (contradiction). Thus 
$\cl({\mathfrak z}) \in \Cl_\chi$ 
 is nontrivial.\end{proof}

From the main theorem on cyclotomic fields (see Th.\,2.8, (iii))), the 
$p$-valuation of $\vert\,\Cl_\chi\,\vert$ is that
of the genera\-lized Bernoulli number: 
$$b_{\chi} := \hbox{$\frac{1}{p}$} \sm_{k=1}^{p-1} \chi^{-1} (k) k; $$
so $b_{\chi} \equiv 0 \bmod (p)$ or,
equivalentely since $\chi = \omega^m$,
$m \ne 1$ odd, the ordinary  Bernoulli number $B_{p-m}$
is congruent to 0 modulo $p$ (see [W, Cor.\,5.15]). 

\smallskip
Actually  Stickelberger${}'$s theorem is 
sufficient to get $b_{\chi} \equiv 0 \bmod (p)$; if we want the 
reciprocal of  ``\,Herbrand${}'$s theorem\,'', we can use [Ri], [Th4] to 
get that $b_{\chi} \equiv 0 \bmod (p)$ is equivalent to $\Cl_\chi 
\ne 1$. 
 
 \smallskip
 We find again in a more precise way the classical situation
 of Kummer${}'$s congruences which are:
 $$b_{\chi} \,\cdot\,M_{\chi^*} \hbox{$\big(\frac{-b}{a} \big)\equiv 0 \bmod 
 (p)$}. $$
  If $M_{\chi^*}\big(\frac{-b}{a} \big) 
 \not\equiv 0 \bmod (p)$, then  $\Cl_\chi \ne 1$ and
  for $\chi^*$ (even and nontrivial), we know by reflection
  (see Exa.\,2.9) that
 the $\chi^*$-cyclotomic unit $\eta_{\chi^*}:= (1 - \zeta)^{e_{\chi^*}}$
 is a {\it local} $p$th power at $\mathfrak p$.
 It is a global $p$th power if and only if $\Cl_{\chi^*}$ is nontrivial
 (Vandiver${}'$s conjecture false at $\chi^*$).
 
\begin{rema} 
 If $\chi \ne \chi_0$ is even, if $M_{\chi^*}\big(\frac{-b}{a} 
\big) \not\equiv 0 \bmod (p)$, and if the ideal ${\mathfrak z}$
is principal (in the wtiting  $(1+e \,(\zeta-1))^{e_\chi}\,\Z[\zeta] = 
{\mathfrak z}^p$), we only obtain the relation
$(1+e \,(\zeta-1))^{e_\chi} = \varepsilon_\chi \, z^p$,
 where  $\varepsilon_\chi \in 
E_\chi$ is not a local $p$th power at $\mathfrak p$.

The basic example for this is $\Cl_{\chi^*} = 1$, thus 
$\Cl_{\chi} = 1$ (Vandiver${}'$s conjecture true at $\chi$); we then have 
$b_{\chi^*} \not\equiv 0 \bmod (p)$ thus 
$M_{\chi} \hbox{$\big(\frac{-b}{a}\big)$}\equiv 0 \bmod (p)$, 
which implies that $(1+e \,(\zeta-1))^{e_{\chi^*}}$ is a global $p$th power
since $\Cl_{\chi} = 1$.

If $\mathfrak z$ is nonprincipal, then $\Cl_{\chi} \ne 1$
(counterexample to Vandiver${}'$s conjecture), $\Cl_{\chi^*} \ne 1$,
$b_{\chi^*} \equiv 0$ $\bmod\, (p)$, and 
$M_{\chi} \hbox{$\big(\frac{-b}{a}\big)$}$ is a priori arbitrary
(see Rem.\,3.11 for improvements of these reasonings).\end{rema}

\begin{theo}[algebraic form of Mirimanoff${}'$s  congruences: 
the reflection theorem]
    Let $\chi \ne \chi_0$ be even, and let  $\chi^* = 
\omega \chi^{-1}$ ($\chi^*$ is odd distinct from $\omega$).

\smallskip
Then we have $M_{\chi^*}\big(\frac{-y}{x} 
\big)\,.\,M_{\chi}\big(\frac{-y}{x} \big) \equiv 0 \bmod (p)$
for any of the six pairs $(x,y)$ corresponding to a solution in the 
first case of Fermat${}'$s equation.\end{theo}

\begin{proof} To prove this congruence, we suppose that both
$M_{\chi^*}\big(\frac{-y}{x} \big)$ and
$M_{\chi}\big(\frac{-y}{x} \big)$ are not congruent to 0 modulo $p$ to 
obtain a contradiction.

\smallskip
From the Theorem 2.8, (ii), or [Gr1, II.5.4.9.2], the analysis of the reflection 
theorem in $K$ leads to the following equalities ($\chi$ even):
$${\rm rk}_p((Y/Y_{\rm prim})_{\chi^*}) =
{\rm rk}_p(\Cl_{\chi^*}) -  {\rm rk}_p(\Cl_{\chi})=
1 - {\rm rk}_p((Y/Y_{\rm prim})_{\chi}), $$

where $Y$ is the group of  pseudo-units of $K$,
and where $Y_{\rm prim}$ is the subgroup of $p$-primary pseudo-units.

\smallskip
The condition $M_{\chi}\big(\frac{-y}{x} \big) \not\equiv 0 \bmod (p)$ 
is thus equivalent to $(x+y\,\zeta)^{e_{\chi^*}} \in Y\Sauf Y_{\rm prim}$
giving ${\rm rk}_p((Y/Y_{\rm prim})_{\chi^*}) = 1$, and similarly
the condition $M_{\chi^*}\big(\frac{-y}{x} \big) \not\equiv 0$ $\bmod 
\,(p)$ is
equivalent to $(x+y\,\zeta)^{e_{\chi}} \in Y\Sauf Y_{\rm prim}$, 
giving ${\rm rk}_p((Y/Y_{\rm prim})_{\chi}) = 1$ (contradiction).\end{proof}

\begin{coro}  Let $\chi \ne \chi_0$
(i.e., $\chi^* \ne \omega$) be even. Suppose that
 $\Cl_{\chi}$ is trivial (Vandiver${}'$s conjecture true at $\chi$).
 
 \smallskip\noindent
 If $M_{\chi}\big(\frac{-y}{x} 
\big) \not\equiv 0 \bmod (p)$, then $M_{\chi^*}\big(\frac{-y}{x} 
\big) \equiv 0 \bmod (p)$ and the 
fundamental ${\chi}$-unit $\varepsilon_{\chi}$ is $p$-primary
as well as the $\chi$-cyclotomic unit $\eta_{\chi} := (1-\zeta)^{e_\chi}$.\end{coro}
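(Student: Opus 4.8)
The plan is to obtain all three conclusions from the reflection identity of Theorem 2.8, (ii), once the non-vanishing hypothesis on $M_{\chi}$ is converted into a failure of $p$-primarity. First I would fix the character bookkeeping: since $\chi$ is even with $\chi\ne\chi_0$, the reflected character $\chi^*=\omega\chi^{-1}$ is odd and $\ne\omega$, and one has $(\chi^*)^*=\chi$. Applying the equivalence of Remark 3.7, (ii) (equivalently Theorem 3.5) to the pseudo-unit $(x+y\,\zeta)^{e_{\chi^*}}$ shows that its $p$-primarity is equivalent to $M_{(\chi^*)^*}\big(\frac{-y}{x}\big)=M_{\chi}\big(\frac{-y}{x}\big)\equiv 0\bmod (p)$. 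Hence the hypothesis $M_{\chi}\big(\frac{-y}{x}\big)\not\equiv 0\bmod (p)$ says precisely that $(x+y\,\zeta)^{e_{\chi^*}}$ is a pseudo-unit which is \emph{not} $p$-primary; in particular it is nontrivial in $Y/Y_{\rm prim}$, so ${\rm rk}_p((Y/Y_{\rm prim})_{\chi^*})=1$.

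Next I would invoke the reflection theorem. For the even character $\chi$, Theorem 2.8, (ii) gives the outer equality
$${\rm rk}_p((Y/Y_{\rm prim})_{\chi^*}) = 1 - {\rm rk}_p((Y/Y_{\rm prim})_{\chi}),$$
so the value $1$ just obtained forces ${\rm rk}_p((Y/Y_{\rm prim})_{\chi})=0$. This is the crux: it says that $(Y/Y_{\rm prim})_{\chi}$ is trivial, i.e. \emph{every} $\chi$-pseudo-unit of $K$ is $p$-primary. From here the three assertions follow at once. Applying Remark 3.7, (ii) once more, now to $(x+y\,\zeta)^{e_{\chi}}$, its $p$-primarity is equivalent to $M_{\chi^*}\big(\frac{-y}{x}\big)\equiv 0\bmod (p)$, which is the first claim (this also drops out directly from Theorem 3.9 together with $M_{\chi}\big(\frac{-y}{x}\big)\not\equiv 0$). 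Since units are pseudo-units, the fundamental $\chi$-unit $\varepsilon_{\chi}\in E_{\chi}\subseteq Y_{\chi}$, and the cyclotomic unit $\eta_{\chi}=(1-\zeta)^{e_{\chi}}\in E_{\chi}\subseteq Y_{\chi}$ (for $\chi\ne\chi_0$ it is genuinely a unit, cf. its treatment in Theorem 2.8, (iii)); as both lie in $Y_{\chi}$, the triviality of $(Y/Y_{\rm prim})_{\chi}$ makes them $p$-primary.

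I expect no serious obstacle, since Theorems 3.5, 3.9 and the reflection formula of Theorem 2.8, (ii) carry the whole argument; the only points needing care are the bookkeeping $(\chi^*)^*=\chi$ and the verification that $\varepsilon_{\chi}$ and $\eta_{\chi}$ really lie in $Y_{\chi}$, so that the rank-zero statement applies to them. This reproduces exactly the second alternative analysed in Example 2.9. Finally, the Vandiver hypothesis $\Cl_{\chi}=1$ is worth a remark: the $p$-primarity conclusions above use only the outer equality and do not strictly require it. It enters if one instead argues through the class group, using Theorem 3.8 for the odd character $\chi^*$ to deduce $\Cl_{\chi^*}\ne 1$ from $M_{\chi}\big(\frac{-y}{x}\big)\not\equiv 0$; then $\Cl_{\chi}=1$ together with the middle equality ${\rm rk}_p(\Cl_{\chi^*})-{\rm rk}_p(\Cl_{\chi})=1-{\rm rk}_p((Y/Y_{\rm prim})_{\chi})$ pins down ${\rm rk}_p(\Cl_{\chi^*})=1$ and again ${\rm rk}_p((Y/Y_{\rm prim})_{\chi})=0$. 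Either route lands on the vanishing of $(Y/Y_{\rm prim})_{\chi}$.
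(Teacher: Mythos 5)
Your proof is correct, and it takes a recognizably different route from the one the paper gives for this corollary. The paper argues through the class group: from the hypothesis and Theorem 3.7 (applied to the odd character $\chi^*$) it gets $\Cl_{\chi^*}\ne 1$, uses $\Cl_{\chi}=1$ to pin ${\rm rk}_p(\Cl_{\chi^*})=1$, then invokes Kummer duality (Theorem 2.8, (i)) to identify the radical of the resulting unramified $\chi^*$-extension with $\langle\varepsilon_{\chi}\rangle$ --- whence $\varepsilon_{\chi}$ is $p$-primary --- and finally uses the Vandiver hypothesis (via Theorem 2.8, (iii)) to transfer $p$-primarity to $\eta_{\chi}$, which generates $E_\chi$ modulo $p$th powers. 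You instead stay entirely on the pseudo-unit side: the hypothesis makes $(x+y\,\zeta)^{e_{\chi^*}}$ a non-$p$-primary $\chi^*$-pseudo-unit, the outer equality of Theorem 2.8, (ii) then forces $(Y/Y_{\rm prim})_{\chi}=0$, and all three conclusions drop out because $(x+y\,\zeta)^{e_{\chi}}$, $\varepsilon_{\chi}$ and $\eta_{\chi}$ all lie in $Y_{\chi}$. This is exactly the ``second alternative'' of Example 2.9 and the content of Remark 3.11, (i), and your observation that the $p$-primarity conclusions do not actually need $\Cl_{\chi}=1$ is right and is confirmed by that remark. What each approach buys: yours is more economical and shows the Vandiver hypothesis is inessential for the $p$-primarity statements; the paper's, via Kummer duality, yields the sharper facts that ${\rm rk}_p(\Cl_{\chi^*})=1$ and that $K(\sqrt[p]{\varepsilon_{\chi}}\,)$ \emph{is} the unramified $\chi^*$-extension, not merely that $\varepsilon_\chi$ is $p$-primary. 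Two trivia: the equivalence you quote is Remark 3.6, (ii) (not 3.7), and the ``algebraic Kummer congruence'' theorem is 3.7 (not 3.8); the content you attribute to them is correct.
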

 
\begin{proof} From  $M_{\chi}\big(\frac{-y}{x} 
\big) \not\equiv 0 \bmod (p)$ and Theorem 3.7
we get that $\Cl_{\chi^*}$ is 
of $p$-rank $\geq$~$1$, hence equal to 1 since $\Cl_{\chi}=1$;
so by Kummer duality (see Th.\,2.8, (i)), the radical of the 
corresponding unramified ${\chi^*}$-extension of $K$ is given by the 
fundamental ${\chi}$-unit $\varepsilon_{\chi}$ which is thus 
$p$-primary.
By hypothesis, $E_\chi$ is also generated by
the $\chi$-cyclotomic unit $\eta_{\chi}$.
This is the result obtained in [Th2, II] via  congruential computations.\end{proof}

So, Mirimanoff${}'$s congruences, obtained by ugly computations, are nothing 
but the reflection principle in class field theory.

\begin{rema} Let $\chi$ be even distinct from $\chi_0$.

\smallskip
 (i) If $M_{\chi}\big(\frac{-y}{x} \big) \not\equiv 0 \bmod (p)$, then 
 from the proof of Theorem 3.9 we have
${\rm rk}_p((Y/Y_{\rm prim})_{\chi^*}) = 1$,
${\rm rk}_p((Y/Y_{\rm prim})_{\chi}) = 0$
(all the $\chi$-pseudo-units 
are $p$-primary, especially $\varepsilon_\chi$), and for the class 
group we get
${\rm rk}_p(\Cl_{\chi}) + 1 =  {\rm rk}_p(\Cl_{\chi^*})$, which 
means that the $\chi^*$-class group is nontrivial.
Then $(x+y\,\zeta)^{e_{\chi}}$ is $p$-primary
(which is coherent with $M_{\chi^*}\big(\frac{-y}{x} \big) \equiv 0 \bmod (p)$)
but can be a global $p$th power.

\smallskip
(ii) If $M_{\chi^*}\big(\frac{-y}{x} \big) \not\equiv 0 \bmod (p)$,
${\rm rk}_p((Y/Y_{\rm prim})_{\chi}) = 1$, ${\rm rk}_p((Y/Y_{\rm 
prim})_{\chi^*}) =0$  (all the $\chi^*$-pseudo-units 
are $p$-primary), and
${\rm rk}_p(\Cl_{\chi^*}) = {\rm rk}_p(\Cl_{\chi})$.

\smallskip
(iii) If $M_{\chi^*}\big(\frac{-y}{x} \big) \equiv
M_{\chi}\big(\frac{-y}{x} \big) \equiv 0 \bmod (p)$, then
 $(x+y\,\zeta)^{e_{\chi}}$ and 
$(x+y\,\zeta)^{e_{\chi^*}}$ are $p$-primary, but we dont know if they are 
global $p$th powers or not; if for instance $(x+y\,\zeta)^{e_{\chi}} = z^p$
then the ideal ${\mathfrak c}_1^{e_{\chi}}$ is principal. If
$(x+y\,\zeta)^{e_{\chi}}$ is not of the form $\varepsilon_\chi\, z^p$,
${\mathfrak c}_1^{e_{\chi}}$ is not principal (the $\chi$-class group is nontrivial), and
 $(x+y\,\zeta)^{e_{\chi}}$ defines the 
 radical of a $\chi^*$-unramified extension of $K$ (the $\chi^*$-class 
 group is of course nontrivial).
 
 If $(x+y\,\zeta)^{e_{\chi^*}}$ is not a $p$th power, 
 ${\mathfrak c}_1^{e_{\chi^*}}$ is nonprincipal (because $E_{\chi^*} = 
 1$) and defines the radical of a $\chi$-unramified extension of $K$, 
 giving $\Cl_{\chi} \ne 1$ (Vandiver${}'$s conjecture false at $\chi$), hence also
 $\Cl_{\chi^*} \ne 1$.
 
 \smallskip
(iv) If  $\Cl_{\chi^*}$ is trivial,
then the unit $\varepsilon_\chi$ is not $p$-primary and all the 
$\chi^*$-pseudo-units are $p$-primary (hence global $p$th powers); then
we get $M_{\chi}\big(\frac{-y}{x} \big) \equiv 0 \bmod (p)$.

\smallskip
(v)  For $\chi = \chi_0$, we know that  $\Cl_{\chi^*}=\Cl_\omega$ is
trivial; in this case, $M_{\chi_0}\big(\frac{-y}{x} \big) = 
\sum_{k= 1}^{p-1} \big(\frac{-y}{x} \big)^k$
takes always the value 0 for $\frac{-y}{x}\not\equiv 1\bmod (p)$.

\smallskip\smallskip
 For  $\chi = \chi_0$,  $\Cl_\chi$ is trivial and in this case
 we obtain the supplementary Mirimanoff congruence:
\vspace{-0.2cm}
$$M_{\chi^*}\Big(\Frac{-y}{x} \Big)
= M_{\omega}\Big(\Frac{-y}{x} \Big) = 
\sm_{k= 1}^{p-1}\omega^{-1}(k) \Big(\Frac{-y}{x} \Big)^k \equiv 0 \bmod 
(p)$$
since it corresponds to the $p$-primarity of
$\No_{K/\Q} (x+y\,\zeta)
= z_1^p$.\end{rema}

\subsection{Derivation technics: the method of Eichler}
We begin with a particular case of this method to analyze a global 
approach to the computation of the $p$-rank of the radicals
$W_a$, $W_b$, $W_c$, and $W$.

We consider the necessary condition of the previous subsection, concerning 
the first case of FLT, to have
$\prod_{i=1}^{p-1} (1 + e\,(\zeta^i - 1))^{\lambda_i} \in 
K^{\times p}$, for $e:=\frac{b}{a+b}$:
$$ \sm_{i=1}^{p-1}\Frac{\lambda_i \, i\, X^{i-1}}{1+e(X^i-1)} \in (p, X^p-1). $$
The trick is to suppose that the support $S$ of the set of integers
$\lambda_i$ (that is the set of indices $i$ such that
$\lambda_i \not\equiv 0 \bmod (p)$)
is not too big in the expression:
$$\sm_{i\in S}\lambda_i \, i\, X^{i-1} \prd_{j\in S, j \ne i} (1-e + e\,X^j) 
\in (p, X^p-1), $$

\noindent
so that there is no reduction by $X^p-1$ in the computation of 
the products: 
$$X^{i-1}\prd_{j\in S, j \ne i}(1-e + e\,X^j) ,\ \,\hbox{for}\ i\in S .$$
For this, the condition is that $i-1 + \sm_{j\in S, j \ne i}j < p$,
equivalent to $\sm_{i\in S}i \leq p$.
If we suppose that  $S \subseteq \{1, 2, \ldots, \rho := [\sqrt{2p}\, - 0.5] \}$, the 
condition is satisfied.

\smallskip
We thus have the congruence:
$$\sm_{i\in S}\lambda_i \, i\, X^{i-1} \prd_{j\in S, j \ne i} (1-e + e\,X^j) 
\equiv 0 \bmod p\,\Z_{(p)}[X]. $$
The (unique) term of minimal degree is obtained for the minimal
value $i_0$ of $i\in S$ and gives $\lambda_{i_0}\, .\, (1-e)^{\rho-1}\equiv 0 
\bmod (p)$, then  $\lambda_{i_0}\equiv 0 \bmod (p)$ (contradiction).
We have obtained:

\begin{theo} Let $(a,b,c)$ be a solution in the first case of 
Fermat${}'$s equation.

\smallskip
Then each of the three radicals
$W_a =\langle\,b+c\,\zeta^j\,\rangle.K^{\times p}/K^{\times p}$,
$\ W_b =\langle\,c+a\,\zeta^k\,\rangle.K^{\times p}/K^{\times p}$,
$\ W_c =\langle\,a+b\,\zeta^i\,\rangle.K^{\times p}/K^{\times p}$,
$\ j, k, i =1,\ldots,p-1$, is of $p$-rank at 
least $\rho := [\sqrt{2p}\, - 0.5]$.

Same conclusion replacing $K$ by $K_{\mathfrak p}$ (local radicals).
\end{theo}

But as is always the case, the conclusion 
of the proof is of a local nature.

\begin{rema} (i) The monogenic $\F_p[g]$-module $W_c$ generated by 
$a+b\,\zeta$ defines a subrepresentation of the regular one; thus there 
exist at least $\rho$ distinct characters $\chi$ such that 
$(a+b\,\zeta)^{e_\chi}$ is not a global (or local) $p$th power.
   
\smallskip
 (ii)   Let  $(x_i,y_i) \in
\{(a, b), (b, c), (c, a)\}$,  $i = 1,\ldots,\rho$;
then by the same method it is easy to 
prove that the pseudo-units $x_i + y_i\,\zeta^i$ are independent in 
$K^{\times}/K^{\times p}$, giving by conjugation many subradicals in $W$ of 
$p$-rank $\rho$.\end{rema}

Now we give a variant of the theorem of Eichler from a solution $(a,b,c)$ 
in the first case of the Fermat equation.
 We study the relation, where $e := \frac{b}{a+b}$ 
 (still for the support $S$ of the $\lambda_i$):
$$\prd_{i\in S}\Big(\Frac{a+ b \zeta^{-i}}{a+ b \zeta^{i}}\Big)^{\lambda_i}  =
\prd_{i\in S}\Big(\Frac{ (1+e \,(\zeta^{-i}-1))}{(1+e \,(\zeta^{i}-1))}\Big)^{\lambda_i}  = 
\beta^p, \ \, \beta \in \Z_{(p)}[\zeta]. $$

Put $(a+ b \,\zeta^{i})\,\Z[\zeta] = {\mathfrak c}_i^{\,p}$
and $(a+ b \,\zeta^{-i})\,\Z[\zeta] = \ov{\mathfrak c}_i^{\,p}$. 
From the above relation we deduce:
$$\prd_{i\in S}\Big( \Frac{\ov {\mathfrak c}_i}{{\mathfrak c}_i}\Big)^{\lambda_i}
= (\beta)\,\Z_{(p)}[\zeta]. $$

Reciprocally, any relation of principality
$\prd_{i\in S}\Big( \Frac{\ov {\mathfrak c}_i}{{\mathfrak c}_i}\Big)^{\lambda_i}
= (\beta')\,\Z_{(p)}[\zeta]$
gives:
$$\prd_{i\in S}\Big(\Frac{(1+e \,(\zeta^{-i}-1))}{(1+e \,(\zeta^{i}-1))}\Big)^{\lambda_i}  = 
\zeta^h \, \varepsilon^+\,\beta'{}^p, \ \, \varepsilon^+\in E^+,\, \, 
h \geq 0 ;$$
we suppose that $\sum_{i\in S}\, \lambda_i\, i \equiv 0 \bmod (p)$; this 
implies easily $h=0$. Then the relative norm 
$\No_{K/K^+}(\varepsilon^+\,\beta'{}^p)$
 must be 1, so that $(\varepsilon^+)^2\,\No_{K/K^+}(\beta'{}^p)=1$
 giving $\varepsilon^+ = (\eta^+)^p$ for a real unit $\eta^+$;
 this yields the first relation with $\beta = \eta^+\,\beta'$.
 
\medskip
Write $(1+e \,(\zeta^{-i}-1))^{\lambda_i} = \zeta^{-\lambda_i\,i}
(e + (1-e)\,\zeta^i))^{\lambda_i}$.
The first relation is thus equivalent to the relation (reutilizing by 
abuse the same notations for $F$, $H$, $B$ in $\Z_{(p)}[[X]]$):
$$F(X) := \prd_{i\in S}(e+ (1-e) X^{i})^{\lambda_i} 
\,(1+e(X^{i}-1))^{-\lambda_i} = H(X)^p + B(X)(X^p - 1), $$
giving by logarithmic derivation, $F$ being invertible modulo $(p, 
X^p-1)$:
$$ (1-e) \sm_{i\in S} \Frac {\lambda_i \,i\, 
X^{i-1}}{e+(1-e)X^i}  - e \sm_{i\in S}
\Frac{\lambda_i \,i\, X^{i-1}}{1+e(X^i-1)}  \in (p, X^p-1), $$
and finally:
$$ (1-2 e) \sm_{i\in S}\Frac {\lambda_i \,i\, 
X^{i-1}}{(e+(1-e)X^i)(1+e(X^i-1))}  \in (p, X^p-1). $$
If $2e \equiv 1 \bmod (p)$ we get $a\equiv b \bmod (p)$ and by 
circular permutations, the analogous congruences would give $a \equiv b \equiv 
c \bmod (p)$, thus $0 \equiv a+b+c \equiv 3 a \bmod (p)$ (absurd for 
$p>3$); so 
we may suppose that $2e \not\equiv 1 \bmod (p)$.

\smallskip
As before we obtain:
$$\sm_{i\in S} \lambda_i \,i\, 
X^{i-1}\, \prd_{j \in S, j\ne i} (e+(1-e)X^j)(1 - e + e \, X^j)  \in (p, X^p-1). $$

If $\ 2\sm_{j\in S} j \leq p+1$ there is no 
reduction modulo $X^p-1$ in the computation of this expression.

\smallskip
Then, for  $S \subseteq \{1,\ldots, [\sqrt {p+1}\, - 0.5] \}$,
the (unique) term of minimal degree is obtained for the minimum
$i_0$ of $S$, giving 
immediately $\lambda_{i_0} \equiv 0 \bmod (p)$ (contradiction).

\medskip
Since the classes $\cl({\ov {\mathfrak c}_i} \,.\,{{\mathfrak 
c}_i}^{-1})$ are relative classes, we have proved (taking 
in account that we have imposed a relation on the $\lambda_i$):

\begin{theo}[Eichler${}'$s theorem]  Let $p>2$ be prime.
If the $p$-rank of the relative class group of 
$K$ satisfies ${\rm rk}_p(\Cl^-) \leq \rho' := [\sqrt {p+1}\, - 
1.5]$
then the first case of FLT holds for
the prime $p$.\,\footnote{Since $\cl\big(\frac{\ov {\mathfrak c}_i}{{\mathfrak 
c}_i}\big) = s_i . \cl\big(\frac{\ov {\mathfrak c}_1}{{\mathfrak 
c}_1}\big)$, $i \in S$, the monogenic $g$-module generated by
$\cl\big(\frac{\ov {\mathfrak c}_1}{{\mathfrak 
c}_1}\big)$ contains the $\cl\big(\frac{\ov {\mathfrak c}_i}{{\mathfrak 
c}_i}\big)$ and is contained in the regular representation $\F_p[g]$;
this means that at least $\rho'$ different characters $\chi$
give a nontrivial $\Cl_\chi$ and
the statement is true with the index of irregularity $i(p)$
instead of ${\rm rk}_p(\Cl^-)$.}\end{theo}

\subsection{Some other $p$-adic technics}
Now we consider the Dwork uniformizing parameter
$\varpi$ in $K_{\mathfrak p}$ which has the following characteristic properties (see 
e.g. [Gr1, Exer. II.1.8.3]):

\smallskip
(i) $\varpi^{p-1} = -p$,

\smallskip
(ii) $s_k(\varpi) = \omega(k) \varpi$, $k=1,\ldots,p-1$.

\medskip
In the following lemma we suppose that $1+e \,(\zeta-1)$ is a 
pseudo-unit, so that $p$-primarity and local $p$th power
property are equivalent (see Lem.\,2.1, Th.\,2.2).
We compute in $\Z_p [\zeta] =\Z_p [\varpi]$.
     
\begin{lemm}  Let $\chi = \omega^m$, $m\in \{ 1,\ldots,p-1 \}$; then 
for $e \not \equiv 0 \bmod (p)$ we have the relation $(1+e \,(\zeta-1))^{e_\chi} =1+ \varpi^m 
\varphi_\chi$, where $\varphi_\chi \in \Z_p [\varpi]$.

\smallskip\noindent
Then  $(1+e \,(\zeta-1))^{e_\chi}$
is a local $p$th power if and only if  $\varphi_\chi \equiv
0 \bmod (\varpi)$. \end{lemm}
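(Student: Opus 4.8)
The plan is to work entirely inside the discrete valuation ring $\Z_p[\zeta]=\Z_p[\varpi]$, whose maximal ideal is $\mathfrak p=(\zeta-1)=(\varpi)$, and to exploit that $\varpi$ carries the character $\omega$. Write $U:=1+\mathfrak p\,\Z_p[\zeta]$ for the group of principal units of $K_{\mathfrak p}$ and $U^{(n)}:=1+\mathfrak p^{\,n}$ for its standard filtration; each graded quotient $U^{(n)}/U^{(n+1)}\simeq \mathfrak p^{\,n}/\mathfrak p^{\,n+1}$ is one--dimensional over $\F_p$, and since $s_k(\varpi)=\omega(k)\varpi$ it is spanned by the class of $\varpi^{\,n}$, hence affords the character $\omega^{n}$ of $g$. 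Now $1+e\,(\zeta-1)\in U$, and $e_\chi$ is an idempotent of $\Z_p[g]$ acting on the pro-$p$ group $U$, so $(1+e\,(\zeta-1))^{e_\chi}$ lies in the $\chi$-eigenspace $U_\chi:=U^{e_\chi}$. Because $|g|=p-1$ is prime to $p$, the filtration splits into eigencomponents, and by the remark above the only nonzero graded pieces of $U_\chi$ occur at levels $n\equiv m \pmod{p-1}$; the smallest such $n\geq 1$ is $n=m$. Climbing the filtration from level $1$ therefore gives $U_\chi\subseteq U^{(m)}=1+\varpi^{\,m}\Z_p[\varpi]$, and writing the element as $1+\varpi^{\,m}\varphi_\chi$ forces $\varphi_\chi\in\Z_p[\varpi]$. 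This already yields the first assertion, for every $m$.

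Next I would establish the $p$th power criterion, which is where the Dwork parameter does the real work. Assume $\chi\ne\omega$ (the case used in the sequel: $\mu_p$ sits in the $\omega$-eigenspace and is absent here, so $U_\chi\simeq\Z_p$ is procyclic). The decisive point is that the $p$th-power map advances the $\mathfrak p$-adic valuation $v_{\mathfrak p}$ by exactly $p-1$ on $U^{(n)}$ for $n\geq 2$: if $v_{\mathfrak p}(\alpha)=n$ then in $(1+\alpha)^p-1=p\,\alpha+\binom{p}{2}\alpha^2+\cdots+\alpha^p$ the term $p\,\alpha$ has valuation $(p-1)+n$, strictly smaller than that of every other term once $n\geq 2$, so $v_{\mathfrak p}((1+\alpha)^p-1)=(p-1)+n$. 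Consequently a generator of $U_\chi$, sitting at level $m$, has its $p$th power at level $m+(p-1)$; and since $U_\chi$ has no graded piece at any level strictly between $m$ and $m+(p-1)$, the subgroup of $p$th powers is $(U_\chi)^p=U_\chi\cap U^{(m+p-1)}=U_\chi\cap U^{(m+1)}$.

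Finally I would reduce ``local $p$th power'' to ``$p$th power in $U_\chi$''. If $\beta^p$ lies in the principal units, then decomposing $\beta=\varpi^{\,a}\,\omega_0\,u$ with $a\in\Z$, $\omega_0\in\mu_{p-1}$ (the Teichm\"uller part, the residue field being $\F_p$) and $u\in U$ forces $a=0$ and $\omega_0=1$ because $\mu_{p-1}$ has order prime to $p$; so $\beta\in U$, and applying $e_\chi$ shows $\beta^p=(\beta^{e_\chi})^p$ with $\beta^{e_\chi}\in U_\chi$. Hence $(1+e\,(\zeta-1))^{e_\chi}$ is a local $p$th power if and only if it lies in $(U_\chi)^p=U_\chi\cap U^{(m+1)}$, i.e. if and only if $\varpi^{\,m}\varphi_\chi\in\mathfrak p^{\,m+1}$, i.e. if and only if $\varphi_\chi\equiv 0 \bmod (\varpi)$, as claimed.

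The main obstacle is the coincidence isolated in the second paragraph: one must know both that the $\chi$-eigenspace of principal units is procyclic with graded pieces spaced exactly $p-1$ apart, and that the $p$th-power map advances the valuation by precisely $p-1$, so that ``the next admissible level up'' is simultaneously $U^{(m+1)}$ and $U^{(m+p-1)}$. It is this matching that collapses the $p$th-power test to the single congruence modulo $\varpi$. The character $\chi=\omega$ (that is $m=1$) is genuinely exceptional, since then $\mu_p\subset U_\chi$ and the valuation shift can fail through cancellation; but this character is excluded in Theorem~3.5, and Remark~3.4 already records that there $\varphi_\omega\not\equiv 0 \bmod (\varpi)$.
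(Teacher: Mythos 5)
Your proof is correct, with the understanding (shared by the paper, whose remark immediately after the lemma excludes $m=1$) that the "if and only if" is only asserted for $\chi\ne\omega$, i.e. $m\ge 2$. The first half is essentially the paper's argument in structural clothing: the paper applies $e_\chi$ to $1+\varpi^n v_0$ and observes that $\sum_j \omega^{n-m}(j)$ vanishes unless $n\equiv m \bmod\,(p-1)$, which is exactly your statement that the graded piece $U^{(n)}/U^{(n+1)}$ affords the character $\omega^n$. The second half is where you genuinely diverge. The paper invokes the congruential characterization of $p$-primarity for pseudo-units (Lemma 2.1 / Theorem 2.2: congruence to a $p$th power modulo ${\mathfrak p}^{p}$), yielding the condition $\varphi_\chi\equiv 0\bmod (\varpi^{p-m-1})$, and then bootstraps by re-applying $e_\chi$ to climb from level $m+1$ up to $m+p-1$. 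You instead compute $(U_\chi)^p$ directly, using that $U_\chi$ is procyclic with generator at level $m\ge 2$ and that the $p$th-power map shifts the valuation by exactly $p-1$ on $U^{(n)}$ for $n\ge 2$, so that $(U_\chi)^p=U_\chi\cap U^{(m+1)}$. Your route is purely local and never uses the hypothesis that $1+e\,(\zeta-1)$ is a pseudo-unit (the paper's appeal to Theorem 2.2 does), and it isolates cleanly why $m=1$ is exceptional ($\mu_p\subset U_\omega$ and the valuation-shift computation fails at level $1$); the paper's route stays within the congruential formalism it uses throughout Section 3. The one step you assert without full justification is the procyclicity of $U_\chi$ for $\chi\ne\omega$: torsion-freeness alone is not enough, and you should add a word, e.g. that $\log$ gives a $g$-isomorphism $U^{(2)}\simeq{\mathfrak p}^{2}$ in which every character of $g$ occurs with multiplicity one, whence $U_\chi=U^{(2)}_\chi\simeq\Z_p$.
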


\begin{proof} Suppose that $(1+e \,(\zeta-1))^{e_\chi} =1+ \varpi^n v$, 
where $v$ is a unit of  $K_{\mathfrak p}$ and $n \geq 1$; put
$v \equiv v_0 \bmod (\varpi)$, $v_0 \in \Z \Sauf p\,\Z$.

Applying $e_\chi$ we have:
 \begin{eqnarray*}
 && (1+e \,(\zeta-1))^{e_\chi} \equiv 
 (1+ \varpi^n v_0)^{e_\chi} \equiv
 1 + e_\chi (\varpi^n v_0) \\
 &\equiv&  1 +  \hbox{$\frac{1}{p-1}$} 
\sm_{j=1}^{p-1} \omega^{-m} (j)\, s_j (\varpi^n v_0) 
 \equiv  1 + \hbox{$\frac{1}{p-1}$} 
\sm_{j=1}^{p-1} \omega^{-m} (j) \,\omega^{n} (j) \,\varpi^n v_0 \\
&\equiv&
   1 + \hbox{$\frac{\varpi^n v_0}{p-1}$}
\sm_{j=1}^{p-1} \omega^{n-m} (j) \equiv 1+ \varpi^n v \ \, \bmod 
(\varpi^{n+1}),
 \end{eqnarray*}
which is absurd except if $n \equiv m \bmod (p-1)$. Thus $(1+e \,(\zeta-1))^{e_\chi}
= 1 + \varpi^m \,\varphi_\chi$.

\smallskip
If $m=p-1$, we know that the norm of such a pseudo-unit is of the form $n^p$
with $n\equiv 1 \bmod (p)$, hence
$(1+e \,(\zeta-1))^{e_{\chi_0}}\equiv
1 \bmod (p^2)$, proving the lemma in this case; suppose $m<p-1$.

\smallskip
The $p$th power condition is $\varphi_\chi \equiv 0 \bmod 
(\varpi^{p-m-1})$ (apply Th.\,2.2), with $p-m-1>0$.

Suppose that  $\varphi_\chi \equiv 0 \bmod (\varpi)$; then  we get
$(1+e \,(\zeta-1))^{e_\chi} = 1+\varpi^{m+1} \varphi'_\chi$, for 
$\varphi'_\chi \in \Z_p[\varpi]$.
 Then applying again the idempotent 
$e_\chi$, the first part of the proof gives
$\varphi'_\chi \equiv 0 \bmod (\varpi)$, then inductively the result
up to  $(1+e \,(\zeta-1))^{e_\chi} \in 1+(\varpi^{m+p-1})$.\end{proof}
 
 The value $m=1$ does not work here since we know
 that $(1+e \,(\zeta-1))^{e_\omega} \equiv 1 + e \,\varpi$ $\bmod 
\  (\varpi^2)$ (see Rem.\,3.4) and since we have supposed $p\notdiv e$.
     
\begin{coro} Write $\ {\rm log}\,(1+e \,(\zeta-1)) =
e_2 \,\varpi^2+ \ldots + e_{p-1}\, \varpi^{p-1}$,
$e_i \in \Z_p$. Then the set of characters $\chi = \omega^m$,
$m \in \{2,\ldots,p-1 \}$, such that $(1+e \,(\zeta-1))^{e_\chi}$
is $p$-primary, is $\{ m \in \{2,\ldots,p-1 \},\ e_m \equiv 0 \bmod (p) \}$.\end{coro}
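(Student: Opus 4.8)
The plan is to extract the $p$-primarity criterion of Lemma 3.16 directly from the $p$-adic logarithm, exploiting that by property (ii) of the Dwork parameter each monomial $\varpi^i$ is an eigenvector for the Galois action. First I would note that $1+e\,(\zeta-1)$ lies in $1+{\mathfrak p}$, so its logarithm converges, and that ${\rm log}$ is a Galois-equivariant homomorphism on principal units; since $\chi^{-1}(k)/(p-1)\in\Z_p$, raising to the idempotent $e_\chi\in\Z_p[g]$ is defined and
$$ {\rm log}\big((1+e\,(\zeta-1))^{e_\chi}\big)=e_\chi\,.\,{\rm log}(1+e\,(\zeta-1)). $$
From $s_k(\varpi^i)=\omega^i(k)\,\varpi^i$ the monomial $\varpi^i$ spans the $\omega^i$-eigenspace; applying $e_{\omega^m}$ to the expansion ${\rm log}(1+e\,(\zeta-1))=\sum_{i=2}^{p-1}e_i\,\varpi^i$ and using orthogonality of characters (with $i,m\in\{2,\ldots,p-1\}$ forcing $i\equiv m \bmod (p-1)$ to mean $i=m$) annihilates every term but $i=m$, giving $e_\chi\,.\,{\rm log}(1+e\,(\zeta-1))=e_m\,\varpi^m$.

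Next I would invoke Lemma 3.16, which writes $(1+e\,(\zeta-1))^{e_\chi}=1+\varpi^m\varphi_\chi$ with $\varphi_\chi\in\Z_p[\varpi]$. Taking logarithms and comparing with the previous paragraph I need the linear term to dominate: each summand $\frac{1}{k}(\varpi^m\varphi_\chi)^k$ with $k\geq2$ has ${\mathfrak p}$-valuation at least $km-(p-1)\,v_p(k)$, and for $m\geq2$ this is $\geq m+1$ both when $2\leq k\leq p-1$ (where it is at least $2m$) and when $k=p$ (where it is at least $p+1$), and it only grows for larger $k$; hence
$$ {\rm log}(1+\varpi^m\varphi_\chi)\equiv \varpi^m\varphi_\chi \bmod {\mathfrak p}^{\,m+1}. $$
Equating the two computations of the logarithm yields $\varpi^m\varphi_\chi\equiv e_m\,\varpi^m \bmod {\mathfrak p}^{\,m+1}$, that is $\varphi_\chi\equiv e_m \bmod (\varpi)$.

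Finally I would conclude. Because $1+e\,(\zeta-1)$ is a pseudo-unit, $p$-primarity coincides with being a local $p$th power (Lem.\,2.1, Th.\,2.2), so Lemma 3.16 says $(1+e\,(\zeta-1))^{e_\chi}$ is $p$-primary exactly when $\varphi_\chi\equiv0\bmod(\varpi)$. Since $e_m\in\Z_p$ and $\Z_p\cap(\varpi)=p\,\Z_p$, the congruence $\varphi_\chi\equiv e_m\bmod(\varpi)$ turns this into $e_m\equiv0\bmod(p)$, which is the claimed description of the admissible $m$. I expect the only genuine difficulty to be the $p$-adic bookkeeping: justifying the Galois-equivariance and $\Z_p$-linearity of ${\rm log}$ on principal units, and keeping the valuation estimate honest at $k=p$, where the denominator drops the valuation by $p-1$; the boundary case $m=p-1$ ($\chi=\chi_0$) is consistent, since the norm computation already made in Lemma 3.16 forces $e_{p-1}\equiv0\bmod(p)$, matching the fact that this component is unconditionally $p$-primary.
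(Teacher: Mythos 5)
Your argument is correct, and it is evidently the intended one: the paper's own ``proof'' of this corollary is literally ``Left to the reader,'' and your route --- Galois-equivariance and $\Z_p$-linearity of $\log$ on principal units, the eigenspace decomposition $e_{\omega^m}\varpi^i=\varpi^i$ or $0$ according as $i\equiv m \bmod (p-1)$ or not, the valuation estimate showing $\log(1+\varpi^m\varphi_\chi)\equiv\varpi^m\varphi_\chi\bmod{\mathfrak p}^{m+1}$ for $m\geq 2$, and then the criterion of the preceding lemma --- is exactly what the surrounding setup (Dwork parameter, Lemma 3.15) is designed for. The only slip is notational: the lemma you quote is Lemma 3.15, not 3.16; and your closing remark on $m=p-1$ correctly handles the one boundary case where the finite expansion $e_2\varpi^2+\ldots+e_{p-1}\varpi^{p-1}$ needs the norm computation to make $e_{p-1}\equiv 0\bmod (p)$ automatic.
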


\begin{proof} Left to the reader.\end{proof}

We see that the condition depends on a single congruence to 0
 modulo $\varpi$, whose probability may be $\frac{1}{p}$,
 giving another aspect of the rarity of such a condition for many 
 values of $m$ (at least $\frac{p-1}{2}$ from Mirimanoff${\,}'$s
 congruences).
 
\subsection{$p$-adic Gauss sums and Mirimanoff${\,}'\!$s polynomials}
 We use the context of the previous Subsection 3.3, especially the 
 Dwork uniformizing parameter $\varpi \in \Q_p(\zeta)$ such that $\varpi^{p-1} = -p$
 and $s_k (\varpi) = \omega(k)\, \varpi$ for $k=1,\ldots,p-1$.
 
 \smallskip
Let $\chi = \omega^m$, here indexed by $m \in\{ 0,\ldots,p-2 \}$.
 We note that (additively):
$$e_\chi \,.\, \zeta = \Frac{1}{p-1} \sm_{k=1}^{p-1} \chi^{-1}(k) 
\zeta^k = \Frac{-1}{p-1} \tau(\chi^{-1}). $$
Now put:
$$\zeta = \Frac{-1}{p-1}\big( u_0 + u_1\, \varpi+ \ldots + 
u_{p-2}\,\varpi^{p-2}\big) ,$$
where $u_k \in \Z_p$, with $u_0 \equiv 1 \bmod (p)$.
We know that $e_\chi \,.\,\varpi^{j} = 0$ if $j \not \equiv m$ 
modulo $(p-1)$ and $e_\chi \,.\,\varpi^{m} = \varpi^{m}$, so that
$e_\chi \,.\, \zeta =  \hbox{$\frac{-1}{p-1}$} u_m \,\varpi^{m}$
and $\tau(\chi^{-1}) =  u_m \,\varpi^{m}$,
for all $m\in \{0, \ldots, p-2\}$.\,\footnote{In these computations, we must 
write the unit character $\omega^0$ instead of $\omega^{p-1}$
because of the expression of $\zeta$ since $\varpi^{p-1} = -p$
and $\tau(\omega^{0}) = 1$.}
 Then, since for $\ov\tau(\chi) := \chi(-1)\,\tau(\chi)$, we have
$\tau(\chi^{-1})\,\ov\tau(\chi) = p$ for $\chi \ne \chi_0$, we obtain for $m \ne 0$:
$$u_m \,\varpi^{m} \, (-1)^{m} \, u_{p-1-m}\,\varpi^{p-1-m} = 
 (-1)^{m} \, u_m \, u_{p-1-m} \,(-p) = p, $$
 giving the  relation $u_m\, u_{p-1-m} = (-1)^{m+1}$, for $m \ne 0$.
 For the unit character, $\tau(\chi_0) = 1$ and we find $u_0 = 1$.

 \smallskip
We have obtained a classical result:

\begin{prop}  Let $\chi = \omega^m$, $m \in 
\{0,\ldots,p-2 \}$, 
 $\tau(\chi^{-1}) := -\sum_{k=1}^{p-1} \chi^{-1}(k)\, \zeta^k$
the Gauss sum of $\chi^{-1}$;  put $\ov\tau(\chi) := -\sum_{k=1}^{p-1} 
 \chi(k)\, \zeta^{-k} = \chi(-1)\,\tau(\chi)$.

\smallskip
 Then we have $\tau(\chi^{-1}) =  u_m\, \varpi^{m}$,
  $\ov \tau(\chi) = \chi(-1)\, u_{p-1-m} \,\varpi^{p-1-m}$, which 
  implies the   relation
$u_m\, u_{p-1-m} = (-1)^{m+1}$, for all $m\in \{1, \ldots, p-2\}$, and $u_0 = 1$.\end{prop}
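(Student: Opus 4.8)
The plan is to read off both $\tau(\chi^{-1})$ and $\ov\tau(\chi)$ from the single $\varpi$-expansion of $\zeta$ and then to substitute the two expressions into the classical norm relation for Gauss sums. The starting point is the identity already recorded above,
$$e_\chi\,.\,\zeta = \frac{-1}{p-1}\,\tau(\chi^{-1}),$$
which is just the idempotent applied to $\zeta$. The crux is to evaluate the same quantity from $\zeta = \frac{-1}{p-1}(u_0 + u_1\varpi + \ldots + u_{p-2}\varpi^{p-2})$: since $s_k(\varpi^j) = \omega^j(k)\,\varpi^j$, the operator $e_\chi$ acts on $\varpi^j$ with eigenvalue $\frac{1}{p-1}\sum_{k=1}^{p-1}\omega^{j-m}(k)$, which by orthogonality equals $1$ when $j\equiv m \bmod (p-1)$ and $0$ otherwise. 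As $j$ runs only over $\{0,\ldots,p-2\}$, this singles out the term $j=m$, so $e_\chi\,.\,\zeta = \frac{-1}{p-1}\,u_m\,\varpi^m$. Comparing the two expressions yields $\tau(\chi^{-1}) = u_m\,\varpi^m$ for every $m\in\{0,\ldots,p-2\}$; in particular the case $\chi=\chi_0$, where $\tau(\chi_0) = -\sum_{k=1}^{p-1}\zeta^k = 1$, forces $u_0 = 1$.

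Second, I would obtain the companion formula for $\ov\tau(\chi)$ by reusing the first one rather than repeating the computation. Writing $\chi = \omega^m$, the inverse character is $\chi^{-1} = \omega^{p-1-m}$, so applying the formula $\tau(\psi^{-1}) = u_{m'}\varpi^{m'}$ to $\psi := \omega^{p-1-m}$ (for which $\psi^{-1} = \omega^m = \chi$) gives $\tau(\chi) = u_{p-1-m}\,\varpi^{p-1-m}$. Multiplying by $\chi(-1)$ and recalling $\ov\tau(\chi) = \chi(-1)\,\tau(\chi)$ produces $\ov\tau(\chi) = \chi(-1)\,u_{p-1-m}\,\varpi^{p-1-m}$, which is exactly the second displayed equality of the statement.

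Finally, for $m\ne 0$ I would invoke the classical norm relation $\tau(\chi^{-1})\,\ov\tau(\chi) = p$, valid for $\chi\ne\chi_0$ (it follows by expanding the double sum and using $\sum_{t\ne 1}\chi(t) = -1$). Substituting the two expressions and using $\varpi^{p-1} = -p$ together with $\chi(-1) = \omega^m(-1) = (-1)^m$, the left side becomes $(-1)^m\,u_m\,u_{p-1-m}\,(-p)$, so the relation reads $(-1)^{m+1}\,u_m\,u_{p-1-m}\,p = p$, i.e.\ $u_m\,u_{p-1-m} = (-1)^{m+1}$, as claimed. None of the steps presents a genuine obstacle, since the substantive ingredients have all been assembled in the preceding discussion; the single point I would treat with care is the eigenvalue computation for $e_\chi$ on $\varpi^j$, where one must check that the restricted range $0\le j\le p-2$ meets the congruence class of $m$ modulo $p-1$ in exactly one index, so that the coefficient $u_m$ is isolated without ambiguity.
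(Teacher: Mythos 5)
Your proposal is correct and follows essentially the same route as the paper: read off $\tau(\chi^{-1})=u_m\varpi^m$ from the $\varpi$-expansion of $\zeta$ via the idempotent $e_\chi$, deduce the formula for $\ov\tau(\chi)$ by substituting $\chi^{-1}=\omega^{p-1-m}$, and conclude from $\tau(\chi^{-1})\,\ov\tau(\chi)=p$ together with $\varpi^{p-1}=-p$. The only difference is that you make explicit the orthogonality computation for the eigenvalue of $e_\chi$ on $\varpi^j$, which the paper simply asserts.
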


The modified Mirimanoff polynomial is, for $\chi^* = \omega\chi^{-1}$
(see Rem.\,3.6, (ii)):
$$ M_{\chi^*}(Z)  := \sm_{k=1}^{p-1}(\chi^*)^{-1}(k)\, Z^k, $$
and in the first case of FLT we must compute
$M_{\chi^*}(\frac{-x}{y})$ modulo $(p)$ for the usual $(x,y)$ depending of a 
solution and its permutations. 

\medskip
We suppose now that $\omega$ takes its values in
the field $F$ of $(p-1)$th roots of unity.
We consider the ideal ${\mathfrak p}_0\,\vert\, p$ of $F$ such that
$\omega(k) \equiv k \bmod {\mathfrak p}_0$ for all $k$.
All the computations take place in the compositum $F K$ in which
we denote by $\mathfrak P$ the (unique) prime ideal above
${\mathfrak p}_0$. 
 
\medskip
The condition of $p$-primarity of
$(a+b\,\zeta)^{e_\chi}$, for $\chi = \omega^m$, $m\in \{1, \ldots, p-1\}$,
$\chi \ne \omega$
(see Subsec.~3.1) becomes, in $FK$ with $\chi^*= \omega^{1-m}$:
$$M_{\chi^*} \Big(\Frac{-b}{a}\Big) \,.\, \tau(\chi^*)
\equiv 0 \bmod {\mathfrak P}^{p-1}, $$
where $M_{\chi^*} \big(\frac{-b}{a}\big) \in F$ and
$\tau(\chi^*) := -\sum_{k=1}^{p-1} \chi^*(k) \zeta^k \in FK$ is of
${\mathfrak P}$-valuation $m-1$, giving 
$M_{\chi^*} \big(\frac{-b}{a}\big) \equiv 0 
\bmod {\mathfrak p}_0$, and where we have:
$$M_{\chi^*}(\zeta) = - \tau((\chi^*)^{-1}). $$
The above properties of Gauss sums lead to the following, where
we only suppose that $a$ and $b$ are coprime integers.

\medskip
Put $(a + b \,\zeta)\,\Z[\zeta] = {\mathfrak C}_1$, thus $\zeta \equiv 
\frac{-a}{b} \bmod {\mathfrak C}_1$ seen in $FK$.
This gives:
$$-M_{\chi^*} \Big(\Frac{-a}{b}\Big) = -\sm_{k=1}^{p-1}(\chi^*)^{-1} (k) 
\,\Big(\Frac{-a}{b}\Big)^k \equiv \tau((\chi^*)^{-1})\bmod {\mathfrak C}_1 .$$ 
Thus in the same way (using $\zeta^{-1} \equiv 
\frac{-b}{a} \bmod {\mathfrak C}_1$):
$$-M_{(\chi^*)^{-1}} \Big(\Frac{-b}{a}\Big) \equiv \ov \tau(\chi^*)
\bmod {\mathfrak C}_1, $$
which yields, in $F$,
for any $\chi \ne \omega$ (i.e., $\chi^* \ne \chi_0$):
$$M_{\chi^*} \Big(\Frac{-a}{b}\Big) \,.\,M_{(\chi^*)^{-1}} \Big(\Frac{-b}{a}\Big) \equiv
\tau((\chi^*)^{-1})\,.\, \ov\tau(\chi^*) \equiv p \bmod {\mathfrak C}_1. $$
 Let $\sigma$ be an element of ${\rm Gal\,}(FK/K)$; for any $(p-1)$th 
 root of unity $\xi$,  $\sigma (\xi) = \xi^t$ with a suitable $t$ prime to $p-1$,
 so that the action of $\sigma$ on the powers of $\omega$ preserves 
 the relation $\varphi\,.\,\varphi^{-1} = \chi_0$
 between the characters, and preserves the ideal ${\mathfrak C}_1$
 which is in $K$; thus the expressions 
 $M_{\chi^*} \big(\frac{-a}{b}\big) \,.\,M_{(\chi^*)^{-1}}
 \big(\frac{-b}{a}\big)$ are conjugated by Galois
 so that the $p$-adic study\,\footnote{More precisely the knowledge of 
 the  ${\mathfrak p}'_0$-valuations, for all the prime ideals ${\mathfrak p}'_0$ of 
 $F$ above $p$.}
 of the products $M_{\omega^d} \big(\frac{-a}{b}\big) 
 \,.\,M_{\omega^{-d}} \big(\frac{-b}{a}\big)$, $d\,\vert\, p-1$,
 is sufficient.
 
\smallskip
The congruence modulo ${\mathfrak C}_1$ in $FK$
is now in $F$, thus  it is actually modulo the 
ideal $\No_{K/\Q} ({\mathfrak C}_1)$ seen in $F$. 
 Since it is the norm 
 of $a + b \,\zeta$, it is the homogeneous form in $a$, $b$:
 $$\Phi_p (a,b) := a^{p-1} - a^{p-2} b + \ldots - a\,b^{p-2} + b^{p-1}. $$
 Put $M_{\chi^*} \big(\frac{-a}{b}\big) \,.\,M_{(\chi^*)^{-1}} 
 \big(\frac{-b}{a}\big) - p 
 = \Phi_p (a,b) \,.\, \Frac{\Psi_\chi(a,b)}{a^{p-2}\, b^{p-2}}$, then
 $\Psi_\chi(a,b)$ is an homogeneous form of degree $p-3$.
 
 \smallskip\smallskip
 We have, for any character $\varphi$, $M_{\varphi}(Z) = \varphi(-1)\,Z^p\,M_{\varphi}(Z^{-1})$,
 which gives
$M_{\varphi}(Z) \,M_{\varphi^{-1}}(Z^{-1}) = M_{\varphi}(Z^{-1})\,
 M_{\varphi^{-1}}(Z) $, hence proves the symmetry 
 between $a$ and $b$, and the invariance of
 $M_{\chi^*} \big(\frac{-a}{b}\big) \,.\,M_{(\chi^*)^{-1}}
 \big(\frac{-b}{a}\big)$ by complex conjugation
  in $F/\Q$. So these  expressions have coefficients
 in the maximal real subfield $F^+$ of $F$.

 \smallskip
To summarize, we have obtained:
 
\begin{prop} Let $x$, $y$ be  indeterminates 
and put $M_{\varphi}(Z) :=  \sum_{k=1}^{p-1} \varphi^{-1}(k) 
\,Z^k$ for any character $\varphi$.
Then for all $\chi\ne \omega$, we have the relation:
$$M_{\chi^*} \Big(\Frac{-x}{y}\Big) \,.\, M_{(\chi^*)^{-1}} \Big(\Frac{-y}{x}\Big) =  p 
+ \Phi_p (x,y) \,.\, \Frac{\Psi_\chi(x,y)}{x^{p-2}\, y^{p-2}} \,, $$
where $\Psi_\chi(x,y)$ is a symmetrical homogeneous form of degree $p-3$ with 
coefficients in $F^+$.\end{prop}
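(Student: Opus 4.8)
The plan is to recast the asserted identity, which a priori lives in the field of rational functions $F(x,y)$, as a polynomial divisibility by $\Phi_p$, and then to verify that divisibility by evaluating at the roots of $\Phi_p$, where the Gauss-sum relations of Proposition 3.17 apply. First I would clear denominators. Writing $M_{\chi^*}\big(\frac{-x}{y}\big)=P(x,y)/y^{p-1}$ and $M_{(\chi^*)^{-1}}\big(\frac{-y}{x}\big)=Q(x,y)/x^{p-1}$, the numerators $P=\sum_{k=1}^{p-1}(\chi^*)^{-1}(k)(-x)^k y^{p-1-k}$ and $Q=\sum_{k=1}^{p-1}\chi^*(k)(-y)^k x^{p-1-k}$ are homogeneous of degree $p-1$ with $x\mid P$ and $y\mid Q$. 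Hence $PQ=xy\,R$ with $R$ homogeneous of degree $2p-4$, and the left-hand side equals $R/(x^{p-2}y^{p-2})$. The claim is therefore equivalent to the divisibility $\Phi_p(x,y)\mid N(x,y)$ in $F[x,y]$, where $N:=R-p\,x^{p-2}y^{p-2}$ is homogeneous of degree $2p-4$; the quotient $\Psi_\chi:=N/\Phi_p$ then automatically has degree $(2p-4)-(p-1)=p-3$.

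Next I would prove this divisibility at the level of roots. From $a^p+b^p=(a+b)\No_{K/\Q}(a+b\,\zeta)$ one has $\Phi_p(x,y)=\prod_{j=1}^{p-1}(x+\zeta^j y)$, and since $K=\Q(\zeta)$ and $F=\Q(\mu_{p-1})$ are linearly disjoint over $\Q$, $\Phi_p$ is irreducible over $F$. It thus suffices to check that $N$ vanishes at a single root; I would in fact check all of them, $x=-\zeta^j$, $y=1$, where $\frac{-x}{y}=\zeta^j$ and $\frac{-y}{x}=\zeta^{-j}$. The required vanishing reduces to $M_{\chi^*}(\zeta^j)\,M_{(\chi^*)^{-1}}(\zeta^{-j})=p$. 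The substitution $l=jk$ gives the twist formulas $M_{\chi^*}(\zeta^j)=\chi^*(j)\,M_{\chi^*}(\zeta)=-\chi^*(j)\,\tau((\chi^*)^{-1})$ and $M_{(\chi^*)^{-1}}(\zeta^{-j})=-\chi^*(j)^{-1}\,\overline{\tau}(\chi^*)$; the twists $\chi^*(j)^{\pm1}$ cancel, leaving $\tau((\chi^*)^{-1})\,\overline{\tau}(\chi^*)=p$, valid because $\chi^*\ne\chi_0$ (that is, $\chi\ne\omega$). Hence $N(-\zeta^j,1)=p\,(-\zeta^j)^{p-2}-p\,(-\zeta^j)^{p-2}=0$ for every $j$.

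It remains to collect the consequences. Since $\Phi_p\in\Q[x,y]$ is monic in $x$ and $N\in F[x,y]$, polynomial division keeps the quotient in $F[x,y]$, so $\Psi_\chi\in F[x,y]$ is homogeneous of degree $p-3$ with zero remainder. Finally the functional equation $M_\varphi(Z)=\varphi(-1)\,Z^p\,M_\varphi(Z^{-1})$ noted above forces $M_{\chi^*}\big(\frac{-x}{y}\big)\,M_{(\chi^*)^{-1}}\big(\frac{-y}{x}\big)$ to be symmetric in $x,y$ and invariant under complex conjugation of $F/\Q$, whence $\Psi_\chi$ has coefficients in $F^+$, as claimed.

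The main obstacle is the passage from the arithmetic congruence modulo ${\mathfrak C}_1$ (obtained earlier via $\zeta\equiv\frac{-a}{b}$) to an \emph{exact} polynomial divisibility: what makes this work is that the value of the product at each conjugate root is exactly $p$, not merely $p$ modulo something, and is independent of $j$ thanks to the clean cancellation of the $\chi^*(j)$-twist against $\chi^*(j)^{-1}$. The only other point needing care is the denominator bookkeeping ($x\mid P$, $y\mid Q$, so the a priori denominator $x^{p-1}y^{p-1}$ drops to $x^{p-2}y^{p-2}$), which is what pins the degree of $\Psi_\chi$ to exactly $p-3$.
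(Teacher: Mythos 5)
Your proof is correct, and at its core it runs on the same engine as the paper's: the identification $M_{\chi^*}(\zeta) = -\tau((\chi^*)^{-1})$ together with the Gauss-sum product $\tau((\chi^*)^{-1})\,\ov\tau(\chi^*) = p$, valid precisely because $\chi^*\ne\chi_0$, i.e.\ $\chi\ne\omega$. Where you genuinely differ is in how that single evaluation is globalized into the displayed identity. The paper specializes $x,y$ to coprime integers $a,b$, reads $\zeta\equiv -a/b \bmod {\mathfrak C}_1$ off the ideal ${\mathfrak C}_1=(a+b\,\zeta)\,\Z[\zeta]$, obtains $M_{\chi^*}(-a/b)\,M_{(\chi^*)^{-1}}(-b/a)\equiv p \bmod {\mathfrak C}_1$, descends to $F$ modulo $\No_{K/\Q}({\mathfrak C}_1)=\Phi_p(a,b)$, and then simply \emph{defines} $\Psi_\chi$ by the stated equation, asserting it is a form of degree $p-3$; the passage from a family of numerical congruences to an exact polynomial divisibility is left implicit. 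You instead keep $x,y$ as indeterminates throughout, homogenize (your bookkeeping $x\mid P$, $y\mid Q$ correctly pins the denominator at $x^{p-2}y^{p-2}$, hence $\deg\Psi_\chi=p-3$), and verify $\Phi_p\mid N$ by evaluating at every root $x=-\zeta^j y$, where the twist $M_{\chi^*}(\zeta^j)=\chi^*(j)\,M_{\chi^*}(\zeta)$ cancels against its inverse in the second factor; irreducibility of $\Phi_p$ over $F$ (linear disjointness of $F$ and $K$) keeps the quotient in $F[x,y]$ and would even let a single root suffice. The symmetry and $F^+$-rationality via $M_\varphi(Z)=\varphi(-1)\,Z^p\,M_\varphi(Z^{-1})$ are handled identically in both. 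In short: same key lemma, but your evaluation-at-the-roots formulation supplies the polynomial-identity justification that the paper only sketches, which is a worthwhile gain in rigor for a statement asserted for indeterminates.
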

  
Now we suppose that $(x, y, z)$ is a solution in the first case
of the Fermat equation.
Recall that the condition of $p$-primarity of $(x+y\,\zeta)^{e_\chi}$ which was 
modulo ${\mathfrak P}^{p-1}$ in $FK$  is now, because of the total 
ramification in $FK/F$,  modulo the prime ideal ${\mathfrak p}_0$ of $F$ under
${\mathfrak P}$, and is $M_{\chi^*} \big(\frac{-y}{x}\big)
\equiv 0 \bmod {\mathfrak p}_0$. 

\smallskip
From the above we obtain that:
$$M_{\chi^*} \Big(\Frac{-x}{y}\Big) \,.\,
M_{(\chi^*)^{-1}} \Big(\Frac{-y}{x}\Big) \equiv 0 \bmod {\mathfrak p}_0$$
is  equivalent to $\Psi_\chi(x,y) \equiv 0 \bmod {\mathfrak p}_0$.

\smallskip
For instance, for $p=5$ we have (noting that $F^+ = \Q$
and ${\mathfrak p}_0 = (5)$):
 \begin{eqnarray*}
M_{\omega^{-1}} \Big(\Frac{-x}{y}\Big) \,.\, M_{{\omega}} 
\Big(\Frac{-y}{x}\Big) &=&  5
+ \Phi_5 (x, y) \,.\, \Frac{x^2 + x\, y + y^2}{x^{3}\, y^{3}}, \\
M_{\omega^{2}} \Big(\Frac{-x}{y}\Big) \,.\, M_{\omega^{2}}
\Big(\Frac{-y}{x}\Big) &=&  5
- \Phi_5 (x, y) \,.\, \Frac{x^2 +3\, x\, y + y^2}{x^{3}\, y^{3}}.
\end{eqnarray*}
Of course these forms $\Psi$ do not represent 0 in $\F_5$.
 
\begin{rema} (i) Notice that these congruences have 
nothing to do with Mirimanoff${}'$s congruences despite the fact that
as soon as one of the factors $M_{\chi^*} \big(\frac{-x}{y}\big)$,
$M_{(\chi^*)^{-1}} \big(\frac{-y}{x}\big)$ is congruent to 0
modulo ${\mathfrak p}_0$, this is the case of the expression $\Psi_\chi(x,y)$
and reciprocally.

\smallskip
More precisely, $M_{\chi^*} \big(\frac{-x}{y}\big) \equiv 0 
\bmod {\mathfrak p}_0$ is equivalent to $(y+x\,\zeta)^{e_\chi}$
$p$-primary,
hence to $(x+y\,\zeta)^{e_\chi}$ $p$-primary (since $\chi \ne \omega$), 
thus to $M_{\chi^*} \big(\frac{-y}{x}\big) \equiv 0 
\bmod {\mathfrak p}_0$.

\smallskip
Similarly,
 $M_{(\chi^*)^{-1}} \big(\frac{-y}{x}\big)  \equiv 0 
\bmod {\mathfrak p}_0$ is equivalent to the $p$-primarity of
the two pseudo-units
$(x+y\,\zeta)^{e_{\wt \chi}}$ and $(y+x\,\zeta)^{e_{\wt \chi}}$, 
then to $M_{(\chi^*)^{-1}} \big(\frac{-x}{y}\big)  \equiv 0 
\bmod {\mathfrak p}_0$,
where $\wt \chi:= \omega^2\,\chi^{-1}$,
which may have some interest (see in Subsec. 2.5, (b), the reflection 
between $R_{2,\chi}$ and ${\mathcal T}_{{\omega^2 \chi}^{-1}}$). 

\smallskip
(ii) It would be interesting to perform the same study 
with the Davenport--Hasse relations between Gauss sums,
for two characters:
$$\prd_{\chi,\,\chi^d=\chi_0}\tau(\chi\,.\,\psi) = \psi^{-d}(d)
\,.\,\tau(\psi^d)\,.\prd_{\chi,\,\chi^d=\chi_0}\tau(\chi),$$
for any divisor $d$ of $p-1$,
and with the Jacobi sums given by the relation:
$$\Frac{\tau (\chi)\,\tau (\psi)}{\tau (\chi\,\psi)}
= -\sm_{k=1}^{p-1} \chi(k) \,\psi(1-k). $$ \end{rema}

\subsection{Mirimanoff${\,}'\!$s sums}
We still consider the context of the previous Subsection 3.4,
for which $\omega$ takes its values in
the field $F$ of $(p-1)$th roots of unity.
We fix the prime ideal ${\mathfrak p}_0$ of $F$ above $p$ in the 
following way: fix a primitive $(p-1)$th root of unity $\xi_0 \in F$ and a
primitive $(p-1)$th root   $r_0 \in \Z$ modulo $p$; then we decrete that
$\xi_0 \equiv r_0 \bmod {\mathfrak p}_0$.

\smallskip
Since for any character $\varphi$ of $g := {\rm Gal\,}(K/\Q)$,
$M_\varphi(Z) = \sum_{k=1}^{p-1}\varphi^{-1}(k)\, Z^k$, if we put,
 for a solution $(x, y, z)$ in the first case of  Fermat${}'$s 
equation:
$$\Frac{-y}{x} \equiv r_0^t  \equiv \xi_0^t =:\xi \bmod {\mathfrak p}_0, $$
we have in $F$ the congruence:
$$M_\varphi\Big(\Frac{-y}{x}\Big) \equiv M_\varphi(\xi) \bmod {\mathfrak 
p}_0; $$
hence the congruences $M_\varphi\big(\frac{-y}{x}\big) \equiv 0 \bmod 
(p)$ in $\Q_p$ and $M_\varphi(\xi) \equiv 0 \bmod {\mathfrak p}_0$ 
in $F$ are equivalent.

\smallskip
We propose to call the sums of roots of unity:
$$\mu_\varphi(\xi) := 
\sm_{k=1}^{p-1}\varphi^{-1}(k)\, \xi^k \,\in \, F ,$$
the Mirimanoff sums attached to the character $\varphi$ and the 
$(p-1)$th root of unity~$\xi$.

\smallskip
It is clear that the algebraic numbers:
$$\mu_\varphi(\xi)\,.\,\mu_{\varphi^*}(\xi), \ \varphi \ne \chi_0,\, 
\omega \ \,
{\rm and}\ \  \mu_\varphi(\xi)\,.\,\mu_{\varphi^{-1}}(\xi^{-1}), \ \varphi \ne \chi_0, $$
give the easy way to study the congruences of Mirimanoff and the
congruences given in Proposition 3.18.

Unfortunately, the root $\xi$ is uneffective and the properties of 
the sums $\mu_\varphi(\xi)$ depend largely of the order of $\xi$
(i.e., the order of $\frac{-x}{y}$ modulo $p$); hence we must envisage 
all the possibilities.

\medskip
Warning: in the factor $\varphi^{-1}(k)$, $k$ is considered modulo 
$p$, but in the factor $\xi^k$, $k$ is considered modulo 
$p-1$, under the condition that $k \in \{1,\ldots,p-1\}$.

\smallskip
In a more numerical setting, put $\varphi = \omega^h$ and $\xi = 
\xi_0^t$; then, writing $k \equiv  r_0^j \bmod (p)$,
we get: 
 \begin{eqnarray*}
\mu_\varphi(\xi) =: \mu_h(t) &=&
\sm_{k=1}^{p-1}\omega^{-h}(k)\,\xi_0^{\,t\,k} =
\sm_{j=1}^{p-1}\xi_0^{-h\,j}\,\xi_0^{\,t\,[ r_0^j]_p} \\
&=& \sm_{j=1}^{p-1}\xi_0^{-h\,j\,\, +\,\, t\,[ r_0^j]_p} ,\,\  
h, \,t\in \{1,\ldots,p-1\},
\end{eqnarray*}
where $[r_0^j]_p$ is the unique residue modulo $p$ of $r_0^j$ in the 
set $\{1,\ldots,p-1\}$.

\smallskip
Then let $\Phi_{p-1}$ be the $(p-1)$th cyclotomic polynomial, of degree
$\nu := \phi(p-1)$; after reduction modulo $\Phi_{p-1}$, we obtain:
$\mu_h(t) = q_0 + q_1 \xi_0 + \ldots + q_{\nu-1} \xi_0^{\nu-1}$, $q_i 
\in \Z$,
which can be studied modulo ${\mathfrak p}_0$ in an easy way.

\smallskip\smallskip
Naturally, these sums are completely analogous to Mirimanoff${}'$s 
polynomials specialized at suitable classes modulo $p$, but we hope that
the formulation in terms of sums of roots of unity is likely of a better understanding.

\subsection{Wieferich${}'$s criterion: a local consequence of the reciprocity law}
As indicated in Ribenboim${}'$s book,
the Wieferich criterion may be deduced from the law of reciprocity (this has been done
first by Furtw\"angler from Eisenstein${}'$s reciprocity law [R, IX.3]).
For this purpose, an explicit formula of Hasse may also be used [R, IX.5].

\medskip
Here we propose a more basic  proof
using the ${\mathfrak p}$-conductor of a Kummer extension
in the following way, where 
$\big(\frac{\bullet}{\bullet}\big)_{\!p}$ is the $p$th power residue 
symbol, with values in $\langle \,\zeta\, \rangle$.
 
\begin{theo}[Wieferich${}'$s criterion]  Let $\ell$ be a prime 
number, $\ell \ne p$, and suppose that $x + y \,\zeta$ is a pseudo-unit
(i.e., $(x + y \,\zeta)$ is the $p$th 
power of an ideal of $K$ prime to ${\mathfrak p}$).

(i) Then $\Big(\Frac{\zeta^x \, y + \zeta^{-y} \, x}{\ell}\Big)_{\!p} = 1$. 

(ii) If $\ell\,\vert\, y$ with $p \notdiv y$ and if $(x, y, z)$ is a 
solution of Fermat${}'$s equation\,\footnote{So that $x+y = z_0^p$ as usual; the 
second case of FLT being equivalent here to $p\,\vert\,x$.}\!\!,
then $\ell^{\,p-1} \equiv 1 \bmod (p^2)$.   \end{theo}

\begin{proof} The expression of $\alpha := \zeta^x \, y + \zeta^{-y} \, x$ is such 
that $\alpha$ is still a pseudo-unit, and $\alpha \equiv 
x+y \equiv (x+y)^p \bmod (1-\zeta)^2$.

The general law of reciprocity (see e.g. [Gr1, Th.\,II.7.4.4]) yields to:
$$\Big(\Frac{\alpha}{\ell}\Big)_{\!p}\,\Big(\Frac{\ell}{\alpha}\Big)^{-1}_p 
= (\ell, \alpha)_{\mathfrak p}$$
where $(\bullet, \bullet)_{\mathfrak p}$ is the Hilbert${}'$s symbol at the 
place ${\mathfrak p}$. This symbol is equal to 1 if and only if
$\ell$ is a local norm in the Kummer extension 
$K_{\mathfrak p}(\sqrt[p]\alpha\,)/K_{\mathfrak p}$; the conductor
of this extension divides ${\mathfrak p}^{p-1}$ since $\alpha$
is congruent to a $p$th power modulo ${\mathfrak p}^2$
(see the general conductor formula in [Gr1, Prop.\,II.1.6.3]).
Since $\ell^{\,p-1} \equiv 1 \bmod (p)$ the normic condition is satisfied for $\ell$.

\smallskip
But the symbol $\big(\frac{\ell}{\alpha}\big)^{-1}_p$ is tivial 
since $(\alpha)$ is the $p$th power of an ideal; thus:
$$\Big(\Frac{\zeta^x \,y + \zeta^{-y} \, x}{\ell}\Big)_{\!p} = 1. $$
If $\ell\,\vert\, y$,
we have $\zeta^x \,y + \zeta^{-y}\,x \equiv \zeta^{-y}\,x 
\  \bmod (\ell)$ and $1 = \big(\frac{\zeta^{-y}\,x}{\ell}\big)_{\!p} =
\big(\frac{\zeta}{\ell}\big)^{-y}_p\,\big(\frac{x}{\ell}\big)_{\!p}$; but 
$x =  z_0^p - y
\equiv  z_0^p \bmod (\ell)$ giving $\big(\frac{x}{\ell}\big)_p = 1$ and
$\big(\frac{\zeta}{\ell}\big)_p = 1$ since $p\notdiv y$.

If $(\ell) = {\mathfrak l}_1\ldots {\mathfrak l}_d$ in $K$,
then $\prod_{i=1}^d \big(\frac{\zeta}{{\mathfrak l}_i}\big) = 1$, but 
we have $ \big(\frac{\zeta}{{\mathfrak l}_1}\big)^k = 
s_k \,\big(\frac{\zeta}{{\mathfrak l}_1}\big)=
\big(\frac{\zeta^k}{{\mathfrak l}_k}\big) =
 \big(\frac{\zeta}{{\mathfrak l}_k}\big)^k$,
 so that $\big(\frac{\zeta}{{\mathfrak l}_k}\big)$ does not depend on 
 $k$, giving $\big(\frac{\zeta}{{\mathfrak l}_1}\big) =1$; thus the 
 multiplicative group of the residue field of ${\mathfrak l}_1$
 contains an element of order $p^2$, proving the point (ii) of the 
 theorem. \end{proof}
 
Then the discovery of Wieferich${}'$s criteria consists in proving that small 
prime numbers $\ell$ (e.g. $\ell = 2$) divide $a\,b\,c$ (see [GM], 
[Th2], for a study of Fermat${}'$s quotients in relation with FLT);
in the second case, the hypothesis $\ell \,\vert\, y$, $p\notdiv y$ may be inaccurate,
so the  Wieferich criterion is uneffective in the second case.

\smallskip
It is clear that the prime numbers $\ell\equiv 1 \bmod (p)$, such that Fermat${}'$s equation
$u^p + v^p+1 = 0$ has no nontrivial
solutions in the finite field $\F_\ell$, are 
divisors of $a \,b\, c$ (where $(a, b, c)$ is a global solution in any case of 
Fermat${}'$s equation); then experimental computations show that many such primes do 
exist. One may conjecture that their number tends to infinity with 
$p$, which gives many uneffective Wieferich${}'$s criteria.

\smallskip
In this direction we have the following interesting approach.

\subsection{Wendt${\,}'\!$s criterion: a non modulo $p$ local--global result}
Let $\ell$ be a prime number of the form $1 + n \,p$, $n \geq 2$,
and let $\mathfrak l$ be an ideal above $\ell$ in~$K$. 
We consider the algebraic number
$\theta_n := \prod_{i,\,j = 1}^n (\xi_i + \xi_j + 1)$,
where the $\xi_k$, $k=1,\ldots,n$, are the  $n$th roots of unity.

\smallskip
We have $\theta_n \in \Z \Sauf \{ 0 \}$; this number has 
been used for instance in the following papers : [LS] (for a similar purpose 
as us) and [A-HB], [F] to prove that the first case of FLT holds for 
infinitely many primes $p$.

\smallskip
See [R, IV.4] for its explicit computation via Wendt${}'$s determinant.
If $\ell \notdiv \theta_n$ this means that Fermat${}'$s equation in the 
residue field $\F_{\ell}$ of $\mathfrak l$ has no nontrivial 
solutions; thus if $a$, $b$, $c$ is a solution in $\Z$ of
Fermat${}'$s equation,
necessarily $\ell$ divides one of these numbers, say $\ell \,\vert\,c$.

\smallskip
Now we state the following result (in the spirit of  Germain${}'$s 
theorem).

\begin{theo}[Wendt${}'$s criterion]  Let $\ell = 1 + n \,p$ be
a prime number which 
does not divide the natural integer $\theta_n$. Moreover, we suppose 
that $p$ is not a $p$th power modulo~$\ell$.

\smallskip\noindent
Then the first case of FLT holds for $p$.\end{theo}

\begin{proof} Suppose that $\ell \,\vert \,c$ for a solution
in the first case of  Fermat${}'$s equation.
We have $a+b = c_0^p$, $\No_{K/\Q} (a+b\,\zeta) = c_1^p$
with $-c = c_0 \, c_1$ (see Rem.\,1.4, (i)).

If $\ell \,\vert \, c_0$
then $b \equiv -a \bmod (\ell)$, giving:
$$c_1^p = \No_{K/\Q} (a+b\,\zeta) \equiv a^{p-1} \prd_{i=1}^{p-1}(1-\zeta^i)
= a^{p-1}\, p \ \bmod (\ell)$$
(a contradiction since $a+c \equiv a \equiv b_0^p \bmod (\ell)$, 
giving that $p$ is a local $p$th power at $\ell$).

\smallskip
So $\ell \,\vert \, c_1$; from Lemma 1.2,  $\ell \,\notdiv \, c_0$
giving, from $a + b = c_{0}^p$,
$a+c = b_0^p$, and $b+c = a_0^p$, the relation
$0 = a + b - c_{0}^p \equiv  b_0^p +  a_0^p + (- c_{0})^p \bmod (\ell)$ which 
defines a non trivial solution in $\F_\ell$ (absurd).

The conclusion of the theorem is the same if we replace the hypothesis
``$\,p$ is not a $p$th power modulo $\ell\,$'',
by ``\,$p\notdiv n$\,'' since in that case,
Wieferich${}'$s criterion is not satisfied for $\ell$.\end{proof}


{\bf Appendix. Wieferich${}'$s criterion without
reciprocity law} (from a proof rediscovered by {\sc \small  Roland Qu\^eme).}\,\footnote{
Adress: Roland Qu\^eme, 13 Avenue du ch\^ateau d'eau, 31490 Brax,
Url: http://roland.queme.free.fr/, email: roland.queme@wanadoo.fr}

\medskip\smallskip
We use the same notations as in Subsection 3.6.
See also Notations 2.7.

Let $\ell \ne p$ be a prime number.
We suppose that 
by choosing suitable $x$, $y$ among $a$, $b$, $c$, we have
$\ell\, \vert\, y$ and $p \notdiv x+y$ in the writing 
$(x+y\,\zeta)\,\Z[\zeta] = {\mathfrak z}^p_1$ (valid in any case
of Fermat${}'$s equation). Consider $e_{\omega}\in \Z[g]$ modulo $p$.

\smallskip
We know that $\cl({\mathfrak z}_1)^{e_{\omega}} = 1$
(another application of the reflection theorem; see 
 [Gr1, II.5.4.6.3]), so that
$(x+y\,\zeta)^{e_{\omega}} = \varepsilon_\omega\, \delta_\omega^p$, 
$\varepsilon_\omega \in E_\omega = \langle\zeta\rangle$,
$\delta_\omega \in K^\times$; hence
$\varepsilon_\omega = \zeta^h$ for $h \geq 0$.

Thus this yields:
 $$(x+y\,\zeta)^{e_{\omega}}\,\in\, \zeta^h\,.\,K^{\times p}, $$

hence  the relation
 $\big(\frac{(x+y\,\zeta)^{e_{\omega}}}{\ell}\big)_p \!\!=
 \big(\frac{\zeta}{\ell}\big)^h_p$ where
$(x+y\,\zeta)^{e_{\omega}} \equiv x^{e_{\omega}}$ (a $p$th power)
modulo $\ell$, proving that:
$$\Big(\Frac{\zeta}{\ell}\Big)^h_p = 1.$$

But $(x+y\,\zeta)^{e_{\omega}}\,\in\, \zeta^h\,.\,K^{\times p}$ is
equivalent to 
$(1+\hbox{$\frac{y}{x+y}$}(\zeta-1))^{e_{\omega}}\,\in\, \zeta^h\,.\,K^{\times p}$;
 using Remark 3.4 (\,$(1+\frac{y}{x+y}(\zeta-1))^{e_{\omega}}\equiv
 1+\frac{y}{x+y}(\zeta-1) \bmod {\mathfrak p}^2$) we get immediately
 $h \equiv \frac{y}{x+y} \bmod (p)$.
 
 \smallskip
 If moreover $y \not \equiv 0 \bmod (p)$ (e.g. first case of FLT, 
 or second case with $x\equiv 0 \mod (p )$)
 we obtain the result on Wieferich${}'$s criterion in the same way as 
 in Subsection 3.6, without any use of the reciprocity law.

 
\section{Conclusion}

\medskip\smallskip
We have shown that much of the classical literature on FLT has been concerned with
very basic facts of class field theory, often rediscovered by means of painful
congruential computations; but recall that class field 
theory is essentially algebraic as soon as, for instance, \v Cebotarev${}'$s density 
theorem is not used (among other analytic tools),
and that, algebrically, all is ``\,possible\,''.
So it appears that this approach
is relatively poor, despite the power of class field theory to enunciate
technical properties.

\smallskip
Moreover, most of the arguments are local, especially {\it local at 
$p$}.\,\footnote{Recall that a pseudo-unit $\alpha$ of $K$ is in 
$K^{\times p}$ if and only if $\alpha \in K_{\mathfrak q}^{\times p}$
for all ${\mathfrak q} \in \{{\mathfrak p}, {\mathfrak l}_1, \ldots
{\mathfrak l}_r\}$, where the prime ideals ${\mathfrak l}_1, \ldots
{\mathfrak l}_r$ generate the $p$-class group of $K$ (see [Gr1, 
Exer. II.6.3.8]); but this criterion is not effective. }

\smallskip
The fact that the relative class group takes place in these studies 
does not change our point of view since it is utilized without serious
analytic arguments (except the unusable upperbound
${\rm log}\,(h^-) < \hbox{$\frac{p}{4}$} {\rm log}(p)$ and the ingenious 
but elementary derivation technic of  Eichler). Moreover
the analytic class number formula for the relative class group is not 
really analytic since it is, roughly speaking, equivalent to
Stickelberger${}'$s theorem and is, in some sense, algebraic
(the main theorem on cyclotomic 
fields gives a better knowledge of the class field theory aspects, but 
it is not really necessary).

\smallskip
It is likely that the most serious {\it cyclotomic\,} approaches
are the study of ``\,Mirima\-noff${}'$s sums\,'', since at least
half of them must be zero modulo ${\mathfrak p}_0$, and 
that of Wendt${}'$s criterion since it is connected with the theory 
of prime numbers; but all this only concerns  the first 
case of FLT, which is unnatural.

\smallskip
Still in the first case, from the well-known class field theory 
exact sequence of $\Z_p$-modules:
$$1 \too U/\ov E \tooo{\rm Gal}(H_{\Pl_p}/K) \tooo \Cl \too 1, $$
where $H_{\Pl_p}$ is the maximal abelian $p$-ramified 
pro-$p$-extension of $K$, $U$ the group of principal units of
$K_{\mathfrak p}$, $\ov E$ the closure in $U$ of the group of global 
units $\varepsilon\equiv 1 \bmod {\mathfrak p}$, we get for any {\it even}
character $\chi \ne \chi_0$:
$$1 \too U_\chi/\ov E_\chi \tooo {\mathcal T}_\chi \tooo \Cl_\chi \too 1, $$
where all groups are $p$-torsion groups since $\chi \ne \chi_0$ is 
even.
For $p$ large enough, the result of Kurihara--Soul\'e is 
$\,\Cl_{\omega^{p-3}} = 1$; suppose that it is possible to extend it to
${\mathcal T}_{\omega^{p-3}} = 1$ (taking ``\,$p$-ramification\,'' instead of 
``\,nonramification\,''), then $\ov E_{\omega^{p-3}}= U_{\omega^{p-3}}$
which means that the fundamental ${\omega^{p-3}}$-unit 
$\varepsilon_{\omega^{p-3}}$ is not a local $p$th power and that the 
fundamental ${\omega^{p-3}}$-cyclotomic unit $\eta_{\omega^{p-3}}$
(equal to $\varepsilon_{\omega^{p-3}}$ since $\Cl_{\omega^{p-3}} = 1$)
is not a local $p$th power, which is equivalent to $b_{\chi^*} =
b_{\omega^{3}} \not \equiv 0 \bmod (p)$, in other words to
$B_{p-3}\not \equiv 0 \bmod (p)$, which would contradict the first 
case of FLT (at least for $p$ large enough).

\medskip
We believe more in the possibility of a {\it nonalgebraic} study of the 
radical generated by $\zeta$, $1-\zeta$, $a+b\,\zeta$, $b+c\,\zeta$, $c+a\,\zeta$ and 
their conjugates, which would be independent of the considered case of FLT, 
and which is not equivalent to a general study of the 
group ${}_p\Cl$ because as a matter of fact
we are concerned with very specific $p$-classes,
the same remark being valid for the 
utilization of other arithmetical invariants of $K$.
As the Referee mentions,
all these invariants are isomorphic or dual to adequate Tate twists
of the cohomology group ${\rm H}^2 ({\mathcal G}, \Z/p\Z)$
(where ${\mathcal G}$ is the Galois group of the maximal $p$-ramified
pro-$p$-extension of $K$) which relativizes the interest,
but we don${}'$t know if the use of the 
pseudo-units $x+y\,\zeta$ in these contexts
leads, in practice, to the same ``\,numerical\,'' criteria and to the 
same diophantine approach. 

\smallskip
It is indeed surprising that, to our knowledge, there is no
important diophantine results on 
the mixed radical $W$, using simultaneously $a, b, c$, and possibly 
the cyclotomic numbers,
which constitutes a particular case of the study of the polynomial 
identity, in the polynomial ring $\Z[X]$:
$$\prd_{i = 1}^n (u_i + v_i\,X^{d_i})^{\lambda_i} =
H(X)^p + B(X)\,(X^p - 1),\ \ 0\leq d_i , \,\lambda_i \leq p-1 . $$

\end{document}